\newcommand*{\mailto}[1]{\href{mailto:#1}{\nolinkurl{#1}}}
\newcommand{\R}{{\mathbb{R}}}
\newcommand{\one}{\chi}
\newcommand*{\Lcdot}{\raisebox{-0.25ex}{\scalebox{1.1}{$\cdot$}}}
\newtheorem{theorem}{Theorem}[section]
\newtheorem{lemma}{Lemma}[section]
\newtheorem{proposition}{Proposition}[section]
\newtheorem{corollary}{Corollary}[section]
\newtheorem{remark}{Remark}[section]
\def\be#1{\begin{equation}\label{#1}}
\def\ee{\end{equation}}
\def\bea{\begin{eqnarray}}
\def\eea{\end{eqnarray}}
\def\bth#1{\begin{theorem}\label{#1}}
\def\eth{\end{theorem}}
\def\brem#1{\begin{remark}\label{#1}}
\def\erem{\end{remark}}
\def\blem#1{\begin{lemma}\label{#1}}
\def\elem{\end{lemma}}
\def\beas{\begin{eqnarray*}}
\def\eeas{\end{eqnarray*}}
\def\barr{\begin{array}}
\def\earr{\end{array}}
\def\bdm{\begin{displaymath}}
\def\edm{\end{displaymath}}
\def\bcor#1{\begin{corollary}\label{#1}}
\def\ecor{\end{corollary}}
\def\cW{\mathcal{W}}
\def\div{\mathrm{div}}
\def\nn{\nonumber}
\numberwithin{equation}{section}
\begin{document} 
\title[Shape optimization of a focusing acoustic lens]{\vspace{-2.0cm}Sensitivity analysis for shape optimization of a focusing acoustic lens in lithotripsy}

\author[V. Nikoli\' c]{Vanja Nikoli\' c}
\address{Insitute of Mathematics\\ University of Klagenfurt\\
Universit\"atsstrasse 65-57\\ 9020 Klagenfurt am W\"orthersee\\ Austria}
\email{vanja.nikolic@aau.at}

\author[B. Kaltenbacher]{Barbara Kaltenbacher}
\address{Insitute of Mathematics\\ University of Klagenfurt\\
Universit\"atsstrasse 65-57\\ 9020 Klagenfurt am W\"orthersee\\ Austria}
\email{barbara.kaltenbacher@aau.at}
\begin{abstract}  We are interested in shape sensitivity analysis for an optimization problem arising in medical applications of high intensity focused ultrasound. The goal is to find the optimal shape of a focusing acoustic lens so that the desired acoustic pressure at a kidney stone is achieved. Coupling of the silicone acoustic lens and nonlinearly acoustic fluid region is modeled by the Westervelt equation with nonlinear strong damping and piecewise constant coefficients. We follow the variational approach to calculating the shape derivative of the cost functional which does not require computing the shape derivative of the state variable; however assumptions of certain spatial regularity of the primal and the adjoint state are needed to obtain the derivative, in particular for its strong form according to the Delfour-Hadamard-Zol\' esio Structure Theorem.
\end{abstract}

\keywords{nonlinear acoustics, Westervelt's equation, shape derivative} 
\subjclass[2010]{Primary: 49Q12; Secondary: 90C31}

\maketitle
\vspace{-5.5mm}
\section{Introduction}
 The present paper is concerned with the shape optimization problem arising in lithotripsy, where an optimal focusing of the ultrasound waves is needed in order to concentrate the ultrasound pressure at the kidney stone and avoid lesions of the surrounding tissue. \\
\indent The design of currently used devices in lithotripsy is mainly based on two different principles (see, e.g, \cite{KV}, \cite{manfred}, \cite{uffc2002}, \cite{Veljovic}): excitation and self-focusing by a piezo mozaic or an electro-magneto-mechanical principle. \vspace{0.7mm}\\
\begin{center}
   \def\svgwidth{207pt}
\begingroup%
  \makeatletter%
  \providecommand\color[2][]{%
    \errmessage{(Inkscape) Color is used for the text in Inkscape, but the package 'color.sty' is not loaded}%
    \renewcommand\color[2][]{}%
  }%
  \providecommand\transparent[1]{%
    \errmessage{(Inkscape) Transparency is used (non-zero) for the text in Inkscape, but the package 'transparent.sty' is not loaded}%
    \renewcommand\transparent[1]{}%
  }%
  \providecommand\rotatebox[2]{#2}%
  \ifx\svgwidth\undefined%
    \setlength{\unitlength}{968.42395242bp}%
    \ifx\svgscale\undefined%
      \relax%
    \else%
      \setlength{\unitlength}{\unitlength * \real{\svgscale}}%
    \fi%
  \else%
    \setlength{\unitlength}{\svgwidth}%
  \fi%
  \global\let\svgwidth\undefined%
  \global\let\svgscale\undefined%
  \makeatother%
  \begin{picture}(1,0.76260738)%
    \put(0,0){\includegraphics[width=\unitlength]{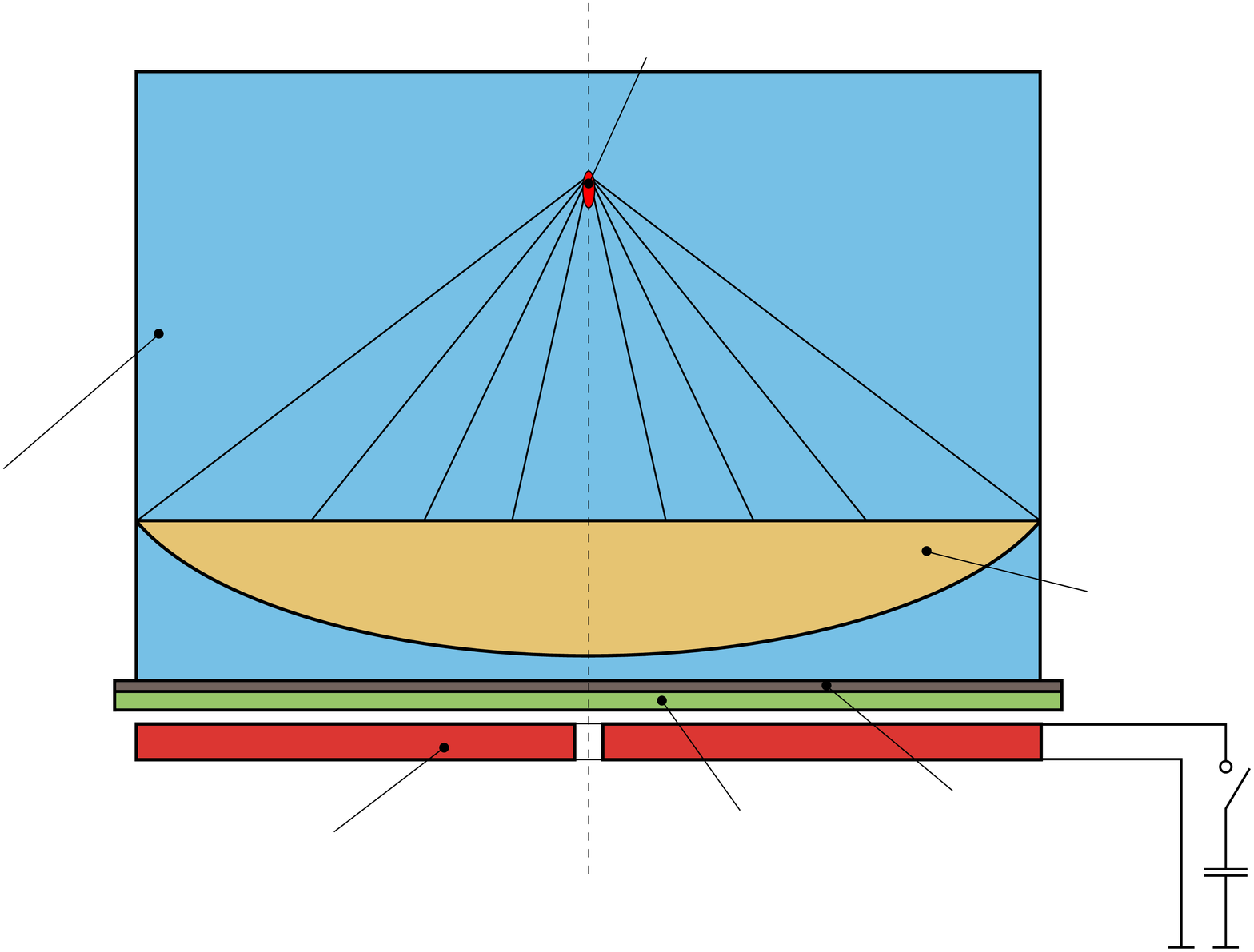}}%
    \put(0.18847806,0.14273979){\color[rgb]{0,0,0}\makebox(0,0)[lt]{\begin{minipage}{0.10739098\unitlength}\raggedright \end{minipage}}}%
    \put(0.2209543,0.07883277){\color[rgb]{0,0,0}\makebox(0,0)[lt]{\begin{minipage}{0.2100615\unitlength}\raggedright \small Coil\end{minipage}}}%
    \put(0.50207329,0.08638679){\color[rgb]{0,0,0}\makebox(0,0)[lt]{\begin{minipage}{0.24074464\unitlength}\raggedright \small Membrane\end{minipage}}}%
    \put(0.72648888,0.1139189){\color[rgb]{0,0,0}\makebox(0,0)[lt]{\begin{minipage}{0.14397472\unitlength}\raggedright \small Rubber\end{minipage}}}%
    \put(0.50080019,0.73600054){\color[rgb]{0,0,0}\makebox(0,0)[lb]{\smash{\small Kidney stone}}}%
    \put(0.28536583,0.79979627){\color[rgb]{0,0,0}\makebox(0,0)[lb]{\smash{\small Axis of Rotation}}}%
    \put(0.87764683,0.26170114){\color[rgb]{0,0,0}\makebox(0,0)[lb]{\smash{\small Lens} $\Omega_+$}}%
    \put(-0.11029899,0.31652083){\color[rgb]{0,0,0}\makebox(0,0)[lb]{\smash{\small Fluid} $\Omega_-$}}%
  \end{picture}%
\endgroup%

 {\scriptsize Schematic of a power source in lithotripsy \\ based on the electromagnetic principle}
\end{center}
We investigate the latter case, where Lorentz forces acting on a membrane radiate an acoustic pulse in a fluid; the pulse is then focused by a silicone lens at a kidney stone. The aim of obtaining a sharp focus of the acoustic pressure exactly at the desired location leads to the task of optimizing the shape of the acoustic lens. We will consider the case where the lens is modeled as an acoustic medium surrounded by a nonlinearly acoustic fluid.\\
\indent One of the commonly used models for propagation of nonlinear ultrasound is the Westervelt equation
\begin{align} \label{westervelt}
(1-2ku)\ddot{u} -c^2 \Delta u - b\Delta \dot{u} = 2k(\dot{u})^2,
\end{align}
given here in terms of the acoustic pressure $u$, where $b$ denotes the diffusivity and $c$ the speed of sound, $k = \beta_a / \lambda$, $\lambda=\varrho c^2$ is the bulk modulus, $\varrho$ is the mass density, $\beta_a = 1 + B/(2A)$, and $B/A$ represents the parameter of nonlinearity. For a detailed derivation of \eqref{westervelt} we refer to \cite{HamiltonBlackstock},  \cite{manfred} and \cite{Westervelt}. \\
\indent Westervelt's equation is a quasilinear wave equation which can degenerate due to the factor $1-2ku$. To avoid this degeneracy, i.e. to find an essential bound for the acoustic pressure $u$, Sobolev embedding $H^2(\Omega) \rightarrow L^\infty(\Omega)$ is employed (cf. \cite{KL08}, \cite{KLV10}) and thus for this model only very smooth solutions can be shown to exist. However, $H^2$ regularity on the whole domain is too high of a demand in the case of interface coupling of acoustic regions with different material parameters. \\
\indent In \cite{BKR13}, a nonlinear damping term was introduced to the Westervelt equation
\begin{align} \label{westervelt_damp}
(1-2ku)\ddot{u}-c^2\Delta u -\div (b((1-\delta) +\delta|\nabla \dot{u}|^{q-1})\nabla \dot{u})
=2k(\dot{u})^2,
\end{align} 
with $\delta \in (0,1)$, $q \geq 1$, $q>d-1$, $d \in \{1,2,3\}$, which allowed to show existence of weak solutions with $W^{1,q+1}$ regularity in space, and in turn well-posedness of the acoustic-acoustic coupling problem. The case of the acoustic-acoustic coupling, which we are interested in, is modeled by the presence of spatially varying coefficients in the weak form of the  equation \eqref{westervelt_damp} (see \cite{BGT97} for the linear and \cite{BKR13} and \cite{VN} for the nonlinear case) as follows:
\begin{equation}\label{ModWest}
\begin{cases} 
\text{Find} \ u  \ \text{such that} \vspace{1.5mm}\\
\int_0^T \int_{\Omega} \{\frac{1}{\lambda(x)}(1-2k(x)u)\ddot{u} \phi+\frac{1}{\varrho(x)}\nabla u \cdot \nabla \phi + b(x)(1-\delta(x))\nabla \dot{u} \cdot \nabla \phi \vspace{1.5mm}\\
\quad \quad \quad +b (x)\delta(x)|\nabla \dot{u}|^{q-1} \nabla \dot{u} \cdot \nabla \phi-\frac{2k(x)}{\lambda(x)}(\dot{u})^2 \phi\} \, dx \, ds =0 \vspace{1.5mm}\\
\text{holds for all test functions} \ \phi \in \tilde{X},
\end{cases}
\end{equation}
with $(u,\dot{u})\vert_{t=0}=(u_0,u_1)$, and appropriately chosen test space $\tilde{X}$. In this model $b$ denotes the quotient between the diffusivity and the bulk modulus, while the other coefficients retain their meaning. For brevity, we emphasized the space dependence of coefficients in \eqref{ModWest}, while omitting space and time dependence of $u$ and the test function in the notation.  \vspace{-1mm}
\subsection{Some difficulties related to the model \eqref{ModWest}.} In \cite{KP}, the problem of optimizing the excitation part of the boundary in lithotripsy is considered, with the initial-boundary value problem for the Westervelt equation \eqref{westervelt} as the optimization constraint. This situation arises when excitation and self-focusing of high intensity ultrasound is performed by a piezo-mosaic. Compared to the the problem investigated there, the case of focusing by an acoustic lens presents us with several additional challenges. Not only a part of the domain boundary is optimized, but a subdomain which lies in the interior of the domain; this implies providing shape sensitivity analysis for an acoustic-acoustic interface problem. Insufficient spatial regularity of the primal (at most $W^{1,q+1}$ in space) and the adjoint state ($H^{1}$ in space) on the whole domain does not allow for the shape derivative to be expressed in terms of the boundary integrals. However, it turns out that the state variable exhibits $H^2$-regularity on each of the subdomains, provided that its gradient remains essentially bounded in space and time on the whole domain, which we will be able to use in our advantage. \\ \indent Working with the state equation also implies handling the nonlinear damping term of the (time-dependent) $q$-Laplace type, which is in itself a nontrivial task. Results on hyperbolic equations involving a $q$-Laplace damping term are sparse, and to the authors' best knowledge do not include considerations of interface or shape optimization problems.  \vspace{-1mm}
\subsection{Notational remark}  To avoid confusion, it should be emphisized that we use a dot notation for time differentiation and $t \in [0,T]$ for the physical time variable, while $\tau \in \mathbb{R}$ will be used for the artificial time variable to indicate varying domains. If $\Omega_+$ is the initial shape of the lens, then  $\Omega_{+,\tau}$ will denote the perturbed lens obtained by moving points into the direction of some vector field $h$ by some steplength $\tau$. 
\subsection{Outline of the paper} The rest of the paper is organized as follows. We formulate the shape optimization problem in Section \ref{sho_problem} and perform an analysis of the state and the adjoint problem. Section \ref{existence_opt_shapes} deals with the question of existence of minimizers. In Section \ref{state_perturbed} we investigate the state equation on the domain with the perturbed lens. Some preliminary results needed to obtain the shape derivative of the cost functional are given in Section \ref{preliminary}. Finally, Section \ref{computation_shape} contains computation of the shape derivative.
\section{Shape optimization problem} \label{sho_problem}
\noindent Let $\Omega \subset \mathbb{R}^d$, $d \in \{1,2,3\}$, be a fixed bounded domain with strongly Lipschitz  boundary $\partial \Omega$, and $\Omega_{+}$ a subdomain, representing the lens, such that $\overline{\Omega}_{+} \subset \Omega$ and $\Omega_{+}$  has strongly Lipschitz boundary $\partial \Omega_+=\Gamma$. \\

\begin{minipage}{0.4\textwidth}
 \def\svgwidth{120pt}
\hspace{5mm}  
\begingroup%
  \makeatletter%
  \providecommand\color[2][]{%
    \errmessage{(Inkscape) Color is used for the text in Inkscape, but the package 'color.sty' is not loaded}%
    \renewcommand\color[2][]{}%
  }%
  \providecommand\transparent[1]{%
    \errmessage{(Inkscape) Transparency is used (non-zero) for the text in Inkscape, but the package 'transparent.sty' is not loaded}%
    \renewcommand\transparent[1]{}%
  }%
  \providecommand\rotatebox[2]{#2}%
  \ifx\svgwidth\undefined%
    \setlength{\unitlength}{321.05312653bp}%
    \ifx\svgscale\undefined%
      \relax%
    \else%
      \setlength{\unitlength}{\unitlength * \real{\svgscale}}%
    \fi%
  \else%
    \setlength{\unitlength}{\svgwidth}%
  \fi%
  \global\let\svgwidth\undefined%
  \global\let\svgscale\undefined%
  \makeatother%
  \begin{picture}(1,0.88560109)%
    \put(0,0){\includegraphics[width=\unitlength]{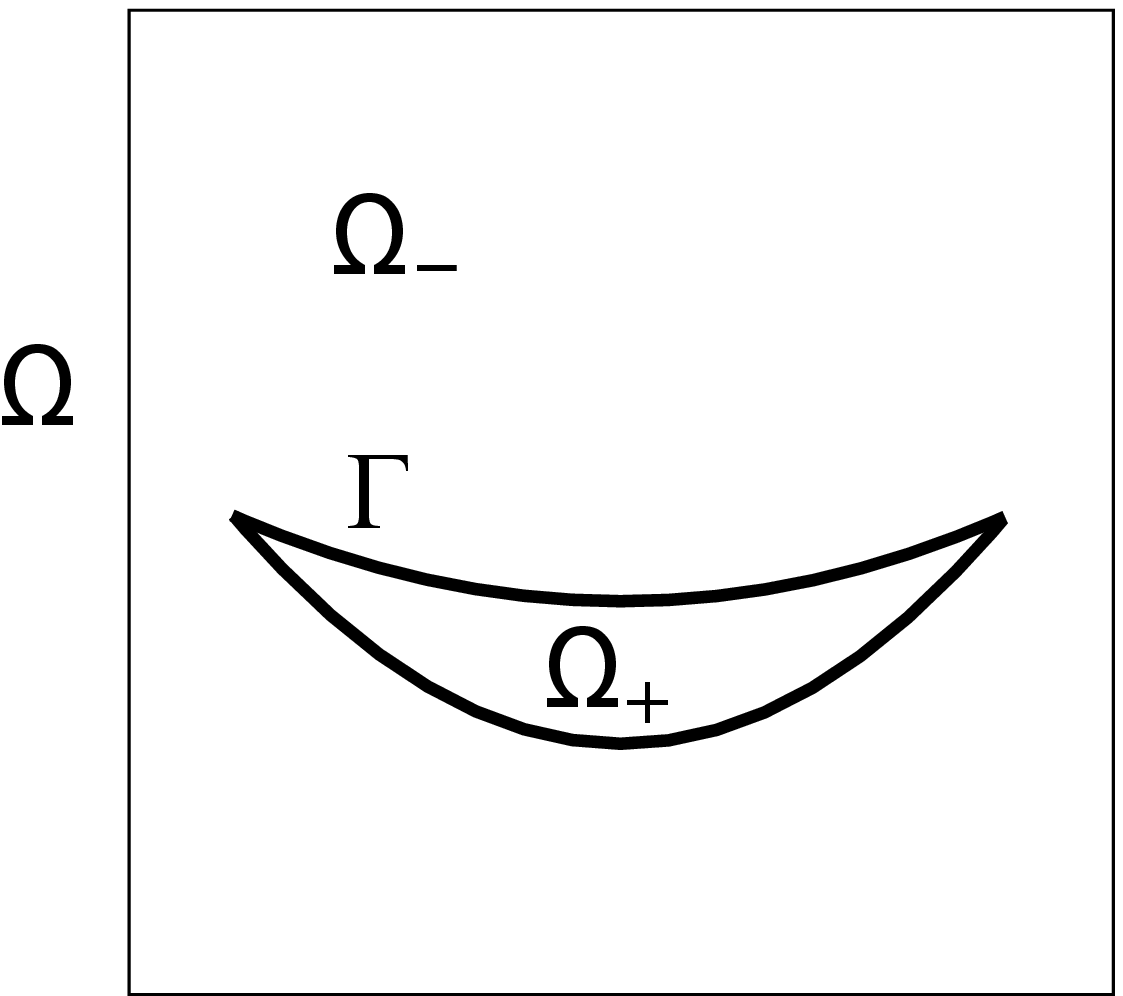}}%
    \put(0.48408068,0.25951276){\color[rgb]{0,0,0}\makebox(0,0)[lb]{\smash{Ω}}}%
    \put(0.56155101,0.24019995){\color[rgb]{0,0,0}\makebox(0,0)[lb]{\smash{+}}}%
    \put(0.2934314,0.64808884){\color[rgb]{0,0,0}\makebox(0,0)[lb]{\smash{Ω}}}%
    \put(0.36737739,0.63863532){\color[rgb]{0,0,0}\makebox(0,0)[lb]{\smash{-}}}%
    \put(0.31140779,0.41982592){\color[rgb]{0,0,0}\makebox(0,0)[lb]{\smash{Γ}}}%
    \put(-0.00472079,0.51225733){\color[rgb]{0,0,0}\makebox(0,0)[lb]{\smash{Ω}}}%
  \end{picture}%
\endgroup%

\\ \hspace*{5mm}  {\scriptsize Lens $\Omega_+$ and fluid $\Omega_-$ regions}
\end{minipage}
\begin{minipage}{0.55\textwidth}
\vspace{-5.5mm}
 We denote by $\Omega_{-}=\Omega \setminus \overline{\Omega}_{+}$ the part of the domain representing the fluid region. We then have  $\partial \Omega_-=\Gamma \cup \partial \Omega$. \\
$n_{+}$, $n_{-}$  will stand for the unit outer normals to lens $\Omega_{+}$ and fluid region $\Omega_{-}$. 
Restrictions of a function $v$ to $\Omega_{+,-}$ will be denoted by $v_{+}$, $v  _{-}$ and $\llbracket v \rrbracket:=v_+-v_-$ will denote the jump over $\Gamma$. \\
Note that the assumptions on the regularity of the subdomains will eventually have to be strengthened to $C^{1,1}$ in order to express the shape derivative in terms of the boundary integrals over the interface $\Gamma$.
\end{minipage} \vspace{1.5mm}\\
\noindent We consider the following optimization problem
\begin{align} \label{shape_opt_problem}
\begin{cases}
\displaystyle \min_{\substack{\Omega_+ \in \mathcal{O}_{\text{ad}}\\ u \in L^2(\Omega \times [0,T])}} J(u, \Omega_+)\equiv \min_{\substack{\Omega_+ \in \mathcal{O}_{\text{ad}}\\ u \in L^2(\Omega \times [0,T])}} \, \int_0 ^T \int_{\Omega} (u-u_{\text{d}})^2 \, dx \, ds \vspace{2mm} \\
 \text{subject to the constraint (1.3)},
\end{cases}
\end{align}
with the test space $\tilde{X}=L^2(0,T;W_0^{1,q+1}(\Omega))$, and where the coefficients $\lambda$, $k$, $\varrho$, $b$ and $\delta$  are piecewise continuously differentiable and allowed to jump only over the interface $\partial \Omega_+=\Gamma$, i.e.  
\begin{align} \label{coeff_1}
\begin{cases}
\lambda, k, \varrho, b, \delta \in L^{\infty}(\Omega), \\
w_i:=w\vert_{\Omega_i} \in C^1(\Omega_i), \ \text{for} \ w \in \{b, \varrho, \lambda, \delta, k\}, i \in\{+,-\}.
\end{cases}
\end{align}
$u_d \in L^2(0,T;L^2(\Omega))$ is the desired acoustic pressure; $\mathcal{O}_{\text{ad}}$ represents the set of admissible domains and is defined as follows:
\begin{align*}
\mathcal{O}_{\text{ad}}=\{\Omega_+:\overline{\Omega}_+ \subset \Omega, \ \Omega_+ \ \text{is open and Lipschitz with uniform Lipschitz constant} \ L_{\mathcal{O}}\}.
\end{align*} 
We assume that $q \geq 1$, $q>d-1$ for the state and the adjoint problem to be well-posed; however we will need to strengthen this assumption later to $q>2$ in order to prove certain properties needed for the characterization of the shape derivative. \\
The strong form of the PDE constraint is given by
\begin{align} \label{state_problem}
\begin{cases}
\frac{1}{\lambda(x)}(1-2k(x)u)\ddot{u}-\div(\frac{1}{\varrho(x)}\nabla u)
-\text{div}(b(x)((1-\delta(x)) +\delta(x)|\nabla \dot{u}|^{q-1})\nabla \dot{u}) \vspace{1.5mm} \\
=\frac{2k(x)}{\lambda(x)}(\dot{u})^2 \quad \text{ in } \Omega_{+} \cup \Omega_{-}, \vspace{1.5mm} \\
 \llbracket u \rrbracket=0 \quad \text{on} \ \Gamma=\partial \Omega_+, \vspace{1.5mm} \\
  \Bigl \llbracket \frac{1}{\varrho(x)} \frac{\partial u}{\partial n_{+}}+b(x)(1-\delta(x))\frac{\partial \dot{u}}{\partial n_{+}}+b(x)\delta(x)|\nabla \dot{u}|^{q-1}\frac{\partial \dot{u}}{\partial n_{+}}\Bigr \rrbracket=0 \quad \text{on} \ \Gamma=\partial \Omega_+, \vspace{1.5mm} \\
 u=0 \quad \text{on} \ \partial \Omega, \vspace{1.5mm}\\
(u,\dot{u})\vert_{t=0}=(u_0,u_1).
\end{cases}
\end{align}
For simplicity of exposition we consider homogeneous Dirichlet boundary conditions on the outer boundary $\partial\Omega$. We mention in passing that Neumann and absorbing boundary conditions as in \cite{KP}, \cite{VN} could be easily incorporated here as well, with some additional terms involving the Neumann boundary excitation in the analysis of the state equation, but with no changes in the shape derivative itself, since the outer boundary is not subject to modifications.
\subsection{Analysis of the state equation} \label{state}
Let us now add assumptions regarding the sign of the coefficients in the state problem:
\begin{align}\label{coeff_2}
\begin{cases}
\ \lambda, k, \varrho, b, \delta \in L^\infty(\Omega),\\
w_i:=w\vert_{\Omega_i} \in C^1(\Omega_i) \ \text{for} \ w \in \{b, \varrho, \lambda, \delta, k\}, \ i \in\{+,-\}, \\
\overline{w}:=|w|_{L^{\infty}(\Omega)} \ \text{for} \ w \in \{b, \varrho, \lambda, \delta, k\}, \ \overline{\delta}<1, \\
\ \exists \underline{\varrho}, \underline{b}, \underline{\delta}: \ \lambda \geq \underline{\lambda}>0, \ \varrho\geq\underline{\varrho}>0, \ b\geq\underline{b}>0, \ \delta \geq \underline{\delta}>0.
\end{cases}
\end{align} 
We will utilize the following well-posedness result (cf. Theorem 2.3. and Corollary 4.1, \cite{BKR13}):
\begin{proposition} (Local well-posedness of the state problem)\label{aa_corollary}
Let $q>d-1$, $q \geq 1$ and the assumptions \eqref{coeff_2} hold. For any $T>0$ there is a $\kappa_T>0$ such that for all  $u_0, u_1 \in W_0^{1,q+1}(\Omega)$ with
\begin{align*}
|u_1|^2_{L^2(\Omega)}+|\nabla u_0|^2_{L^2(\Omega)}+|\nabla u_1|^2_{L^2(\Omega)}+|\nabla u_0|^2_{L^{q+1}(\Omega)}+|\nabla u_1|^{q+1}_{L^{q+1}(\Omega)} \leq \kappa_T^2
\end{align*}
there exists a unique solution $u\in \cW \subset X=H^2(0,T;L^2(\Omega))\cap W^{1,\infty}(0,T;W_0^{1,q+1}(\Omega))$ of \eqref{ModWest}, and
\begin{eqnarray} \label{defcW}
\cW =\{v\in X
&:& \|\ddot{v}\|_{L^2(0,T;L^2(\Omega))}\leq \bar{m}  
 \wedge \|\nabla \dot{v}\|_{L^\infty(0,T;L^2(\Omega))}\leq \bar{m} \\
&& \wedge \|\nabla \dot{v}\|_{L^{q+1}(0,T;L^{q+1}(\Omega))}\leq \bar{M}\},\nonumber 
\end{eqnarray}
with \begin{equation*}
2\overline{k}C_{W_0^{1,q+1},L^\infty}^\Omega(\kappa_T+
T^{\frac{q}{q+1}} \bar{M}) <1, 
\end{equation*}  and $\bar{m}$ sufficiently small. Furthermore, the following estimate holds:
\begin{align} \label{ModWest_energyest}
&\|u\|^2_{L^{\infty}(0,T;L^{\infty}(\Omega))}+\|\ddot{u}\|^2_{L^2(0,T;L^2(\Omega))}+\|\nabla \dot{u}\|^2_{L^2(0,T;L^2(\Omega))}+\|\nabla \dot{u}\|^{q+1}_{L^{q+1}(0,T;L^{q+1}(\Omega))} \nonumber\\
&
+\|\dot{u}\|^2_{L^{\infty}(0,T;L^2(\Omega))}+\|\nabla u\|^2_{L^{\infty}(0,T;L^2(\Omega))}
+\|\nabla \dot{u}\|^2_{L^{\infty}(0,T;L^2(\Omega))}+\|\nabla \dot{u}\|^{q+1}_{L^{\infty}(0,T;L^{q+1}(\Omega))}\\
\leq&\, C(|u_1|^2_{L^2(\Omega)}+|\nabla u_0|^2_{L^2(\Omega)}+|\nabla u_1|^2_{L^2(\Omega)}+|\nabla u_1|^{q+1}_{L^{q+1}(\Omega)}), \nonumber
\end{align}
where $C$ depends on $\lambda,\varrho, b,\delta, k$, and the norm $C^{\Omega}_{H_0^1,L^4}$ of the embedding $H_0^1(\Omega) \hookrightarrow L^4(\Omega)$. 
\end{proposition}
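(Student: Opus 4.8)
The plan is to establish well-posedness by a fixed-point argument built on top of a linearized problem that is solved via Galerkin approximation. I would introduce the map $\mathcal{T}: v \mapsto u$ where, for fixed $v$ in the ball $\cW$ of \eqref{defcW}, $u$ solves the problem obtained from \eqref{ModWest} by freezing the coefficient of $\ddot{u}$ at $\tfrac{1}{\lambda}(1-2kv)$ (and the source correspondingly), while keeping the monotone $q$-Laplace damping term intact. The crucial first observation is non-degeneracy: since $q>d-1$, the embedding $W_0^{1,q+1}(\Omega)\hookrightarrow L^\infty(\Omega)$ holds, so for $v\in\cW$ one controls $\|v\|_{L^\infty(0,T;L^\infty(\Omega))}$ by $C_{W_0^{1,q+1},L^\infty}^\Omega(\kappa_T+T^{q/(q+1)}\bar{M})$; the displayed smallness condition then forces $1-2kv\geq 1-2\overline{k}C_{W_0^{1,q+1},L^\infty}^\Omega(\kappa_T+T^{q/(q+1)}\bar{M})>0$ uniformly in $(x,t)$, so the frozen equation is genuinely of evolutionary type.

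For the linearized problem I would use a Faedo--Galerkin scheme in a basis of $W_0^{1,q+1}(\Omega)$ (e.g. eigenfunctions of a suitable elliptic operator), keeping the $q$-Laplace term nonlinear at the discrete level so its monotonicity can be exploited. The heart of the matter is the energy estimate \eqref{ModWest_energyest}: testing with $\dot{u}$ yields control of $\|\nabla u\|_{L^\infty(0,T;L^2)}$ and $\|\dot{u}\|_{L^\infty(0,T;L^2)}$ together with the dissipation $\|\nabla\dot{u}\|_{L^2(0,T;L^2)}^2+\|\nabla\dot{u}\|_{L^{q+1}(0,T;L^{q+1})}^{q+1}$ coming from the two damping terms; testing with $\ddot{u}$ (using the non-degeneracy to invert the leading coefficient) delivers $\|\ddot{u}\|_{L^2(0,T;L^2)}$ and the $L^\infty$-in-time versions of the dissipation norms. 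The quadratic source and the terms stemming from the time-dependence of the frozen leading coefficient must be absorbed using the embeddings $H_0^1\hookrightarrow L^4$ and $W_0^{1,q+1}\hookrightarrow L^\infty$, after which Gronwall's inequality closes the bound in terms of the initial data alone.

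Passing the Galerkin limit requires Aubin--Lions--Simon compactness to obtain the strong convergence of $\nabla\dot{u}$ needed for the quadratic source, together with the Minty--Browder monotonicity trick to identify the weak limit of $|\nabla\dot{u}|^{q-1}\nabla\dot{u}$ with the $q$-Laplacian of the limit. One then checks that, for $\kappa_T$ and $\bar{m}$ chosen small enough, the energy estimate forces $u$ back into $\cW$, so that $\mathcal{T}$ is a self-map; existence of a fixed point follows either from Schauder's theorem via the compactness already established, or from a contraction estimate for the difference of two images in a lower-order norm, which simultaneously yields uniqueness.

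The step I expect to be the main obstacle is the treatment of the $q$-Laplace damping term. Because it is monotone but genuinely nonlinear and non-Hilbertian, identifying its Galerkin limit demands the monotonicity (Minty) argument rather than compactness alone, and establishing the contraction for $\mathcal{T}$ forces one to estimate differences of $|\nabla\dot{u}|^{q-1}\nabla\dot{u}$ through the elementary vector monotonicity inequalities for the $q$-Laplacian. This is compounded by the non-degeneracy condition, which couples the admissible data size $\kappa_T$ to the coefficient bounds, so the smallness parameters $\kappa_T$ and $\bar{m}$ must be tuned consistently across every estimate.
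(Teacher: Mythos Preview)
The paper does not supply its own proof of this proposition: it is quoted verbatim as a known result, with the parenthetical reference ``cf.\ Theorem 2.3 and Corollary 4.1, \cite{BKR13}'' immediately preceding the statement. So there is no in-paper argument to compare against; the proposition functions here purely as a black box input to the shape-optimization analysis.

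That said, your sketch is a faithful outline of the standard strategy for this class of quasilinear wave equations with monotone $q$-Laplace damping, and it matches what the cited reference \cite{BKR13} actually does: freeze only the degenerate leading coefficient (keeping the $q$-Laplace term nonlinear), run a Galerkin scheme, derive the two-level energy estimate by testing with $\dot u$ and $\ddot u$, use the $W_0^{1,q+1}\hookrightarrow L^\infty$ embedding to secure non-degeneracy, pass to the limit via weak compactness plus a Minty-type monotonicity argument for the damping, and close the fixed-point via self-mapping of $\cW$ and a contraction in a lower norm. Your identification of the Minty step and the difference estimate for $|\nabla\dot u|^{q-1}\nabla\dot u$ as the delicate points is accurate; these are handled in \cite{BKR13} exactly through the elementary vector inequalities you allude to (which appear later in the present paper as \eqref{aa_ineq3}--\eqref{aa_ineq5}). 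In short: your proposal is correct and essentially reconstructs the argument of the cited source, which the present paper simply invokes.
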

Since $u-u_d \in L^2(0,T;L^2(\Omega))$, the cost functional is well-defined.  \\

\indent Degeneracy of the Westervelt equation is avoided here by employing the embedding $W_0^{1,q+1}(\Omega) \hookrightarrow L^{\infty}(\Omega)$ (since $q+1>d$), and the following estimate 
\begin{align*}
|u(t)|_{L^{\infty}(\Omega)}\leq&\,  C^{\Omega}_{W_0^{1,q+1},L^{\infty}}|\nabla u(t)|_{L^{q+1}(\Omega)}\\
\leq&\, C^{\Omega}_{W_0^{1,q+1},L^{\infty}}|\nabla u_0+\int_0^t \nabla \dot{u}(s) \, ds \,|_{L^{q+1}(\Omega)}\\
\leq&\, C^{\Omega}_{W_0^{1,q+1},L^{\infty}}\Bigl(|\nabla u_0|_{L^{q+1}(\Omega)}+\Bigl(t^q \int_0^t \int_\Omega |\nabla \dot{u}(y,s)|^{q+1}\, dy \, ds \Bigr)^{\frac{1}{q+1}}\Bigr),
\end{align*}
which leads to the bound
\begin{equation} \label{avoid_degeneracy}
\begin{aligned}
&1-a_0< 1-2ku < 1+a_0, \\
& a_0:= 2\overline{k}C^{\Omega}_{W_0^{1,q+1},L^{\infty}}(|\nabla u_0|_{L^{q+1}(\Omega)}+T^{\frac{q}{q+1}}\|\nabla \dot{u}\|_{L^{q+1}(0,T;L^{q+1}(\Omega))}). 
\end{aligned}
\end{equation}
Due to the Sobolev embedding $W^{1,q+1}(\Omega) \hookrightarrow C^{0,1-\frac{d}{q+1}}(\overline{\Omega})$, we know that $u$ is H\" older continuous in space, i.e. $u \in C^{0,1}(0,T;C^{0,1-\frac{d}{q+1}}(\overline{\Omega}))$.
\subsection{Inequalities} Before proceeding further, let us recall several useful inequalities which will help us deal with the nonlinear damping term appearing in the state equation. From now on, we will always use $C_q$ to denote a generic positive constant depending only on $q$.\\
At several instances, we will utilize the following representation formula for vectors $x,y \in \mathbb{R}^d$ (cf. Chapter 10, \cite{lindqvist}):
\begin{equation} \label{aa_formula}
\begin{aligned}
&|x|^{q-1}x-|y|^{q-1}y \\ 
 =& \, (x-y)\int_0^1|y+\sigma(x-y)|^{q-1} \, d\sigma
+(q-1)\int_0^1 \mathcal{L}(y+\sigma(x-y),(x-y) \, d\sigma,
\end{aligned}
\end{equation}
where 
\begin{equation}\label{calL}
\mathcal{L}(x,y)=|x|^{q-3}(x \cdot y) x
\end{equation}
as well as the inequality
\begin{align} \label{aa_ineq2}
|y+\sigma (x-y)|^{q-1} \leq |y|^{q-1}+|x|^{q-1}, \ \sigma \in [0,1].
\end{align}
For any $\eta \geq 0$, $q >0$ and $x,y \in \mathbb{R}^d$, the following inequality holds (cf. Lemma 5.1, \cite{LiuYan}):
\begin{align} \label{aa_ineq3}
||x|^{q-1}x-|y|^{q-1}y| \leq C_q \, |x-y|^{1-\eta}(|y|^{q-1+\eta}+|x|^{q-1+\eta}).
\end{align}
From here we also get 
\begin{align} \label{aa_ineq4_b}
||x|^{q-1}-|y|^{q-1}| \leq C_q \, |x-y|^{1-\eta}(|y|^{q-1+\eta}+|x|^{q-1+\eta}),
\end{align}
for $0 \leq \eta \leq 1$, $q>1$. \\
We also recall that the following estimate holds for $q \geq 1$:
\begin{align}  \label{aa_ineq5}
(|x|^{q-1}x-|y|^{q-1}y) \cdot (x-y) \geq 2^{1-q}|x-y|^{q+1} \geq 0.
\end{align} 
With the notation \eqref{calL}, as a simple consequence of \eqref{aa_ineq3}, we have for vectors $x,y,z,w$ and any $\eta \geq 0$, $q > 2$ that
\begin{equation} \label{aa_ineq6}
\begin{aligned}
& |\mathcal{L}(x,y)-\mathcal{L}(z,w)| \\
=&\, |(x \cdot y)(|x|^{q-3}x-|z|^{q-3}z)+|z|^{q-3}((y-w)\cdot x+w\cdot(x-z))z| \\
\leq& \, C_q|x-z|^{1-\eta}(|x|^{q-3+\eta}+|z|^{q-3+\eta})|x||y|+|z|^{q-2}(|y-w||x|+|w||x-z|).
\end{aligned}
\end{equation}  
\noindent We will also need Young's inequality (see for instance Appendix B, \cite{evans}) in the form
\begin{align} \label{Young}
\quad  \quad \quad |xy| \leq \varepsilon |x|^r+C(\varepsilon, r) |y|^{\frac{r}{r-1}}  \quad (\varepsilon >0, \ 1<r< \infty),
\end{align}
with $C(\varepsilon, r)=(r-1)r^{\frac{r}{r-1}}\varepsilon ^{-\frac{1}{1-r}}$. 
\subsection{Analysis of the adjoint problem} \label{adjoint}
\indent Since the state equation contains a $q$-Laplace type damping term, its linearization provides a significant challenge. The main difficulty in considering the linearized $q$-Laplacian lies in the need for an essential bound on the gradient of the solution $u$; this is necessary for the linearized operator to be bounded. \\
\indent We will consider the adjoint problem with the assumption that the solution $u$ of \eqref{ModWest} exhibits the following regularity: \\[1ex]
($\mathcal{H}_1$) \hspace{2mm} $ u \in H^2(0,T;W^{1,\infty}(\Omega)) \cap X$, \\[1ex]
\noindent Note that the hypothesis is equivalent to assuming Lipschitz continuity of the acoustic pressure in space, i.e. $u \in H^2(0,T;C^{0,1}(\overline{\Omega}))\cap X$ (cf. Chapter 2, Section 2.6.4, \cite{DZ}). Due to the embedding $H^2(0,T) \hookrightarrow W^{1,\infty}(0,T)$, if the hypothesis holds, we also know that $u \in W^{1,\infty}(0,T;W^{1,\infty}(\Omega))$.  \\
\indent The adjoint problem in weak form is then given as
\begin{align} \label{adjoint_weakform}
\begin{cases}
\int_0^T \int_{\Omega} \{\frac{1}{\lambda}(1-2ku)\ddot{p} \zeta+\frac{1}{\varrho}\nabla p \cdot \nabla \zeta-b(1-\delta)\nabla \dot{p} \cdot \nabla \zeta\vspace{1.5mm} \\
 \quad  -b \delta  (G_u(\nabla p))^{\Lcdot} \cdot \nabla \zeta\} \, dx \, ds = \int_0^T \int_{\Omega}  j^{\prime}(u) \zeta \, dx \, ds  \vspace{1.5mm} \\
\text{holds for all test functions} \ \zeta \in X^\prime,
\end{cases}
\end{align}
with $(p,\dot{p})\vert_{t=T}=(0,0)$, where $j(u)=(u-u_d)^2$, $X^\prime=L^2(0,T;H^1(\Omega))$ and we have used the notation
\begin{align*}
G_u(Y):=|\nabla \dot{u}|^{q-1}Y+(q-1)|\nabla \dot{u}|^{q-3}(\nabla \dot{u} \cdot Y) \nabla \dot{u}
\end{align*}
for the linearized $q$-Laplace operator. The strong formulation of the adjoint problem then reads as:
\begin{align} \label{adjoint_strong}
\begin{cases}
\frac{1}{\lambda}(1-2ku)\ddot{p}-\text{div}(\frac{1}{\varrho}\nabla p) 
 +\text{div}(b(1-\delta)\nabla \dot{p}) + \text{div}(b \delta  (G_u(\nabla p))^{\Lcdot}) \vspace{1.5mm} \\
=  j^{\prime}(u) \ \text{ in } \Omega_{+} \cup \Omega_{-}, \vspace{1.5mm} \\
 \llbracket p \rrbracket=0 \quad \text{on} \ \Gamma, \vspace{1.5mm} \\
  \Bigl \llbracket \frac{1}{\varrho} \frac{\partial p}{\partial n_{+}}-b(1-\delta)\frac{\partial \dot{p}}{\partial n_{+}}-b\delta \,(G_u(\nabla p) \cdot n_+)^{\Lcdot}\Bigr \rrbracket=0 \quad \text{on} \ \Gamma, \vspace{1mm} \\
p=0 \quad \text{on} \ \partial \Omega, \vspace{1.5mm} \\
(p,\dot{p})\vert_{t=T}=(0,0).
\end{cases}
\end{align}
To obtain a forward problem, let $t \in [0,T]$ and $\tilde{p}(t):=p(T-t)$ (cf. Lemma 3.17, \cite{troeltzsch}); then $(\tilde{p},\dot{\tilde{p}})\vert_{t=0}=(0,0)$, and we have the following problem for $\tilde{p}$: \\
\begin{align} \label{adjoint_0} 
\begin{cases}
\int_0^T \int_{\Omega} \Bigl\{\frac{1}{\lambda}(1-2ku)\ddot{\tilde{p} } \zeta+\frac{1}{\varrho}\nabla \tilde{p}  \cdot \nabla \zeta+b(1-\delta)\nabla \dot{\tilde{p} } \cdot \nabla \zeta \vspace{1.5mm} \\
 \quad  +b \delta  (|\nabla \dot{u}|^{q-1} \nabla \dot{\tilde{p} } 
 -  (|\nabla \dot{u}|^{q-1})^{\Lcdot} \nabla \tilde{p})  \cdot \nabla \zeta \vspace{1.5mm} \\
\quad-(q-1)b\delta \Bigl( (|\nabla \dot{u}|^{q-3})^{\Lcdot} (\nabla \tilde{p}  \cdot \nabla \dot{u})\nabla \dot{u} -  |\nabla \dot{u}|^{q-3}(\nabla \dot{\tilde{p}}\cdot \nabla \dot{u})\nabla \dot{u}  \vspace{1.5mm} \\
\quad+  |\nabla \dot{u}|^{q-3}(\nabla \tilde{p} \cdot \nabla \ddot{u})\nabla \dot{u} +  |\nabla \dot{u}|^{q-3}(\nabla \tilde{p} \cdot \nabla \dot{u})\nabla \ddot{u} \Bigr)\cdot \nabla \zeta \Bigr\} \, dx \, ds \vspace{1.5mm} \\
= \int_0^T \int_{\Omega}  j^{\prime}(u) \zeta \, dx \, ds,  \vspace{1.5mm}  \\
\text{holds for all test functions} \ \zeta \in X^\prime.
\end{cases}
\end{align}
\begin{proposition}\label{prop:adj}(Local well-posedness of the adjoint problem)
Let $q \geq 1$, $q >d-1$, assumptions \eqref{coeff_2} on coefficients and hypothesis \text{\normalfont($\mathcal{H}_1$)} hold. For sufficiently small $\bar{m}$, final time $T>0$, and $\|\nabla \dot{u}\|^{q-2}_{L^\infty(0,T;L^\infty(\Omega))}\|\nabla \ddot{u}\|_{L^\infty(0,T;L^\infty(\Omega))}$, there exists a unique weak solution $p \in \hat{X}=C^1(0,T;L^2(\Omega)) \cap H^1(0,T;H^1_0(\Omega))$ of \eqref{adjoint_strong}. 
\end{proposition}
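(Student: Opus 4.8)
The plan is to prove existence by a Galerkin approximation combined with an energy estimate, and to obtain uniqueness from the linearity of \eqref{adjoint_0}. I work throughout with the time-reversed formulation \eqref{adjoint_0} for $\tilde p$, which carries homogeneous initial data $(\tilde p,\dot{\tilde p})|_{t=0}=(0,0)$, and transport the conclusion back via $p(t)=\tilde p(T-t)$. Choosing a Galerkin basis of $H^1_0(\Omega)$ (so that the Dirichlet condition on $\partial\Omega$ is respected and the interface/flux condition on $\Gamma$ is natural), I build finite-dimensional approximations $\tilde p_n$ solving \eqref{adjoint_0} on the corresponding subspace. Since the coefficients depend only on the fixed state $u$, which by $(\mathcal{H}_1)$ together with $H^2(0,T)\hookrightarrow W^{1,\infty}(0,T)$ lies in $W^{1,\infty}(0,T;W^{1,\infty}(\Omega))$, each Galerkin system is a linear ODE with bounded, measurable-in-time coefficients whose mass matrix $\int_\Omega \tfrac1\lambda(1-2ku)\phi_i\phi_j\,dx$ is positive definite by \eqref{avoid_degeneracy}; hence it is solvable on all of $[0,T]$.

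The heart of the argument is a uniform-in-$n$ energy estimate obtained by testing with $\dot{\tilde p}_n$. The mass term $\tfrac1\lambda(1-2ku)\ddot{\tilde p}_n\dot{\tilde p}_n$ produces $\tfrac12\tfrac{d}{dt}\int_\Omega \tfrac1\lambda(1-2ku)|\dot{\tilde p}_n|^2\,dx$ plus a lower-order contribution $\int_\Omega \tfrac{k}{\lambda}\dot u\,|\dot{\tilde p}_n|^2\,dx$ controlled by $\|\dot u\|_{L^\infty(0,T;L^\infty(\Omega))}$; because of \eqref{avoid_degeneracy} the coefficient $\tfrac1\lambda(1-2ku)$ is bounded between positive constants, so this term controls $\|\dot{\tilde p}_n(t)\|^2_{L^2(\Omega)}$. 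The stiffness term gives $\tfrac12\tfrac{d}{dt}\int_\Omega\tfrac1\varrho|\nabla\tilde p_n|^2\,dx$, and the linear strong-damping term $b(1-\delta)|\nabla\dot{\tilde p}_n|^2$ is coercive, $\ge \underline b(1-\overline\delta)\|\nabla\dot{\tilde p}_n\|^2_{L^2(\Omega)}$, supplying dissipation in $L^2(0,T;H^1_0(\Omega))$. The crucial observation is that the principal part of the linearized $q$-Laplace term tested against $\nabla\dot{\tilde p}_n$ is pointwise nonnegative, namely $b\delta\big(|\nabla\dot u|^{q-1}|\nabla\dot{\tilde p}_n|^2+(q-1)|\nabla\dot u|^{q-3}(\nabla\dot u\cdot\nabla\dot{\tilde p}_n)^2\big)\ge 0$, reflecting monotonicity of the $q$-Laplacian Jacobian, so it may be discarded from the upper bound.

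What remains are the mixed terms of \eqref{adjoint_0} carrying $\nabla\tilde p_n$ rather than $\nabla\dot{\tilde p}_n$, in particular those with $(|\nabla\dot u|^{q-1})^{\Lcdot}=(q-1)|\nabla\dot u|^{q-3}(\nabla\dot u\cdot\nabla\ddot u)$ and with $\nabla\ddot u$. Each of these scales pointwise like $|\nabla\dot u|^{q-2}|\nabla\ddot u|\,|\nabla\tilde p_n|\,|\nabla\dot{\tilde p}_n|$, so after Young's inequality \eqref{Young} their $\nabla\dot{\tilde p}_n$-part is absorbed into the coercive damping and the surviving factor $\|\nabla\tilde p_n\|^2_{L^2(\Omega)}$ carries a constant proportional to $\|\nabla\dot u\|^{2(q-2)}_{L^\infty(0,T;L^\infty)}\|\nabla\ddot u\|^2_{L^\infty(0,T;L^\infty)}$, kept under control by the smallness hypothesis of the statement. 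This is the main obstacle: making these bounds rigorous requires the full strength of $(\mathcal{H}_1)$ (so that $\nabla\dot u,\nabla\ddot u$ are essentially bounded) together with the inequalities of Section~2.2 to handle the $|\nabla\dot u|^{q-3}$ factors, which is also where the strengthening to $q>2$ enters via \eqref{aa_ineq6}. Collecting all contributions yields, with $E_n:=\|\dot{\tilde p}_n\|^2_{L^2(\Omega)}+\|\nabla\tilde p_n\|^2_{L^2(\Omega)}$,
\[
\frac{d}{dt}E_n(t)+c_0\|\nabla\dot{\tilde p}_n(t)\|^2_{L^2(\Omega)} \le C\,E_n(t)+C\|j^{\prime}(u)(t)\|^2_{L^2(\Omega)},
\]
and Grönwall's inequality, using $j^{\prime}(u)=2(u-u_d)\in L^2(0,T;L^2(\Omega))$ and $E_n(0)=0$, gives bounds on $\|\dot{\tilde p}_n\|_{L^\infty(0,T;L^2)}$, $\|\nabla\tilde p_n\|_{L^\infty(0,T;L^2)}$ and $\|\nabla\dot{\tilde p}_n\|_{L^2(0,T;L^2)}$ uniform in $n$.

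Finally, these uniform bounds furnish weak-$*$ limits $\tilde p_n\rightharpoonup \tilde p$ in $W^{1,\infty}(0,T;L^2(\Omega))\cap H^1(0,T;H^1_0(\Omega))$; since \eqref{adjoint_0} is linear in $\tilde p$ with coefficients frozen as functions of $u$, passage to the limit in the Galerkin identity is routine and identifies $\tilde p$ as a weak solution, whence $p(t)=\tilde p(T-t)\in\hat X$ solves \eqref{adjoint_strong}. The continuity $\tilde p\in C^1(0,T;L^2(\Omega))$ follows from a standard Lions--Magenes argument, reading $\ddot{\tilde p}$ off the equation as an element of $L^2(0,T;H^{-1}(\Omega))$ and interpolating. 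Uniqueness is immediate: the difference of two solutions solves the homogeneous problem with zero data and zero right-hand side, so the same energy estimate forces $E\equiv 0$.
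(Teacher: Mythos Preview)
Your proposal is essentially correct and follows the same route as the paper: Galerkin approximation, testing with $\dot{\tilde p}$, discarding the pointwise-nonnegative principal part of the linearized $q$-Laplacian, bounding the mixed terms by $C_q|\nabla\dot u|^{q-2}|\nabla\ddot u|\,|\nabla\tilde p|\,|\nabla\dot{\tilde p}|$, then reading off $\ddot{\tilde p}\in L^2(0,T;H^{-1}(\Omega))$ to get $\dot{\tilde p}\in C(0,T;L^2(\Omega))$.

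Two small remarks. First, the paper closes the estimate by taking the essential supremum in $t$ and absorbing the mixed terms directly via the smallness assumptions (cf.\ \eqref{smallnessmT}), rather than by Gr\"onwall; your Gr\"onwall variant works just as well and in fact shows that the smallness of $\|\nabla\dot u\|^{q-2}_{L^\infty L^\infty}\|\nabla\ddot u\|_{L^\infty L^\infty}$ is not strictly needed for existence of $\tilde p$, only for the explicit constants. Second, your side remark that ``the strengthening to $q>2$ enters via \eqref{aa_ineq6}'' is off: inequality \eqref{aa_ineq6} is not used here, and the proposition is stated and proved for $q\ge1$, $q>d-1$. The $|\nabla\dot u|^{q-3}$ factors always appear multiplied by at least two copies of $\nabla\dot u$ or $\nabla\ddot u$, so the net power is $\ge q-1\ge0$ and no extra restriction on $q$ is required; $q>2$ only enters later in Propositions~\ref{prop_h4} and \ref{prop_h3}.
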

\begin{proof}
\noindent The well-posedness of the adjoint problem can be obtained through standard Galerking approximation in space, energy estimates and weak limits (cf. Section 7.2, \cite{evans}). We will focus here on obtaining the crucial energy estimate. Testing \eqref{adjoint_0} with $\zeta(\sigma)=\dot{\tilde{p}} (\sigma)\one_{[0,t)}$ (first in a discretized setting and then via weak limit transfered to the continuous one) yields  \begin{align} \label{adjoint_estimate}
&\frac{1}{2}\Bigl[\int_{\Omega} (1-2ku(\sigma))(\dot{\tilde{p}}(\sigma))^2 \, dx  + \int_{\Omega} \frac{1}{\varrho}|\nabla \tilde{p}(\sigma)|^2 \, dx\Bigr]_0^t 
 +\int_0^t \int_{\Omega} b(1-\delta)|\nabla \dot{\tilde{p}}|^2 \, dx \, ds \nn \\
\leq& - \int_0^t \int_{\Omega} \frac{k}{\lambda}\dot{u} (\dot{\tilde{p}})^2 \, dx \, ds + \int_0^t \int_{\Omega} j^{\prime}(u) \dot{\tilde{p}} \, dx \, ds 
+\int_0^t \int_{\Omega} b \delta\varphi| \nabla \tilde{p}|  |\nabla \dot{\tilde{p}}| \, dx \, ds, 
\end{align}
where 
\begin{align*}
\varphi:=& \, |(|\nabla \dot{u}|^{q-1})^{\Lcdot}|+(q-1)|(|\nabla \dot{u}|^{q-3})^{\Lcdot}||\nabla \dot{u}|^2 
+2(q-1)|\nabla \dot{u}|^{q-2}|\nabla \ddot{u}|
\leq C_q |\nabla \dot{u}|^{q-2}|\nabla \ddot{u}|,
\end{align*} and we have used that $\int_0^t \int_{\Omega} b \delta  \,|\nabla \dot{u}|^{q-1} |\nabla \dot{\tilde{p} }|^2 \, dx \, ds \geq 0$ and $\int_0^t \int_{\Omega} (q-1)b\delta  |\nabla \dot{u}|^{q-3}(\nabla \dot{\tilde{p}}\cdot \nabla \dot{u})(\nabla \dot{u} \cdot \nabla \dot{\tilde{p}}) \geq 0$. By taking essential supremum with respect to $t \in [0,T]$ in \eqref{adjoint_estimate} and employing
\begin{align*}
&\int_0^T \int_{\Omega} |\varphi|| \nabla \tilde{p}|  |\nabla \dot{\tilde{p}}| \, dx \, ds \\
\leq& \,C_q\|\nabla \dot{u}\|^{q-2}_{L^\infty(0,T;L^\infty(\Omega))} \|\nabla \ddot{u}\|_{L^2(0,T;L^\infty(\Omega))}\Bigl(\frac{1}{4 \varepsilon}\|\nabla \tilde{p}\|^2_{L^\infty(0,T;L^2(\Omega))}+\varepsilon \|\nabla \dot{\tilde{p}}\|^2_{L^2(0,T;L^2(\Omega))}\Bigr), 
\end{align*}
we get the energy estimate for $p$
\begin{align*}
&\Bigl(\frac{1}{4}(1-a_0)-T\frac{\overline{k}}{\underline{\lambda}}(C^\Omega_{H^1,L^4})^2\|\dot{u}\|_{L^{\infty}(0,T;L^{2 }(\Omega))}\Bigr)\|\dot{\tilde{p}}\|^2_{L^{\infty}(0,T;L^2(\Omega))} \\
&+\Bigl(\frac{1}{4 \overline{\varrho}}-\frac{1}{4\varepsilon}\overline{b}\hspace{0.08em} \overline{\delta} \, C_q\|\nabla \dot{u}\|^{q-2}_{L^\infty(0,T;L^\infty(\Omega))} \|\nabla \ddot{u}\|_{L^2(0,T;L^\infty(\Omega))}\Bigr)\|\nabla \tilde{p}\|^2_{L^{\infty}(0,T;L^2(\Omega))} \\
&+\Bigl(\frac{1}{2}\underline{b}(1-\overline{\delta})-\frac{\overline{k}}{\underline{\lambda}}(C^\Omega_{H^1,L^4})^2\|\dot{u}\|_{L^{\infty}(0,T;L^{2 }(\Omega))} \\
&\quad \quad-\varepsilon \overline{b} \hspace{0.08em} \overline{\delta} \, C_q\|\nabla \dot{u}\|^{q-2}_{L^\infty(0,T;L^\infty(\Omega))} \|\nabla \ddot{u}\|_{L^2(0,T;L^\infty(\Omega))}\Bigr)\|\nabla \dot{\tilde{p}}\|^2_{L^{2}(0,T;L^2(\Omega))}\\
\leq & \int_0^T \int_{\Omega} j^{\prime}(u) \dot{\tilde{p}} \, dx \, ds \leq 2 \|u-u_d\|_{L^2(0,T;L^2(\Omega))}\|\dot{\tilde{p}}\|_{L^2(0,T;L^2(\Omega))},
\end{align*} 
with $1-a_0>0$, defined as in \eqref{avoid_degeneracy}, bounding away from zero factor $1-2ku$. This estimate holds under additional assumptions on smallness of $T$, $\bar{m}$ and $ \|\nabla \dot{u}\|^{q-2}_{L^\infty(0,T;L^\infty(\Omega))} \|\nabla \ddot{u}\|_{L^2(0,T;L^\infty(\Omega))}$:
\begin{align} \label{smallnessmT} 
& T\frac{\overline{k}}{\underline{\lambda}}(C^\Omega_{H^1,L^4})^2C_{\text{P}}\bar{m} < \frac{1}{4}(1-a_0), \nonumber \\
& \frac{1}{\varepsilon}\overline{b}\hspace{0.08em}\overline{\delta}C_q \, \|\nabla \dot{u}\|^{q-2}_{L^\infty(0,T;L^\infty(\Omega))} \|\nabla \ddot{u}\|_{L^2(0,T;L^\infty(\Omega))}< \frac{1}{\overline{\varrho}}, \\
& \frac{\overline{k}}{\underline{\lambda}}(C^\Omega_{H^1,L^4})^2C_\text{P} \bar{m} <\frac{1}{2}\underline{b}(1-\overline{\delta}), \nonumber
\end{align} 
and some sufficiently small $\varepsilon>0$. Here we have employed Poincar\' e's inequality for functions in $H_0^1(\Omega)$ (see Theorem 2.17, \cite{Leoni} ): \begin{align*}\|\dot{u}\|_{L^{\infty}(0,T;L^2(\Omega))} \leq C_\text{P} \|\nabla \dot{u}\|_{L^{\infty}(0,T;L^2(\Omega))} \leq C_{\text{P}}\bar{m}, \quad \ u \in \cW, \end{align*} with $C_{\text{P}}=C_{\text{P}}(\Omega)>0.$ Moreover, we have
\begin{align*}
|\ddot{\tilde{p}}(t)|_{H^{-1}(\Omega)}=& \, \sup_{\tiny \zeta \in H_0^1(\Omega)\setminus \{0\}} \frac{(\ddot{\tilde{p}}(t),\zeta)_{L^2(\Omega)}}{|\zeta|_{H_0^1(\Omega)}} \\
=& \, \int_{\Omega} \lambda \Bigl\{\frac{2k}{\lambda}u\ddot{\tilde{p}}\zeta-\frac{1}{\varrho}\nabla \tilde{p} \cdot \nabla \zeta 
+ b(1-\delta)\nabla \dot{\tilde{p}} \cdot \nabla \zeta \\
&-b \delta  (|\nabla \dot{u}|^{q-1} \nabla \dot{\tilde{p} } 
 -  (|\nabla \dot{u}|^{q-1})^{\Lcdot} \nabla \tilde{p})  \cdot \nabla \zeta  \\
&+(q-1)b\delta \Bigl( (|\nabla \dot{u}|^{q-3})^{\Lcdot} (\nabla \tilde{p}  \cdot \nabla \dot{u})\nabla \dot{u} -  |\nabla \dot{u}|^{q-3}(\nabla \dot{\tilde{p}}\cdot \nabla \dot{u})\nabla \dot{u}   \\
&+  |\nabla \dot{u}|^{q-3}(\nabla \tilde{p} \cdot \nabla \ddot{u})\nabla \dot{u} +  |\nabla \dot{u}|^{q-3}(\nabla \tilde{p} \cdot \nabla \dot{u})\nabla \ddot{u} \Bigr)\cdot \nabla \zeta   
+ j^{\prime}(u) \zeta \Bigr\} \, dx .
\end{align*} 
This further implies
\begin{align*}
|\ddot{\tilde{p}}(t)|_{H^{-1}(\Omega)}\leq& \, C(|u(t)|_{L^\infty(\Omega)}|\ddot{\tilde{p}}(t)|_{H^{-1}(\Omega)}+|\nabla \tilde{p}(t)|_{L^2(\Omega)}|\nabla \dot{u}(t)|^{q-2}_{L^\infty(\Omega)}|\nabla \ddot{u}(t)|_{L^\infty(\Omega)}\\
&+|\nabla \dot{\tilde{p}}(t)|_{L^2(\Omega)}|\nabla \dot{u}(t)|^{q-1}_{L^\infty(\Omega)}+|u(t)-u_d(t)|_{L^2(\Omega)}),
\end{align*}
with $C=C(k,\lambda, \varrho, b,\delta, q) >0$.
By squaring and integrating over $(0,T)$, we can achieve that $\ddot{\tilde{p}} \in L^2(0,T;H^{-1}(\Omega))$. \\
\indent According to Theorem 3, Section 5.9, \cite{evans}, since $\dot{\tilde{p}} \in L^2(0,T;H_0^1(\Omega))$ and $\ddot{\tilde{p}} \in L^2(0,T;H^{-1}(\Omega))$, it follows that $\dot{\tilde{p}} \in C(0,T;L^2(\Omega))$. The statement then comes from reversing the time transformation.
\end{proof}
\section{Existence of optimal shapes} \label{existence_opt_shapes}
From now on, for simplicity of exposition, we assume that all the coefficients in the state equation are piecewise constants, i.e. 
\begin{align} \label{coeff_3}
\begin{cases}
\ \lambda, k, \varrho, b, \delta \in L^\infty(\Omega),\\
w_i:=w\vert_{\Omega_i} \ \text{is constant}, \ \text{for} \ w \in \{b, \varrho, \lambda, \delta, k\}, i \in\{+,-\}, \\
w_i>0 \ \text{for} \ w \in \{b, \varrho, \lambda\}, \ \delta_i \in (0,1), \ k_i \in \mathbb{R}.
\end{cases}
\end{align} 
Note that now $\underline{\omega}=\min\{|\omega_+|,|\omega_-|\}$, $\overline{\omega}=\max\{|\omega_+|,|\omega_-|\}$, where $\omega \in \{b, \varrho, \lambda, \delta, k\}$.\\
\indent We turn next to the question of existence of minimizers for the shape optimization problem \eqref{shape_opt_problem}, with the coefficients satisfying assumptions \eqref{coeff_3}. The main idea of the proof is to employ the Bolzano-Weierstrass theorem for continuous functions on compact sets (for this approach see, for instance, Section 3, \cite{GL} and  Section 3.2, \cite{KK}). \\
\indent  We recall the following compactness result (cf. Theorem 2, \cite{GL}):
\begin{theorem} \label{minimizers}
Let $\Omega^n_+$ be a sequence in $\mathcal{O}_{\text{ad}}$. Then there exists $\Omega^\star_+ \in \mathcal{O}_{\text{ad}}$ and a subsequence $\Omega^{n_k}_+$ which converges to $\Omega^\star_+$ in the sense of Hausdorff, and in the sense of characteristic functions. Additionally, $\overline{\Omega}^+_{n_k}$ and $\partial \Omega^+_{n_k}$ converge in the sense of Hausdorff towards $\overline{\Omega}^\star_+$ and $\partial \Omega^\star_+$, respectively.
\end{theorem}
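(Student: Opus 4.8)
The plan is to obtain the result from the general compactness theory for families of domains satisfying a uniform cone (equivalently, uniform Lipschitz) condition. The uniform Lipschitz constant $L_{\mathcal{O}}$ built into $\mathcal{O}_{\text{ad}}$ is precisely the ingredient that makes the class compact: without it, Hausdorff limits of open sets may degenerate into sets with cusps, cracks, or lower-dimensional pieces, and volume may be lost across the limit. Every step below therefore rests on this uniformity.

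First I would establish the Hausdorff compactness. Each admissible closure $\overline{\Omega}_+^n$ is a closed subset of the fixed compact set $\overline{\Omega}$, so by the Blaschke selection theorem there is a subsequence along which the closed sets $\overline{\Omega}_+^{n_k}$, the boundaries $\partial\Omega_+^{n_k}$, and the complements $\overline{\Omega\setminus\Omega_+^{n_k}}$ all converge in the Hausdorff metric to closed limit sets; call them $K$, $F$, and $K'$. The candidate limit shape is $\Omega_+^\star:=\operatorname{int}(K)$, and the work is to show that this set is open and Lipschitz with the same constant $L_{\mathcal{O}}$, that $\overline{\Omega}_+^\star=K$ and $\partial\Omega_+^\star=F$, and that $\overline{\Omega}_+^\star\subset\Omega$, so that $\Omega_+^\star\in\mathcal{O}_{\text{ad}}$.

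The heart of the argument, and the step I expect to be the main obstacle, is the stability of the uniform cone condition under Hausdorff limits: one must show that the same cone opening and height remain admissible at every boundary point of the limit set. Concretely, I would fix a point of $F$, approximate it by boundary points of $\Omega_+^{n_k}$, transport the uniformly sized interior and exterior cones guaranteed at those points by the uniform Lipschitz property, and verify by a Hausdorff-continuity argument that cones of the same dimensions fit at the limit point on both sides. This simultaneously prevents outward cusps and inward cracks, forces $\Omega_+^\star$ to be open with $\partial\Omega_+^\star=F$ and $\overline{\Omega}_+^\star=K$, and shows that $\Omega_+^\star$ is Lipschitz with constant $L_{\mathcal{O}}$; together with the inclusion $\overline{\Omega}_+^\star\subset\Omega$ (inherited from $\overline{\Omega}_+^{n_k}\subset\Omega$ and the admissible-class constraint keeping shapes off $\partial\Omega$), this yields $\Omega_+^\star\in\mathcal{O}_{\text{ad}}$. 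This is exactly the content of Chenais's stability theorem, which I would invoke rather than reproduce the delicate cone estimates.

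Finally, I would upgrade Hausdorff convergence to convergence of characteristic functions in $L^1(\Omega)$, that is $|\Omega_+^{n_k}\,\triangle\,\Omega_+^\star|\to 0$. Hausdorff convergence of the boundaries controls only their location, not the Lebesgue measure trapped near them, so the uniform cone property is used a second time: it provides a uniform lower bound on the measure of the cone at each boundary point and hence a uniform estimate on the measure of thin tubular neighbourhoods of $\partial\Omega_+^{n_k}$. This excludes concentration of volume on the boundary and forces the symmetric differences to vanish, giving $\chi_{\Omega_+^{n_k}}\to\chi_{\Omega_+^\star}$ in $L^1(\Omega)$ and, along a further subsequence, almost everywhere. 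The detailed measure estimates are the standard ones of uniform-cone compactness theory, which I would cite rather than carry out.
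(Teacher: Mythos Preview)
Your sketch is essentially correct and follows the standard route to this compactness result: Blaschke selection for Hausdorff convergence of closures/boundaries/complements, Chenais-type stability of the uniform cone condition to keep the limit in $\mathcal{O}_{\text{ad}}$, and the uniform cone property again to pass from Hausdorff convergence to $L^1$-convergence of characteristic functions. The one point to watch is the inclusion $\overline{\Omega}_+^\star\subset\Omega$: nothing in the definition of $\mathcal{O}_{\text{ad}}$ keeps $\overline{\Omega}_+^n$ at a uniform positive distance from $\partial\Omega$, so the Hausdorff limit of the closures could a priori touch $\partial\Omega$. You assert this is ``inherited from \ldots the admissible-class constraint keeping shapes off $\partial\Omega$'', but that constraint is only $\overline{\Omega}_+^n\subset\Omega$ pointwise, not uniformly; strictly speaking one needs either a uniform separation hypothesis or to allow $\overline{\Omega}_+^\star\subset\overline\Omega$ in the admissible class.

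That said, the paper does not actually prove this theorem at all: it is stated as a quotation of a known compactness result, with a reference (Theorem~2 in \cite{GL}) and no proof. So your proposal is not a different route from the paper's proof---it \emph{is} a proof where the paper gives none. What you have written is essentially an outline of the Chenais-type argument underlying the cited result; the paper simply takes that result off the shelf.
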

\indent Here the set of characteristic functions is defined as $$Char(\Omega)=\{\one_{\Omega_+} :\Omega_+\subset \Omega \ \text{is measurable} \ \wedge  \one_{\Omega_+}(1-\one_{\Omega_+})=0 \ \text{a.e. in} \ \Omega \}$$
and convergence on this set first of all means pointwise almost everywhere convergence of functions, but due to Lebesgue's Dominated Convergence Theorem also convergence in $L^p(\Omega)$ for any $p\in[1,\infty)$.\\
\indent We first establish continuity of $u(\Omega_+)$ with respect to the shape of the lens $\Omega_+$.
\begin{proposition} \label{charact_cont}  Let $q \geq 1$, $q >d-1$ and the assumptions \eqref{coeff_3} on the coefficients in \eqref{ModWest} hold. 
Then the mapping $\one_{\Omega_+} \mapsto u(\Omega_+)$ is continuous from the set of characteristic functions $Char(\Omega)$ to $W^{1,\infty}(0,T;L^2(\Omega)) \cap W^{1,q+1}(0,T;W_0^{1,q+1}(\Omega))$.\\
\indent If additionally there exists $\varepsilon >0$ such that the solution $u^\sharp$ of \eqref{ModWest} with $\Omega_+=\Omega_+^\sharp$ satisfies 
\begin{align} \label{reg_condition_u}
\|u^\sharp\|_{W^{1,q+1}(0,T;W^{1,q+1+\varepsilon}(\Omega))} \leq C,
\end{align}
where $C$ depends only on $\Omega$ and the final time $T$, then the mapping $\one_{\Omega_+} \mapsto u(\Omega_+)$ is even H\"older continuous with exponent $\frac{1}{q}$ at $\one_{\Omega_+}^\sharp$ from the set of characteristic functions $Char(\Omega)$ in $L^{r_2}(\Omega)$, $r_2=\frac{(1+q)(1+\varepsilon)}{q\varepsilon}$ to $W^{1,\infty}(0,T;L^2(\Omega)) \cap W^{1,q+1}(0,T;W_0^{1,q+1}(\Omega))$.
\end{proposition}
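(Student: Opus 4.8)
The plan is to exploit that, under \eqref{coeff_3}, every coefficient $w\in\{\lambda,k,\varrho,b,\delta\}$ is an affine function of the design variable, $w=w_-+(w_+-w_-)\one_{\Omega_+}$, so that replacing $\Omega_+$ by $\Omega_+^n$ alters each coefficient only on the symmetric difference $D_n:=\Omega_+^n\triangle\Omega_+$, with $|D_n|=\|\one_{\Omega_+^n}-\one_{\Omega_+}\|_{L^1(\Omega)}=\|\one_{\Omega_+^n}-\one_{\Omega_+}\|_{L^{r}(\Omega)}^{r}$ for every $r\in[1,\infty)$ (since $|\one_A-\one_B|\in\{0,1\}$). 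First I would record that Proposition \ref{aa_corollary} applies uniformly along the sequence: the bounds $\underline w,\overline w$ and the embedding constants depend only on the fixed constants $w_\pm$ and on $\Omega$, not on $\Omega_+\in\mathcal{O}_{\mathrm{ad}}$, so for the fixed small data $(u_0,u_1)$ every $u_n:=u(\Omega_+^n)$ lies in $\cW$ with the same bounds, the non-degeneracy \eqref{avoid_degeneracy} holds uniformly, and \eqref{ModWest_energyest} holds with an $n$-independent constant; in particular $\dot u_n,\dot u\in L^\infty(0,T;L^\infty(\Omega))$ uniformly via $W_0^{1,q+1}(\Omega)\hookrightarrow L^\infty(\Omega)$.

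Then I set $\psi_n:=u_n-u$ (with $\psi_n|_{t=0}=\dot\psi_n|_{t=0}=0$, as the data are lens-independent) and subtract the two weak formulations \eqref{ModWest}. In each term I add and subtract to split off a factor carrying $\psi_n$ with the perturbed coefficients from a commutator carrying a coefficient jump $w_n-w$ against a fixed-solution quantity; e.g. $\frac{1}{\lambda_n}(1-2k_nu_n)\ddot u_n-\frac1\lambda(1-2ku)\ddot u=\frac{1}{\lambda_n}(1-2k_nu_n)\ddot\psi_n+\big[\tfrac{1}{\lambda_n}(1-2k_nu_n)-\tfrac1\lambda(1-2ku)\big]\ddot u$, and analogously for the elliptic, linear-damping, nonlinear-damping, and right-hand-side terms. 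Testing the error equation with $\dot\psi_n\,\one_{[0,t)}$ (rigorously in the Galerkin scheme, then in the limit, as in Proposition \ref{prop:adj}) and integrating the mass term by parts in time produces the coercive quantities $\tfrac12\int_\Omega\tfrac{1}{\lambda_n}(1-2k_nu_n)(\dot\psi_n)^2\,dx$ (positive by \eqref{avoid_degeneracy}), $\tfrac12\int_\Omega\tfrac{1}{\varrho_n}|\nabla\psi_n|^2\,dx$, $\int_\Omega b_n(1-\delta_n)|\nabla\dot\psi_n|^2\,dx$, and, crucially, $\int_\Omega b_n\delta_n\big(|\nabla\dot u_n|^{q-1}\nabla\dot u_n-|\nabla\dot u|^{q-1}\nabla\dot u\big)\cdot\nabla\dot\psi_n\,dx\ge\underline b\,\underline\delta\,2^{1-q}\int_\Omega|\nabla\dot\psi_n|^{q+1}\,dx$ by the monotonicity \eqref{aa_ineq5}. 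These control exactly the target norm $W^{1,\infty}(0,T;L^2(\Omega))\cap W^{1,q+1}(0,T;W_0^{1,q+1}(\Omega))$.

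The remaining terms split into two groups. The $\psi_n$-linear terms (e.g. $\int\tfrac{k_n}{\lambda_n}\dot u_n(\dot\psi_n)^2$, $\int\tfrac{k_n}{\lambda_n}\psi_n\ddot u\,\dot\psi_n$, and the linearized source $\int\tfrac{2k_n}{\lambda_n}(\dot u_n+\dot u)(\dot\psi_n)^2$) are absorbed using the uniform $L^\infty(L^\infty)$ bounds on $\dot u_n,\dot u$, Young's inequality \eqref{Young}, Poincar\'e, and smallness of $\bar m$, leaving a Gronwall term in $\|\dot\psi_n\|_{L^2}^2+\|\nabla\psi_n\|_{L^2}^2$. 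The commutator terms each carry a factor $\one_{D_n}$; for continuity it suffices that they tend to $0$. The delicate one is the nonlinear commutator $\int_0^t\!\int_{D_n}(b_n\delta_n-b\delta)|\nabla\dot u|^{q-1}\nabla\dot u\cdot\nabla\dot\psi_n$, bounded by $C\big(\int_0^T\!\int_{D_n}|\nabla\dot u|^{q+1}\big)^{q/(q+1)}\|\nabla\dot\psi_n\|_{L^{q+1}(0,T;L^{q+1})}$ and then absorbed by Young; since $|\nabla\dot u|^{q+1}\in L^1(\Omega\times(0,T))$ and $|D_n|\to0$, absolute continuity of the integral sends it to $0$, while the elliptic and linear-damping commutators vanish likewise. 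Gronwall's lemma then gives $\|\psi_n\|\to0$ in the target norm, proving continuity.

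For the H\"older statement I keep $u=u^\sharp$ fixed and quantify the same commutators by the higher integrability \eqref{reg_condition_u}, which yields $\nabla\dot u^\sharp\in L^{q+1}(0,T;L^{q+1+\varepsilon}(\Omega))$ and, via $\nabla u^\sharp(t)=\nabla u_0+\int_0^t\nabla\dot u^\sharp$, also $\nabla u^\sharp\in L^\infty(0,T;L^{q+1+\varepsilon}(\Omega))$. The binding term is again the nonlinear commutator: a threefold H\"older inequality with $\one_{D_n}\in L^{r_2}(\Omega)$, $|\nabla\dot u^\sharp|^{q}$ in $L^{(q+1+\varepsilon)/q}(\Omega)$ from \eqref{reg_condition_u}, and $\nabla\dot\psi_n\in L^{q+1}(\Omega)$ — whose reciprocal exponents $\tfrac{1}{r_2}+\tfrac{q}{q+1+\varepsilon}+\tfrac{1}{q+1}=1$ fix $r_2$ — followed by H\"older in time and \eqref{reg_condition_u}, bounds this term by $C\,\|\one_{\Omega_+^n}-\one_{\Omega_+}\|_{L^{r_2}(\Omega)}\,\|\nabla\dot\psi_n\|_{L^{q+1}(0,T;L^{q+1})}$ plus an absorbable piece. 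Since the coercive side controls $\|\nabla\dot\psi_n\|_{L^{q+1}(0,T;L^{q+1})}^{q+1}$, one Young step and a $(q+1)$-th root close the estimate with exponent $\tfrac1q$ in the $L^{r_2}$-metric. The main obstacle throughout is the nonlinear $q$-Laplace damping: its monotone part must supply the only available coercivity in $L^{q+1}$ for $\nabla\dot\psi_n$, while its coefficient-jump part has to be tamed on the thin set $D_n$, and it is exactly the balance between the coercive exponent $q+1$, the surplus integrability $\varepsilon$ of $\nabla\dot u^\sharp$, and $|D_n|=\|\one_{\Omega_+^n}-\one_{\Omega_+}\|_{L^{r_2}}^{r_2}$ that forces the choice of $r_2$ and the H\"older exponent $\tfrac1q$.
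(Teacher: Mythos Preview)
Your proposal is correct and follows essentially the same route as the paper: subtract the two weak formulations, test with $\dot\psi_n\one_{[0,t)}$, use the monotonicity \eqref{aa_ineq5} for coercivity of the $q$-Laplace difference, absorb the $\psi_n$-linear terms via the uniform $\cW$-bounds and smallness, and push the commutator terms to zero on the symmetric difference $D_n$. The paper phrases the vanishing of the nonlinear commutator via Lebesgue's Dominated Convergence Theorem rather than absolute continuity of the integral, but these are the same mechanism; your explicit Young step singling out the exponent $\tfrac{1}{q}$ is exactly what the paper leaves implicit in its final inequality. One small simplification: since $\dot\psi_n\in L^{q+1}(0,T;W_0^{1,q+1}(\Omega))\subset\tilde X$ by the uniform membership of $u_n,u^\sharp$ in $\cW$, you can test directly without passing through a Galerkin layer.
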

\begin{proof}
\noindent 
Let $\Omega^n_+$ be a sequence converging to $\Omega^\sharp_+$ in the sense of characteristic functions.
By subtracting the weak forms for $u^n$ and $u^\sharp$, corresponding to the domains with the lens regions $\Omega^n_+$ and $\Omega^\sharp_+$ respectively, we get
\begin{align*}
&\displaystyle \sum_{i \in \{+,- \}}\int_0^T \int_{\Omega} \{\frac{1}{\lambda_i}(1-2k_iu^n)(\ddot{u}^n-\ddot{u}^\sharp) \phi_i-\frac{2k_i}{\lambda_i}(u^n-u^\sharp)\ddot{u}^\sharp\phi_i+\frac{1}{\varrho_i}\nabla (u^n-u^\sharp) \cdot \nabla \phi_i\\
& + b_i(1-\delta_i)\nabla (\dot{u}^n-\dot{u}^\sharp) \cdot \nabla \phi_i 
 +b_i \delta_i (|\nabla \dot{u}^n|^{q-1} \nabla \dot{u}^n-|\nabla \dot{u}^\sharp|^{q-1} \nabla \dot{u}^\sharp) \cdot \nabla \phi\\
 &-\frac{2k_i}{\lambda_i}((\dot{u}^n)^2-(\dot{u}^\sharp_i)^2)\phi\}\, \one_{\Omega^n_i} \, dx \, ds\\
&=\displaystyle \sum_{i \in \{+,- \}}\int_0^T \int_{\Omega} \{\frac{1}{\lambda_i}(1-2k_iu^\sharp)\ddot{u}^\sharp \phi+\frac{1}{\varrho_i}\nabla u^\sharp \cdot \nabla \phi+ b_i(1-\delta_i)\nabla \dot{u}^\sharp \cdot \nabla \phi\\
&   +b_i \delta_i |\nabla \dot{u}^\sharp|^{q-1} \nabla \dot{u}^\sharp \cdot \nabla \phi
 -\frac{2k_i}{\lambda_i}(\dot{u}^\sharp)^2\phi\}\, (\one_{\Omega^n_i}-\one_{\Omega^\sharp_i}) \, dx \, ds,
\end{align*}
with $\Omega^n_-=\Omega \setminus \overline{\Omega}^n_+$, $\Omega^\sharp_-=\Omega \setminus \overline{\Omega}^\sharp_+$. Testing with $\phi=\dot{u}^n-\dot{u}^\sharp$ and employing inequality \eqref{aa_ineq5} for the difference of the $q$-Laplace terms then yields
\begin{align*}
&\frac{1}{2\underline{\lambda}} (1-2\bar{k}\|u^n\|_{L^{\infty}(0,T;L^{\infty}(\Omega))})\|\dot{u}^n-\dot{u}^\sharp\|^2_{L^\infty(0,T;L^2(\Omega))}+\frac{1}{2\underline{\varrho}}\|\nabla(u_n-u_m)\|^2_{L^2(0,T;L^2(\Omega))}\\
&+\underline{b}(1-\overline{\delta})\|\nabla(\dot{u}_n-\dot{u}_m)\|^2_{L^2(0,T;L^2(\Omega))}+2^{1-q}\underline{b}\hspace{0.08em}\underline{\delta}\|\nabla(\dot{u}^n-\dot{u}^\sharp)\|^{q+1}_{L^{q+1}(0,T;L^{q+1}(\Omega))}\\
\leq& \, \frac{2\overline{k}}{\underline{\lambda}}(\|\dot{u}^\sharp\|_{L^2(0,T;L^{\infty}(\Omega))}\|\dot{u}^n-\dot{u}^\sharp\|^2_{L^\infty(0,T;L^2(\Omega))}\\
&+(C_{H_0^{1},L^4}^\Omega)^2\|u^n-u^\sharp\|_{L^\infty(0,T;H_0^1(\Omega))}\|\ddot{u}^\sharp\|_{L^2(0,T;L^2(\Omega))}\|\dot{u}^n-\dot{u}^\sharp\|_{L^2(0,T;H_0^1(\Omega))}\\
&+\displaystyle \sum_{i \in \{+,- \}}\Bigl| \int_0^T \int_{\Omega} \Bigl\{\frac{1}{\lambda_i}(1-2k_iu^\sharp)\ddot{u}^\sharp (\dot{u}^n-\dot{u}^\sharp) +\frac{1}{\varrho_i}\nabla u^\sharp \cdot \nabla (\dot{u}^n-\dot{u}^\sharp)\\
& + b_i(1-\delta_i)\nabla \dot{u}^\sharp \cdot \nabla (\dot{u}^n-\dot{u}^\sharp) 
 +b_i\delta_i|\nabla \dot{u}^\sharp|^{q-1} \nabla \dot{u}^\sharp \cdot \nabla (\dot{u}^n-\dot{u}^\sharp)\\
 &-\frac{2k_i}{\lambda_i}(\dot{u}^\sharp)^2 (\dot{u}^n-\dot{u}^\sharp)\Bigr\} (\one_{\Omega^n_i}-\one_{\Omega^\sharp_i})\, dx \, ds\Bigr|.
\end{align*}
By employing the Sobolev embedding $W_0^{1,q+1}(\Omega) \hookrightarrow L^\infty(\Omega)$:  
$$\|\dot{u}^\sharp\|_{L^2(0,T;L^{\infty}(\Omega))} \leq C^{\Omega}_{W_0^{1,q+1},L^\infty}\|\dot{u}^\sharp\|_{L^2(0,T;W_0^{1,q+1}(\Omega))}$$
and then utilizing \eqref{ModWest_energyest} to estimate the terms $\|\dot{u}^\sharp\|_{L^2(0,T;W_0^{1,q+1}(\Omega))}$ and $\|\ddot{u}^\sharp\|_{L^2(0,T;L^2(\Omega))}$, we conclude that for sufficiently small initial data the two first terms on the right hand side can be absorbed by the appropriate terms on the left hand side. For the remaining terms, we employ the following estimates:
\begin{align*}
& \int_0^T \int_{\Omega} \Bigl\{\frac{1}{\lambda}(1-2k_iu^\sharp)\ddot{u}^\sharp (\dot{u}^n-\dot{u}^\sharp)(\one_{\Omega^n_i}-\one_{\Omega^\sharp_i})\, dx \, ds\\
\leq& \,(1+2\overline{k}\|u^\sharp\|_{L^{\infty}(0,T;L^{\infty}(\Omega))})\|\ddot{u}^\sharp\|_{L^2(0,T;L^2(\Omega))}\|\dot{u}_n-\dot{u}_m\|_{L^2(0,T;L^{q+1}(\Omega))}|\one_{\Omega^n_i}-\one_{\Omega^\sharp_i}|_{L^{r_1}(\Omega)},
\end{align*}
together with
\begin{align*}
&\int_0^T \int_{\Omega} \frac{2k_i}{\lambda_i}(\dot{u}^\sharp)^2 (\dot{u}^n-\dot{u}^\sharp)(\one_{\Omega^n_i}-\one_{\Omega^\sharp_i})\, dx \, ds \\
\leq& \, \frac{2\overline{k}}{\underline{\lambda}}\|\dot{u}^\sharp\|^2_{L^{\infty}(0,T;L^{\infty}(\Omega))}\|\dot{u}^n-\dot{u}^\sharp\|_{L^2(0,T;L^2(\Omega))}|\one_{\Omega^n_i}-\one_{\Omega^\sharp_i}|_{L^2(\Omega)},
\end{align*}
and the estimate
\begin{align*}
&\int_0^T \int_{\Omega}\frac{1}{\varrho}\nabla u^\sharp \cdot \nabla (\dot{u}^n-\dot{u}^\sharp)(\one_{\Omega^n_i}-\one_{\Omega^\sharp_i})\, dx \, ds\\
\leq& \, \frac{1}{\underline{\varrho}}\|\nabla u^\sharp\|_{L^2(0,T;L^2(\Omega))}\|\nabla(\dot{u}^n-\dot{u}^\sharp)\|_{L^{2}(0,T;L^{q+1}(\Omega))}|\one_{\Omega^n_i}-\one_{\Omega^\sharp_i}|_{L^{r}(\Omega)}\\ 
\leq& \, \frac{1}{\underline{\varrho}}T^{\frac{1}{r_1}}\|\nabla u^\sharp\|_{L^2(0,T;L^2(\Omega))}\|\nabla(\dot{u}^n-\dot{u}^\sharp)\|_{L^{q+1}(0,T;L^{q+1}(\Omega))}|\one_{\Omega^n_i}-\one_{\Omega^\sharp_i}|_{L^{r_1}(\Omega)},
\end{align*}
with $r_1=\frac{2(q+1)}{q-1}$. An analogous estimate to the last one can be derived for the $b_i(1-\delta_i)$-term where $\nabla u^\sharp$ is replaced by $\nabla \dot{u}^\sharp$. 
Thus all these terms on the right hand side tend to zero as $n\to\infty$.
\indent
Finally, by Lebesgue's Dominated Convergence Theorem also the integral 
\[
\int_0^T \int_{\Omega} b_i \delta_i|\nabla \dot{u}^\sharp|^{q-1} \nabla \dot{u}^\sharp \cdot \nabla (\dot{u}^n-\dot{u}^\sharp)(\one_{\Omega^n_i}-\one_{\Omega^\sharp_i})\, dx \, ds\,,
\]
whose integrand due to the factor $(\one_{\Omega^n_i}-\one_{\Omega^\sharp_i})$ tends to zero pointwise a.e. and is bounded by the integrable function 
$\overline{b}\hspace{0.08em}\overline{\delta} \|\nabla \dot{u}^\sharp\|^{q}_{L^{q+1}(0,T;L^{q+1+\varepsilon}(\Omega))}(\|\nabla\dot{u}^n\|_{L^{q+1}(0,T;L^{q+1}(\Omega))}+\|\nabla\dot{u}^\sharp\|_{L^{q+1}(0,T;L^{q+1}(\Omega))})\one_{\Omega}$,
goes to zero as $n\to\infty$, which proves the assertion.
\\
If we additionally assume that the solution of the state problem exhibits a slightly higher regularity in space than $W^{1,q+1}(\Omega)$, namely that $u \in W^{1,q+1}(0,T;W^{1,q+1+\varepsilon}(\Omega))$ for some $\varepsilon>0$, we can 
enhance the estimate on the latter term to
\begin{align*}
& \int_0^T \int_{\Omega} b_i \delta_i|\nabla \dot{u}^\sharp|^{q-1} \nabla \dot{u}^\sharp \cdot \nabla (\dot{u}^n-\dot{u}^\sharp)(\one_{\Omega^n_i}-\one_{\Omega^\sharp_i})\, dx \, ds\\
\leq& \,\overline{b}\hspace{0.08em}\overline{\delta} \|\nabla \dot{u}^\sharp\|^{q}_{L^{q+1}(0,T;L^{q+1+\varepsilon}(\Omega))}\|\nabla(\dot{u}^n-\dot{u}^\sharp)\|_{L^{q+1}(0,T;L^{q+1}(\Omega))}|\one_{\Omega^n_i}-\one_{\Omega^\sharp_i}|_{L^{r_2}(\Omega)},
\end{align*}
with $r_2=\frac{(1+q)(1+\varepsilon)}{q\varepsilon}$. Altogether, we can then conclude that
\begin{align*}
& (1-2\overline{k}\|u^n\|_{L^{\infty}(0,T;L^{\infty}(\Omega))})\|\dot{u}^n-\dot{u}^\sharp\|^2_{L^\infty(0,T;L^2(\Omega))}+\|\nabla(u^n-u^\sharp)\|^2_{L^2(0,T;L^2(\Omega))}\\
&+\|\nabla(\dot{u}^n-\dot{u}^\sharp)\|^2_{L^2(0,T;L^2(\Omega))}+\|\nabla(\dot{u}^n-\dot{u}^\sharp)\|^{q+1}_{L^{q+1}(0,T;L^{q+1}(\Omega))}\\
\leq& \, C((1+2\overline{k}\|u^\sharp\|_{L^{\infty}(0,T;L^{\infty}(\Omega))})\|\ddot{u}^\sharp\|_{L^2(0,T;L^2(\Omega))}\|\dot{u}^n-\dot{u}^\sharp\|_{L^2(0,T;L^{q+1}(\Omega))}|\one_{\Omega^n_+}-\one_{\Omega^\sharp_+}|_{L^{r_1}(\Omega)}\\
&+\|\nabla u^\sharp\|_{L^2(0,T;L^2(\Omega))}\|\nabla(\dot{u}^n-\dot{u}^\sharp)\|_{L^{q+1}(0,T;L^{q+1}(\Omega))}|\one_{\Omega^n_+}-\one_{\Omega^\sharp_+}|_{L^{r_1}(\Omega)}\\
&+\|\nabla \dot{u}^\sharp\|_{L^2(0,T;L^2(\Omega))}\|\nabla(\dot{u}^n-\dot{u}^\sharp)\|_{L^{q+1}(0,T;L^{q+1}(\Omega))}|\one_{\Omega^n_+}-\one_{\Omega^\sharp_+}|_{L^{r_1}(\Omega)}\\
&+\|\nabla \dot{u}^\sharp\|^{q}_{L^{q+1}(0,T;L^{q+1+\varepsilon}(\Omega))}\|\nabla(\dot{u}^n-\dot{u}^\sharp)\|_{L^{q+1}(0,T;L^{q+1}(\Omega))}|\one_{\Omega^n_+}-\one_{\Omega^\sharp_+}|_{L^{r_2}(\Omega)}\\
&+\|u^n\|^2_{L^{\infty}(0,T;L^{\infty}(\Omega))}\|\dot{u}_n-\dot{u}_m\|_{L^2(0,T;L^2(\Omega))}|\one_{\Omega^n_+}-\one_{\Omega^\sharp_+}|_{L^2(\Omega)}),
\end{align*}
for sufficiently large $C>0$ independent of $n$, which implies H\"older continuity of the mapping $\one_{\Omega_+} \mapsto u(\Omega_+)$. 
\end{proof}
Now let $\Omega^n_+$  be a minimizing sequence for \eqref{shape_opt_problem}. Due to Theorem \ref{minimizers}, there exists a subsequence, which for brevity we still denote $\Omega^n_+$, that converges to some $\Omega^\star_+ \in \mathcal{O}$. By extracting another sequence, we may as well assume that $\Omega^n_+ \rightarrow \Omega^\star_+$ in the sense of characteristic functions. \\
\indent Let us denote by $u^n$ the weak solution corresponding to the domain where the lens region is given by $\Omega^n_+$. We know then that $u^n$ satisfies the estimate \eqref{ModWest_energyest}, where $u$ is interchanged with $u^n$. This means that we may extract a subsequence, again denoted $u^n$, such that 
\begin{align*}
& u^n \rightharpoonup u^{\star} \ \text{in} \ H^{2}(0,T;L^2(\Omega)), \\
& \nabla \dot{u}^n \rightharpoonup \nabla \dot{u}^{\star} \ \text{ in} \  L^{q+1}(0,T;L^{q+1}(\Omega)).
\end{align*}
Due to the embedding $H^2(0,T) \hookrightarrow C^1(0,T)$, this further implies that $u(t) \rightharpoonup u^\star(t)$ and  $\dot{u}(t) \rightharpoonup \dot{u}^\star(t)$ in $L^2(\Omega)$ for all $t \in [0,T]$. Therefore, $(u^\star,\dot{u}^\star)\vert_{t=0}=(u_0,u_1)$.
\indent It remains to show that $u^\star$ solves the state problem on the domain whose lens region is given by $\Omega^\star_+$. 
\begin{proposition}
Let the assumptions of Proposition  \ref{charact_cont} be satisfied. Let $\Omega^n_+$ be a minimizing sequence for the shape optimization problem \eqref{sho_problem} and let $\Omega^\star_+$ be an accumulation point of this sequence in accordance with Theorem \ref{minimizers}. Then the sequence $u^n$ corresponding to the domain with the lens region $\Omega^n_+$ converges strongly to $u^\star$ in $W^{1,\infty}(0,T;L^2(\Omega)) \cap W^{1,q+1}(0,T;W_0^{1,q+1}(\Omega))$, where $u^\star$ is the solution of \eqref{ModWest} in the domain whose lens region is given by $\Omega^\star_+$.
\end{proposition}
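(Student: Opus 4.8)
The plan is to obtain both the identification of $u^\star$ and the asserted strong convergence as an immediate consequence of the continuity statement in Proposition \ref{charact_cont}, so that no separate limit passage in the nonlinear $q$-Laplace term is needed. First I would record that, by Theorem \ref{minimizers}, the accumulation point satisfies $\Omega^\star_+ \in \mathcal{O}_{\text{ad}}$; hence the local well-posedness result Proposition \ref{aa_corollary} furnishes a unique solution $\tilde{u}:=u(\Omega^\star_+) \in \cW$ of \eqref{ModWest} on the domain whose lens region is $\Omega^\star_+$, together with the uniform energy estimate \eqref{ModWest_energyest}. The point I would stress is that, under \eqref{coeff_3}, the smallness threshold $\kappa_T$ on the initial data and all constants entering Proposition \ref{aa_corollary} depend only on $\Omega$ and on the uniform coefficient bounds in \eqref{coeff_3}, and not on the particular subdomain $\Omega_+$; consequently existence and the bound \eqref{ModWest_energyest} hold simultaneously for every member of the minimizing sequence and for the limit shape.

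Next I would check that the hypotheses of Proposition \ref{charact_cont} are in force with $\Omega^\sharp_+=\Omega^\star_+$. By Theorem \ref{minimizers} the convergence $\Omega^n_+ \to \Omega^\star_+$ holds in the sense of characteristic functions, i.e. $\one_{\Omega^n_+} \to \one_{\Omega^\star_+}$ pointwise almost everywhere; since $0 \le \one_{\Omega^n_+} \le 1$, Lebesgue's Dominated Convergence Theorem upgrades this to convergence in $L^p(\Omega)$ for every $p \in [1,\infty)$, in particular in the exponents $L^{r_1}(\Omega)$ and $L^{r_2}(\Omega)$ occurring in the proof of Proposition \ref{charact_cont}. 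Applying that proposition then yields the strong convergence
\[
u^n \longrightarrow \tilde{u} \quad \text{in} \quad W^{1,\infty}(0,T;L^2(\Omega)) \cap W^{1,q+1}(0,T;W_0^{1,q+1}(\Omega)),
\]
which is exactly the claimed convergence, once $\tilde u$ is identified with $u^\star$.

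Finally I would close the argument by uniqueness of limits. The strong convergence above implies in particular $u^n \rightharpoonup \tilde{u}$ in $H^2(0,T;L^2(\Omega))$ and $\nabla\dot{u}^n \rightharpoonup \nabla\dot{\tilde{u}}$ in $L^{q+1}(0,T;L^{q+1}(\Omega))$; comparing with the weak limits $u^n \rightharpoonup u^\star$ and $\nabla\dot{u}^n \rightharpoonup \nabla\dot{u}^\star$ already extracted just before the statement, uniqueness of the weak limit forces $\tilde{u}=u^\star$. Therefore $u^\star=u(\Omega^\star_+)$ is the unique solution of \eqref{ModWest} on the domain with lens region $\Omega^\star_+$, and the strong convergence $u^n \to u^\star$ in the stated space follows. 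The only genuinely delicate point is the one flagged in the first step, namely the shape-independence of the constants and of the smallness threshold, which is what makes Proposition \ref{charact_cont} applicable uniformly along the sequence and, in particular, legitimizes the absorption-into-the-left-hand-side step in its proof; all the hard analysis involving the $q$-Laplace damping has already been carried out there, so here it suffices to verify that its hypotheses transfer to the limit configuration.
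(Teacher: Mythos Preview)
Your argument is correct and, in fact, more direct than the paper's own proof. One small imprecision: strong convergence in $W^{1,\infty}(0,T;L^2(\Omega))\cap W^{1,q+1}(0,T;W_0^{1,q+1}(\Omega))$ does not by itself imply weak convergence in the stronger space $H^2(0,T;L^2(\Omega))$. The identification $\tilde{u}=u^\star$ should instead be argued in a common weaker topology: your strong convergence yields $u^n\to\tilde{u}$ in $L^2(0,T;L^2(\Omega))$, while $u^n\rightharpoonup u^\star$ in $H^2(0,T;L^2(\Omega))$ also gives $u^n\rightharpoonup u^\star$ in $L^2(0,T;L^2(\Omega))$; uniqueness of the weak $L^2$-limit then forces $\tilde{u}=u^\star$.

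The paper takes a different route: it writes out the residual of the weak formulation for $u^\star$ with lens $\Omega_+^\star$ and splits it into a sum of $(u^\star-u^n)$-difference terms and $(\one_{\Omega_i^\star}-\one_{\Omega_i^n})$-terms, then shows both vanish. For the nonlinear $q$-Laplace difference this requires the strong convergence $\|\nabla(\dot u^\star-\dot u^n)\|_{L^{q+1}(0,T;L^{q+1}(\Omega))}\to 0$, which is precisely what Proposition~\ref{charact_cont} delivers once $u^\star$ is identified with $u(\Omega_+^\star)$; so the paper's computation ultimately rests on the same continuity result you invoke. Your approach bypasses the explicit residual identity by observing that Proposition~\ref{charact_cont} together with uniqueness of weak limits already pins down $u^\star$, which is cleaner; the paper's approach, on the other hand, makes the mechanism of passing to the limit in each term of the weak form visible.
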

\begin{proof}
In order to see that $u^\star$ is the weak solution of the state problem in the domain where the lens region is given by $\Omega^\star_+$, note that due to the fact that $u^n$ solves \eqref{ModWest} with lens region $\Omega^n_+$ and using integration by parts in the first term on the right hand side
\begin{align} \label{prop:ex_min}
&\displaystyle \sum_{i \in \{+,-\}}\int_0^T \int_{\Omega} \{\frac{1}{\lambda_i}(1-2k_iu^\star)\ddot{u}^\star \phi+\frac{1}{\varrho_i}\nabla u^\star \cdot \nabla \phi + b_i(1-\delta_i)\nabla \dot{u}^\star \cdot \nabla \phi \nn \\
& +b_i\delta_i|\nabla \dot{u}^\star|^{q-1} \nabla \dot{u}^\star\cdot \nabla \phi-\frac{2k_i}{\lambda_i}(\dot{u}^\star)^2 \phi\} \one_{\Omega^\star_i}\, dx \, ds \nn \\
=
&\,\displaystyle \sum_{i \in \{+,-\}} \int_0^T \int_{\Omega} \{-\frac{1}{\lambda_i}(1-2k_iu^\star)(\dot{u}^\star-\dot{u}^n) \dot{\phi}-\frac{2k_i}{\lambda_i}(u^\star-u^n)\ddot{u}^n \phi+\frac{1}{\varrho_i}\nabla (u^\star-u^n) \cdot \nabla \phi \nn \\
& + b_i(1-\delta_i)\nabla (\dot{u}^\star-u^n) \cdot \nabla \phi 
 +b_i \delta_i(|\nabla \dot{u}^\star|^{q-1} \nabla \dot{u}^\star-|\nabla \dot{u}^n|^{q-1} \nabla \dot{u}^n)\cdot \nabla \phi\\
&-\frac{2k_i}{\lambda_i}(\dot{u}^\star-\dot{u}^n)(\dot{u}^\star+\dot{u}^n) \phi\}\, \one_{\Omega^\star_i} \, dx \, ds \nn \\
&+\displaystyle \sum_{i \in \{+,-\}}\int_0^T \int_{\Omega} \{\frac{1}{\lambda_i}(1-2k_iu^n)\ddot{u}^n \phi+\frac{1}{\varrho_i}\nabla u^n \cdot \nabla \phi + b_i(1-\delta_i)\nabla \dot{u}^n \cdot \nabla \phi \nn \\
& +b_i\delta_i|\nabla \dot{u}_n|^{q-1} \nabla \dot{u}_n \cdot \nabla \phi-\frac{2k_i}{\lambda_i}(\dot{u}^n)^2 \phi\} (\one_{\Omega^\star_i}-\one_{\Omega^n_i})\, dx \, ds, \nn
\end{align}
for all $\phi \in C^\infty(0,T; C_0^\infty(\Omega))$, $\phi(T)=0$. The difference of the $q$-Laplace terms on the right hand side can be estimated with the help of inequality \eqref{aa_ineq3} (with $\eta=0$) as follows:
\begin{align*}
&\int_0^T \int_{\Omega} b_i \delta_i(|\nabla \dot{u}^\star|^{q-1} \nabla \dot{u}^\star-|\nabla \dot{u}^n|^{q-1} \nabla \dot{u}^n)\cdot \nabla \phi\one_{\Omega^\star_i} \, dx \, ds \\
\leq& \, C_q\overline{b}\hspace{0.08em}\overline{\delta} \int_0^T \int_{\Omega} |\nabla (\dot{u}^\star-\dot{u}^n)|(|\nabla \dot{u}^\star|^{q-1}+|\nabla \dot{u}^n|^{q-1})|\nabla \phi||\one_{\Omega^\star_i}|\, dx \, ds \\
\leq&\,C_q\overline{b}\hspace{0.08em}\overline{\delta}\|\nabla (\dot{u}^\star-\dot{u}^n)\|_{L^{q+1}(0,T;L^{q+1}(\Omega))}
\Bigl(\|\nabla \dot{u}^\star\|^{q-1}_{L^{q+1}(0,T;L^{q+1}(\Omega))}\\
&+\|\nabla \dot{u}^n\|^{q-1}_{L^{q+1}(0,T;L^{q+1}(\Omega))}\Bigr)\|\nabla \phi\|_{L^{q+1}(0,T;L^{q+1}(\Omega))}.
\end{align*} The remaining terms can be estimated analogously to the estimates in the proof of Proposition \ref{charact_cont}, from which it then follows that the right hand side in \eqref{prop:ex_min} tends to zero as $n \rightarrow \infty$.
\end{proof}
\begin{theorem} The shape optimization problem \eqref{shape_opt_problem} has a solution.
\end{theorem}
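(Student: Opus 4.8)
The plan is to apply the direct method of the calculus of variations, following the Bolzano--Weierstrass strategy announced above: I would combine the compactness of the admissible set $\mathcal{O}_{\text{ad}}$ from Theorem \ref{minimizers} with the continuity of the control-to-state map and of the cost functional, so as to show that the infimum in \eqref{shape_opt_problem} is attained. Since $J(u,\Omega_+)=\int_0^T\int_\Omega (u-u_d)^2\,dx\,ds \geq 0$, the value
\[
m:=\inf\{\,J(u,\Omega_+) : \Omega_+\in\mathcal{O}_{\text{ad}},\ u=u(\Omega_+)\,\}\in[0,\infty)
\]
is a well-defined nonnegative real number, and I would begin by selecting a minimizing sequence $\Omega^n_+\in\mathcal{O}_{\text{ad}}$ with associated states $u^n=u(\Omega^n_+)$ such that $J(u^n,\Omega^n_+)\to m$.

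Next I would invoke the compactness result Theorem \ref{minimizers} to extract a subsequence (not relabeled) together with a limit $\Omega^\star_+\in\mathcal{O}_{\text{ad}}$ satisfying $\one_{\Omega^n_+}\to\one_{\Omega^\star_+}$ in the sense of characteristic functions, hence in every $L^p(\Omega)$ with $p<\infty$. By the preceding proposition the corresponding states then converge strongly, $u^n\to u^\star$ in $W^{1,\infty}(0,T;L^2(\Omega))\cap W^{1,q+1}(0,T;W_0^{1,q+1}(\Omega))$, where $u^\star=u(\Omega^\star_+)$ is the weak solution of \eqref{ModWest} on the domain whose lens region is $\Omega^\star_+$. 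In particular $u^\star$ is an admissible state for the shape $\Omega^\star_+$, so the pair $(u^\star,\Omega^\star_+)$ is feasible for \eqref{shape_opt_problem}.

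Finally, since $J$ depends on $u$ only through the $L^2(0,T;L^2(\Omega))$-norm of $u-u_d$, and the embedding $W^{1,\infty}(0,T;L^2(\Omega))\hookrightarrow L^2(0,T;L^2(\Omega))$ is continuous, the strong convergence $u^n\to u^\star$ yields $J(u^n,\Omega^n_+)\to J(u^\star,\Omega^\star_+)$ by continuity of the norm. Passing to the limit along the minimizing subsequence gives $J(u^\star,\Omega^\star_+)=m$, so that $(u^\star,\Omega^\star_+)$ realizes the infimum and is a minimizer, which proves the assertion.

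The substantive content has in fact already been absorbed into the two preceding propositions, and the genuine obstacle lies there rather than in this wrap-up: it is the strong convergence of the states $u^n\to u^\star$ in the presence of the degenerate nonlinear $q$-Laplace damping term. That step is handled via the monotonicity/coercivity inequality \eqref{aa_ineq5} to control the difference of the $q$-Laplace terms, together with the slight extra spatial integrability \eqref{reg_condition_u} that renders the critical interface integral involving $(\one_{\Omega^n_i}-\one_{\Omega^\star_i})$ convergent. Once that convergence is secured, the lower semicontinuity one would typically need in a direct-method argument is upgraded here to full continuity of $J$, and attainment of the minimum follows immediately.
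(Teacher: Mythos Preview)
Your argument is correct and essentially identical to the paper's: both combine the compactness of $\mathcal{O}_{\text{ad}}$ (Theorem \ref{minimizers}) with the continuity of the control-to-state map (Propositions \ref{charact_cont} and the subsequent one) to obtain continuity of the reduced cost functional and then apply Bolzano--Weierstrass. One minor remark on your closing commentary: the extra spatial integrability \eqref{reg_condition_u} is only used in the paper for the \emph{H\"older} continuity upgrade, whereas plain continuity of $\one_{\Omega_+}\mapsto u(\Omega_+)$ (which is all that is needed here) follows from Lebesgue's Dominated Convergence Theorem applied to the critical $q$-Laplace interface term, without that extra assumption.
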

\begin{proof}
Let us define the reduced cost functional $\hat{J}:\mathcal{O}_{\text{ad}} \rightarrow \mathbb{R}$:
 \begin{align*}
 \hat{J}(\Omega_+)=J(u(\Omega_+), \Omega_+).
\end{align*} 
Let $\Omega^n_+ \rightarrow \Omega^\star_+$ as $n \rightarrow \infty$. It can be shown (cf. Lemma 3.3, \cite{KK}) that 
\begin{align*}
|J(u^\star,\Omega^\star_{+})-J(u^n,\Omega^n_{+})| \leq \|u^\star-u^n\|_{L^2(0,T;L^2(\Omega))}\|u^\star+u^n-2u_d\|_{L^2(0,T;L^2(\Omega))}.
\end{align*}
Since the term $\|u^\star+u^n-2u_d\|_{L^2(0,T;L^2(\Omega))}$ is uniformly bounded due to \eqref{ModWest_energyest}, by employing Proposition \ref{charact_cont} we achieve that the right hand side tends to zero as $n \rightarrow \infty$. Therefore the cost functional $\hat{J}$ is continuous on $\mathcal{O}_{\text{ad}}$.  According to the Bolzano-Weierstrass theorem, since $\mathcal{O}_{\text{ad}}$ is compact, $J$ attains a global minimum on $\mathcal{O}_{ad}$.
\end{proof}
\section{State equation on the domain with perturbed lens} \label{state_perturbed} 
\subsection{Method of mappings.} The approach to computing the shape derivative that we will take follows the general framework given in \cite{IKP08} and its extension to a time dependent setting given in \cite{KP}. One of its main ingredients is the mapping method, originally introduced by Murat and Simon in \cite{MS}, which we briefly recall. \\
\indent We introduce a fixed vector field $h \in C^{1,1}(\bar{\Omega},\mathbb{R}^d)$ with $h\vert_{\partial \Omega}=0$, and a family of transformations $F_\tau: \Omega \mapsto \mathbb{R}^d$
\begin{align*}
F_\tau=id+\tau h, \ \text{for} \ \tau \in \mathbb{R}.
\end{align*}
There exists $\tau_0>0$, such that for $|\tau|<\tau_0$, $F_\tau$ is a $C^{1,1}$-diffeomorphism (cf. \cite{DZ}). If the perturbed lens is given by $\Omega_{+,\tau}=F_\tau(\Omega_+)$ and $\Gamma_\tau=F_\tau(\Gamma)$, then it follows that $\Gamma_\tau$ is strongly Lipschitz continuous (cf. Theorem 4.1, \cite{Hofmann}). \\
\indent The Eulerian derivative of $J$ at $\Omega_+$ in the direction of the vector field $h$ is defined as
$$dJ(u,\Omega_+)h=\displaystyle \lim_{\tau \rightarrow 0} \frac{1}{\tau}(J(u_\tau,\Omega_{+,\tau})-J(u,\Omega_+)),$$
\noindent where $u_\tau$ satisfies the state equation on the perturbed domain $\Omega_\tau$. The functional $J$ is said to be shape differentiable at $\Omega_+$ if $dJ(u,\Omega_+)h$ exists for all $h \in C^{1,1}(\bar{\Omega},\mathbb{R}^d)$ and defines a continuous linear functional on $C^{1,1}(\bar{\Omega},\mathbb{R}^d)$. \\
We introduce the following notation:
\begin{equation}\label{defIAw}
I_\tau=\text{det}(DF_\tau)\,, \quad A_\tau=(DF_\tau)^{-T}\, .
\end{equation}
where $DF_\tau$ is the Jacobian of the transformation $F_\tau$.
\begin{lemma} \label{IKP08_eq:2dot8}\cite{IKP08} For sufficiently small $\tau_0>0$, mapping $F_\tau$ has the following properties:
\begin{equation*}
\begin{aligned}
&\tau \mapsto F_\tau\in C^1(-\tau_0,\tau_0;C^1(\overline{\Omega},\R^d))
&&\tau\mapsto A_\tau\in C(-\tau_0,\tau_0;C(\overline{\Omega},\R^{d\times d}))\\
&\tau\mapsto F_\tau\in C(-\tau_0,\tau_0;C^{1,1}(\overline{\Omega},\R^d))	
&&\tau\mapsto I_\tau\in C^1(-\tau_0,\tau_0;C(\overline{\Omega}))\\
&\tau\mapsto F_\tau^{-1}\in C(-\tau_0,\tau_0;C^1(\overline{\Omega},\R^d)) 		
&&F_0=\text{id} \\
&\frac{d}{d\tau} F_\tau\vert_{\tau=0}=h		&&\frac{d}{d\tau} F_\tau^{-1}\vert_{\tau=0}=-h\\
&\frac{d}{d\tau} DF_\tau\vert_{\tau=0}=Dh	&&\frac{d}{d\tau} DF_\tau^{-1}\vert_{\tau=0}=\frac{d}{d\tau} (A_\tau)^T\vert_{\tau=0}=-Dh\\
&\frac{d}{d\tau} I_\tau\vert_{\tau=0}=\div h.&& \frac{d}{d\tau} A_\tau\vert_{\tau=0} =-(Dh)^T.
\end{aligned}
\end{equation*}
\end{lemma}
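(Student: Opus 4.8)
The plan is to exploit that $F_\tau=\text{id}+\tau h$ is \emph{affine} in $\tau$, so that $F_\tau$, $DF_\tau$ and their $\tau$-derivatives come for free, and only the inverse map requires genuine work. Since $DF_\tau=I+\tau Dh$ with $h\in C^{1,1}(\bar\Omega,\R^d)$, the assignments $\tau\mapsto F_\tau$ and $\tau\mapsto DF_\tau$ are polynomial (hence $C^\infty$, in particular $C^1$) with values in $C^1(\bar\Omega,\R^d)$ and $C^0(\bar\Omega,\R^{d\times d})$ respectively; differentiating the explicit expressions gives $F_0=\text{id}$, $\frac{d}{d\tau}F_\tau=h$ and $\frac{d}{d\tau}DF_\tau=Dh$, both independent of $\tau$. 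Continuity (in fact Lipschitz continuity) of $\tau\mapsto F_\tau$ into $C^{1,1}(\bar\Omega,\R^d)$ is immediate from $F_\tau-F_{\tau'}=(\tau-\tau')h$ with $h\in C^{1,1}(\bar\Omega,\R^d)$.

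Next I would fix the admissible range of $\tau$. Setting $L:=\|Dh\|_{C^0(\bar\Omega)}$ and choosing $\tau_0>0$ with $\tau_0 L<1$ guarantees $\|\tau Dh\|<1$ uniformly on $\bar\Omega$ for $|\tau|<\tau_0$, so $DF_\tau=I+\tau Dh$ is invertible there with Neumann series $(DF_\tau)^{-1}=\sum_{n\ge0}(-\tau Dh)^n$ converging uniformly on $\bar\Omega$. This yields at once that $\tau\mapsto A_\tau=(DF_\tau)^{-T}$ is continuous (indeed real-analytic) into $C^0(\bar\Omega,\R^{d\times d})$, and differentiating the series — or using $\frac{d}{d\tau}M^{-1}=-M^{-1}\dot M\,M^{-1}$ with $M|_{\tau=0}=I$, $\dot M=Dh$ — gives $\frac{d}{d\tau}(DF_\tau)^{-1}|_{\tau=0}=-Dh$ and hence $\frac{d}{d\tau}A_\tau|_{\tau=0}=-(Dh)^T$. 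For $I_\tau=\det(DF_\tau)=\det(I+\tau Dh)$ the determinant is polynomial in the matrix entries, so $\tau\mapsto I_\tau$ is $C^\infty$ into $C^0(\bar\Omega)$; Jacobi's formula $\frac{d}{d\tau}\det M=\det M\cdot\operatorname{tr}(M^{-1}\dot M)$ evaluated at $\tau=0$ gives $\frac{d}{d\tau}I_\tau|_{\tau=0}=\operatorname{tr}(Dh)=\div h$.

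The only genuinely implicit object is $F_\tau^{-1}$. Existence and $C^{1,1}$-regularity of $F_\tau^{-1}$ for $|\tau|<\tau_0$ are already recorded before the statement. For the derivative at $\tau=0$ I would use the fixed-point identity $F_\tau^{-1}(y)=y-\tau\,h(F_\tau^{-1}(y))$, which follows from $F_\tau\circ F_\tau^{-1}=\text{id}$; dividing by $\tau$ gives
\begin{equation*}
\tfrac{1}{\tau}\bigl(F_\tau^{-1}(y)-y\bigr)=-h(F_\tau^{-1}(y)),
\end{equation*}
and since $\|F_\tau^{-1}-\text{id}\|_{C^0(\bar\Omega)}\le |\tau|\,\|h\|_{C^0(\bar\Omega)}/(1-\tau_0 L)$ forces $F_\tau^{-1}(y)\to y$ uniformly while $h$ is continuous, the right-hand side converges to $-h$ in $C^0(\bar\Omega,\R^d)$, yielding $\frac{d}{d\tau}F_\tau^{-1}|_{\tau=0}=-h$. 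The continuity $\tau\mapsto F_\tau^{-1}\in C(-\tau_0,\tau_0;C^1(\bar\Omega,\R^d))$ I would then obtain from $DF_\tau^{-1}(y)=(I+\tau Dh(F_\tau^{-1}(y)))^{-1}$ together with the just-established uniform continuity of $\tau\mapsto F_\tau^{-1}$ and the Neumann-series continuity of matrix inversion. A cleaner, unified route is the implicit function theorem applied to $\Phi(\tau,G):=G+\tau\,(h\circ G)-\text{id}$ on $\mathbb{R}\times C^1(\bar\Omega,\R^d)$, where $\Phi(0,\text{id})=0$ and $\partial_G\Phi(0,\text{id})=\mathrm{Id}$ is boundedly invertible, producing $\tau\mapsto F_\tau^{-1}$ of class $C^1$ with $\partial_\tau F_\tau^{-1}|_{\tau=0}=-h$ in one stroke. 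I expect the main obstacle to be the rigorous verification of the smoothness of the composition (superposition) operator $G\mapsto h\circ G$ in the $C^1$ topology, which is delicate and is exactly where the regularity $h\in C^{1,1}(\bar\Omega,\R^d)$ enters; by contrast, all the explicit objects $F_\tau$, $DF_\tau$, $I_\tau$, $A_\tau$ reduce to elementary computation with the affine map and the Neumann series for its Jacobian.
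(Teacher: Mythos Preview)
The paper does not give its own proof of this lemma; it simply cites \cite{IKP08}. Your argument is correct and is exactly the standard route used in the shape-calculus literature (including \cite{IKP08}, \cite{SZ}, \cite{DZ}): exploit the affine structure $F_\tau=\text{id}+\tau h$ to read off $F_\tau$, $DF_\tau$ and their $\tau$-derivatives directly, invert $DF_\tau=I+\tau Dh$ by a Neumann series for $|\tau|$ small to handle $A_\tau$ and $(DF_\tau)^{-1}$, apply Jacobi's formula for $I_\tau$, and treat $F_\tau^{-1}$ either via the fixed-point identity $F_\tau^{-1}=\text{id}-\tau\,h\circ F_\tau^{-1}$ or the implicit function theorem in $C^1(\bar\Omega,\R^d)$. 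You also correctly flag the one genuinely delicate step, namely the differentiability of the superposition operator $G\mapsto h\circ G$ in the $C^1$ topology, which is precisely where the assumption $h\in C^{1,1}(\bar\Omega,\R^d)$ is used; this is the reason the references work with $C^{1,1}$ vector fields rather than merely $C^1$ ones. There is nothing to correct.
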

\noindent As a consequence of Lemma \ref{IKP08_eq:2dot8}, there exist $\alpha_0, \alpha_1 >0$ such that 
\begin{align}\label{uniform_bound_I}
&0<\alpha_0 \leq I_\tau(x) \leq \alpha_1,  \quad \quad  \text{for} \  x \in \overline{\Omega}, \ \tau \in [-\tau_0,\tau_0].
\end{align}
Furthermore, there exist $\beta_1, \beta_2>0$ such that
\begin{align} \label{uniform_bound_A}
&  |A_\tau|_{L^\infty(\Omega)} \leq \beta_1,\quad |A^{-1}_\tau|_{L^\infty(\Omega)} \leq \beta_2, \quad  \text{for}  \ \tau \in [-\tau_0,\tau_0].
\end{align}
\noindent We will often employ the following lemma which gives us the formula for the transformation of domain integrals:
\begin{lemma} \label{transf_lemma} \cite{SZ} Let $\varphi_\tau \in L^1(\Omega_\tau)$, then $\varphi_\tau \circ F_\tau \in L^1(\Omega)$ and
$$\int_{\Omega_\tau} \varphi_\tau \, dx_\tau = \int_{\Omega} (\varphi_\tau \circ F_\tau) \, I_\tau \, dx.$$
\end{lemma}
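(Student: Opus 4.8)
The plan is to prove the identity by the standard measure-theoretic ladder: verify it first for indicator functions, extend by linearity to simple functions, then pass to nonnegative measurable functions via monotone convergence, and finally treat general $L^1$ functions by splitting into positive and negative parts. Two preliminary facts make everything go through. By Lemma \ref{IKP08_eq:2dot8}, $F_\tau:\Omega\to\Omega_\tau$ is a $C^{1,1}$-diffeomorphism for $|\tau|<\tau_0$; in particular it is bi-Lipschitz, so (by the Luzin $N$ property of Lipschitz maps) both $F_\tau$ and $F_\tau^{-1}$ send null sets to null sets and map Lebesgue-measurable sets to Lebesgue-measurable sets, whence $\varphi_\tau\circ F_\tau$ is measurable on $\Omega$ whenever $\varphi_\tau$ is measurable on $\Omega_\tau$. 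Moreover, by \eqref{uniform_bound_I} we have $0<\alpha_0\le I_\tau\le\alpha_1$ on $\overline{\Omega}$, so that $I_\tau=|I_\tau|=|\det(DF_\tau)|$ and the right-hand integrand is sign-definite.

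The base case is the change-of-variables formula for the Lebesgue measure itself. For a measurable set $E\subset\Omega$ the classical transformation theorem for $C^1$ (indeed Lipschitz) diffeomorphisms gives
\[
|F_\tau(E)|=\int_E |\det(DF_\tau)|\,dx=\int_E I_\tau\,dx.
\]
Writing $E_\tau=F_\tau(E)\subset\Omega_\tau$ and $\varphi_\tau=\one_{E_\tau}$, and noting $\one_{E_\tau}\circ F_\tau=\one_E$, this is precisely
\[
\int_{\Omega_\tau}\one_{E_\tau}\,dx_\tau=\int_{\Omega}(\one_{E_\tau}\circ F_\tau)\,I_\tau\,dx .
\]
By linearity the identity then holds for every nonnegative simple function on $\Omega_\tau$.

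Next I would extend to the nonnegative measurable case. For $\varphi_\tau\ge 0$ pick a nondecreasing sequence of simple functions $s_n\uparrow\varphi_\tau$ pointwise; then $s_n\circ F_\tau\uparrow\varphi_\tau\circ F_\tau$ pointwise, and since $I_\tau\ge 0$ the integrands $(s_n\circ F_\tau)\,I_\tau$ increase monotonically, so the monotone convergence theorem applied to both sides of the simple-function identity yields the formula for $\varphi_\tau\ge 0$. For general $\varphi_\tau\in L^1(\Omega_\tau)$ I would decompose $\varphi_\tau=\varphi_\tau^+-\varphi_\tau^-$ and apply the nonnegative case to each part. The integrability of $\varphi_\tau\circ F_\tau$ on $\Omega$, which also legitimizes subtracting the two finite integrals, follows from the lower bound $I_\tau\ge\alpha_0$ together with the formula applied to $|\varphi_\tau|$:
\[
\int_\Omega |\varphi_\tau\circ F_\tau|\,dx\le\frac{1}{\alpha_0}\int_\Omega |\varphi_\tau\circ F_\tau|\,I_\tau\,dx=\frac{1}{\alpha_0}\int_{\Omega_\tau}|\varphi_\tau|\,dx_\tau<\infty .
\]

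The only genuinely nontrivial ingredient is the base case, and it is exactly where the regularity of $F_\tau$ is used: the $C^{1,1}$ (hence $C^1$) character guarantees simultaneously that $I_\tau$ is the correct density and that measurability and null sets are preserved under $F_\tau$ and $F_\tau^{-1}$. Everything beyond this is the routine monotone-class extension, so I would expect no serious obstacle here — indeed the authors avoid it entirely by invoking the classical transformation theorem of \cite{SZ}.
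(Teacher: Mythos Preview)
Your argument is correct and is the standard measure-theoretic proof of the change-of-variables formula for $C^1$ (or bi-Lipschitz) diffeomorphisms. The paper, however, does not prove this lemma at all: it is stated with a direct citation to \cite{SZ} and no proof is given. So there is nothing to compare against beyond noting that you have supplied what the authors deliberately outsourced to the reference.
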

\vspace{2mm}
\indent Let us introduce, for some Lipschitz domain $\Omega_+ \in \mathcal{O}_{\text{ad}}$, the operator $E(\cdot,\Omega_+): \cW \rightarrow \tilde{X}^\star$ by
\begin{equation*}
\begin{aligned}
\langle E(u,\Omega_+), \phi \rangle_{\tilde{X}^{\star},\tilde{X}} =& \,\displaystyle \sum_{i \in \{+,-\}} \int_0^T \int_{\Omega_i} \{\frac{1}{\lambda_i}(1-2k_iu)\ddot{u} \phi+\frac{1}{\varrho_i}\nabla u \cdot \nabla \phi + b_i(1-\delta_i)\nabla \dot{u} \cdot \nabla \phi \\
&+b_i\delta_i|\nabla \dot{u}|^{q-1} \nabla \dot{u} \cdot\nabla \phi -\frac{2k_i}{\lambda_i}(\dot{u})^2 \phi\} \, dx \, ds.
\end{aligned}
\end{equation*}
\noindent Fix $\tau \in (-\tau_0,\tau_0)$. Proposition \ref{aa_corollary} guarantees that $E(u, \Omega_{+,\tau})=0$ has a unique solution, which we denote $u_\tau: \Omega \rightarrow \mathbb{R}$.
\noindent We transport $u_\tau$ back to the domain with the fixed lens $\Omega_+$ by defining $u^\tau: \Omega \rightarrow \mathbb{R}$ as 
\begin{align*}
u^\tau=u_\tau \circ F_\tau.
\end{align*}
Differentiability of $h$ implies that $u^\tau \in X$. 
We further have
\begin{equation}\label{Etil} 
\begin{aligned} 
&\langle E(u_\tau,\Omega_{+,\tau}), \phi_\tau \rangle_{\tilde{X}_\tau^{\star},\tilde{X}_\tau} \\
=& \, \displaystyle \sum_{i \in \{+,-\}} \int_0^T \int_{\Omega_{i,\tau}}  \Bigl\{\frac{1}{\lambda_i}(1-2k_i u_\tau)\ddot{u}_\tau \phi_\tau+\frac{1}{\varrho_i}\nabla u_\tau \cdot \nabla \phi_\tau\\
& + b_i(1-\delta_i)\nabla \dot{u}_\tau \cdot \nabla \phi_\tau 
+b_i\delta_i|\nabla \dot{u}_\tau|^{q-1} \nabla \dot{u}_\tau \cdot \nabla \phi_\tau 
-\frac{2k_i}{\lambda_i}(\dot{u}_\tau)^2 \phi_\tau \Bigr\} \, dx_\tau \, ds  \\
=& \displaystyle \sum_{i \in \{+,-\}} \, \int_0^T \int_{\Omega_i} \Bigl\{\frac{1}{\lambda_i}(1-2k_i u^\tau)\ddot{u}^\tau \phi^\tau+\frac{1}{\varrho_i}A_\tau\nabla u^\tau \cdot A_\tau\nabla \phi^\tau
+ b_i(1-\delta_i)A_\tau\nabla \dot{u}^\tau \cdot A_\tau\nabla \phi^\tau \\
&+b_i \delta_i|A_\tau\nabla \dot{u}^\tau|^{q-1} A_\tau\nabla \dot{u}^\tau \cdot A_\tau\nabla \phi^\tau -\frac{2k^\tau}{\lambda_i}(\dot{u}^\tau)^2 \phi^\tau \Bigr\} \, I_\tau \, dx \, ds  \\
\equiv & \ \langle \tilde{E}(u^\tau,\tau), \phi^\tau \rangle_{\tilde{X}^{\star},\tilde{X}},
\end{aligned}
\end{equation}
for any $\phi_\tau \in \tilde{X}_\tau=\tilde{X}$, with $\Omega_{-,\tau}=\Omega \setminus \overline{\Omega}_{+,\tau}$, where we have used Lemma \ref{transf_lemma} and the fact that $\nabla u_\tau=A_\tau \nabla u^\tau\circ F_\tau^{-1}$. Therefore, for sufficiently small $|\tau|$, $u^\tau$ uniquely satisfies an equation on the domain with the fixed lens:
\begin{align} \label{equation}
\tilde{E}(u^\tau,\tau)=0.
\end{align}
\noindent Since $F_0=id$, we have that $u^0=u$ and $\tilde{E}(u,0)=E(u,\Omega_+  )$. 
\subsection{Continuity of the state with respect to domain perturbations}\noindent We will now focus our attention on the speed of convergence of $u^\tau$ to $u$ as $\tau \rightarrow 0$ and prove two properties which together correspond to hypothesis (H2) in \cite{IKP08} (see also Proposition 3.1 in \cite{IKP}). \\
\indent We begin with the question of uniform boundedness of $u^\tau$ with respect to $\tau \in (-\tau_0,\tau_0)$. Since
\begin{align*}
\|u^\tau\|_{L^{\infty}(0,T;L^{\infty}(\Omega))}=&\, \|u_\tau \circ F_\tau\|_{L^{\infty}(0,T;L^{\infty}(\Omega))} \\
\leq& \, \|u_\tau\|_{L^{\infty}(0,T;L^{\infty}(\Omega))},
\end{align*}
we also have that
\begin{align} \label{avoid_degeneracy_1}
1-a_0 <\|1-2ku^\tau\|_{L^{\infty}(0,T;L^{\infty}(\Omega))}<1+a_0,
\end{align}
where $a_0$ is defined as in \eqref{avoid_degeneracy}. 
\begin{proposition} \label{uniform_bound} Let $q \geq 1$, $q>d-1$ and assumptions \eqref{coeff_3} hold.
Solutions $u^\tau$ of \eqref{equation} are uniformly bounded in $W^{1,\infty}(0,T;L^2(\Omega)) \cap W^{1,q+1}(0,T;W^{1,q+1}_0(\Omega))$ for $\tau \in (-\tau_0,\tau_0)$.
\end{proposition}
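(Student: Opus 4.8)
The plan is to reproduce the energy estimate underlying Proposition~\ref{aa_corollary} directly on the transported equation~\eqref{equation}, now carrying the Jacobian factors $I_\tau$ and $A_\tau$ through every term and checking that each constant can be chosen independently of $\tau$. Concretely, I would test \eqref{equation}, in the transported representation~\eqref{Etil}, with $\phi^\tau=\dot u^\tau\one_{[0,t)}$, first in a Galerkin-discretized setting and then passing to the weak limit exactly as in the proof of Proposition~\ref{prop:adj}. Because $A_\tau$ and $I_\tau$ are independent of time, the first and second terms integrate by parts in $t$ to produce the time derivative of an energy,
\[
\frac12\frac{d}{dt}\sum_{i\in\{+,-\}}\int_{\Omega_i}\Bigl\{\tfrac{1}{\lambda_i}(1-2k_iu^\tau)(\dot u^\tau)^2+\tfrac{1}{\varrho_i}|A_\tau\nabla u^\tau|^2\Bigr\}I_\tau\,dx,
\]
together with a cubic remainder $\int_{\Omega_i}\tfrac{k_i}{\lambda_i}(\dot u^\tau)^3I_\tau\,dx$ coming from differentiating the degeneracy factor $1-2k_iu^\tau$; meanwhile the two damping terms become $b_i(1-\delta_i)|A_\tau\nabla\dot u^\tau|^2I_\tau$ and $b_i\delta_i|A_\tau\nabla\dot u^\tau|^{q+1}I_\tau$, both nonnegative, and the nonlinear source term tested with $\dot u^\tau$ contributes a further cubic term which combines with the remainder so that a single indefinite quantity of the form $\int_{\Omega_i}\tfrac{k_i}{\lambda_i}(\dot u^\tau)^3I_\tau\,dx$ survives on the right-hand side.

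The decisive point is that all coercivity constants are uniform in $\tau$. By~\eqref{avoid_degeneracy_1} the factor $1-2ku^\tau$ lies in $(1-a_0,1+a_0)$ with $a_0$ independent of $\tau$; by~\eqref{uniform_bound_I} one has $\alpha_0\le I_\tau\le\alpha_1$; and by~\eqref{uniform_bound_A} the bound $|A_\tau^{-1}|_{L^\infty(\Omega)}\le\beta_2$ yields $|A_\tau\xi|\ge\beta_2^{-1}|\xi|$ pointwise. Hence the first energy term controls $\|\dot u^\tau\|^2_{L^\infty(0,T;L^2(\Omega))}$ from below, the term $\tfrac1{\varrho_i}|A_\tau\nabla u^\tau|^2I_\tau$ controls $\|\nabla u^\tau\|^2_{L^\infty(0,T;L^2(\Omega))}$, and the two damping terms control $\|\nabla\dot u^\tau\|^2_{L^2(0,T;L^2(\Omega))}$ and $\|\nabla\dot u^\tau\|^{q+1}_{L^{q+1}(0,T;L^{q+1}(\Omega))}$, in each case with $\tau$-independent constants depending only on the coefficients and on $\alpha_0,\alpha_1,\beta_2$. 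The indefinite cubic term is then estimated precisely as in the derivation of~\eqref{ModWest_energyest}, using the embedding $H_0^1(\Omega)\hookrightarrow L^4(\Omega)$ together with Young's inequality~\eqref{Young} to split off arbitrarily small multiples of the coercive quantities and a lower-order remainder controlled by the energy itself. Taking $\esssup_{t\in[0,T]}$ and applying Gronwall's inequality closes the estimate, and the remaining norms $\|u^\tau\|_{L^\infty(0,T;L^2(\Omega))}$ and the $L^{q+1}$-in-time gradient norms of $u^\tau$ follow by writing $u^\tau(t)=u^\tau(0)+\int_0^t\dot u^\tau\,ds$ and invoking Poincar\'e's inequality.

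Finally, the $\tau$-dependence of the data enters only through the transported initial values $u^\tau|_{t=0}=u_0\circ F_\tau$ and $\dot u^\tau|_{t=0}=u_1\circ F_\tau$; by the chain rule $\nabla(u_0\circ F_\tau)=(DF_\tau)^{T}\,(\nabla u_0)\circ F_\tau$ and the change-of-variables Lemma~\ref{transf_lemma}, these are bounded in $W_0^{1,q+1}(\Omega)$ uniformly in $\tau$ thanks to the uniform bounds on $DF_\tau$ and $I_\tau$ from Lemma~\ref{IKP08_eq:2dot8}. Combining this with the $\tau$-independent constants above yields a bound on $\|u^\tau\|_{W^{1,\infty}(0,T;L^2(\Omega))\cap W^{1,q+1}(0,T;W_0^{1,q+1}(\Omega))}$ that is uniform over $\tau\in(-\tau_0,\tau_0)$. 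The main obstacle is organizational rather than conceptual: one must verify that the smallness thresholds on $T$, on the data, and on $\bar m$ that were used to absorb the indefinite cubic term in Proposition~\ref{aa_corollary} can be met with $\tau$-independent margins. This is exactly where the uniform bounds~\eqref{uniform_bound_I}--\eqref{uniform_bound_A} are indispensable, since they keep the Jacobian factors from degrading the coercivity as $\tau$ varies, so that a single choice of smallness secures the estimate for all perturbed problems simultaneously.
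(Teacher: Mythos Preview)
Your overall plan—test the transported equation~\eqref{Etil} with $\dot u^\tau\one_{[0,t)}$ and exploit the uniform bounds \eqref{uniform_bound_I}--\eqref{uniform_bound_A} for coercivity—matches the paper's, and your lower bounds on the energy and dissipation are correct. The gap is in your treatment of the cubic term $\int_\Omega\tfrac{k}{\lambda}(\dot u^\tau)^3 I_\tau\,dx$. With the embedding $H_0^1\hookrightarrow L^4$ this becomes, up to constants, $|\dot u^\tau|_{L^2}|\nabla\dot u^\tau|^2_{L^2}$, and no application of Young's inequality splits this into a small multiple of the dissipation plus a Gronwall-integrable remainder: the factor $|\dot u^\tau|_{L^2}$ is precisely the quantity you are trying to bound, so the inequality is genuinely nonlinear and does not close by Gronwall alone. (The derivation of \eqref{ModWest_energyest} in \cite{BKR13} proceeds by a fixed-point argument in the ball $\mathcal W$, not by Young plus Gronwall.)

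The paper sidesteps this. It first observes that $u_\tau\in\mathcal W$ by Proposition~\ref{aa_corollary} applied on the domain with lens $\Omega_{+,\tau}$; the constants $\bar m,\bar M$ there depend only on $\Omega$, the fixed coefficient values and the initial data, hence are $\tau$-independent. Pulling this back through $F_\tau$ gives the a~priori bound \eqref{cont_est}, namely $\|\nabla\dot u^\tau\|_{L^{q+1}(0,T;L^{q+1}(\Omega))}\le C\bar M$ uniformly in $\tau$. The cubic term is then estimated via $W_0^{1,q+1}\hookrightarrow L^\infty$ (not $L^4$) as $\|\dot u^\tau\|_{L^2(0,T;L^\infty)}\sqrt T\,\|\dot u^\tau\|^2_{L^\infty(0,T;L^2)}$, and the first factor is bounded by $C^\Omega_{W_0^{1,q+1},L^\infty}T^{\frac{q-1}{2(q+1)}}\cdot C\bar M$ via \eqref{cont_est}. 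For $\bar M$ and $T$ small this coefficient is absorbed directly into the left-hand side—no bootstrap, no Gronwall. Incidentally, the $I_\tau$-weighted energy at $t=0$ transforms back \emph{exactly} to $|u_1|^2_{L^2(\Omega)}$ and $|\nabla u_0|^2_{L^2(\Omega)}$ by Lemma~\ref{transf_lemma} together with $A_\tau\nabla(u_0\circ F_\tau)=(\nabla u_0)\circ F_\tau$, so your separate chain-rule argument for the transported initial data is not needed.
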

\begin{proof}
It can be shown (cf. Lemma 3.3, \cite{IKP}) that
\begin{align*} 
\|\nabla \dot{u}^\tau\|_{L^{q+1}(0,T;L^{q+1}(\Omega))} \leq& \, \frac{1+\tau_0|Dh|_{L^\infty(\Omega)}}{\alpha_0^{1/(q+1)}}\|\nabla \dot{u}_\tau\|_{L^{q+1}(0,T;L^{q+1}(\Omega))}.
\end{align*} 
with $\alpha_0$ as in \eqref{uniform_bound_I}
\noindent This implies that, for $u_\tau \in \cW$,  we can estimate
\begin{align} \label{cont_est}
\|\nabla \dot{u}^\tau\|_{L^{q+1}(0,T;L^{q+1}(\Omega))} \leq \frac{1+\tau_0|Dh|_{L^\infty(\Omega)}}{\alpha_0^{1/(q+1)}}\bar{M}.
\end{align}
Testing \eqref{equation} with $\tilde{E}$ as in \eqref{Etil} with $\phi^\tau=\dot{u}^\tau\one_{[0,t)} \in L^2(0,T;W_0^{1,q+1}(\Omega))$ yields
\begin{align*}
&\frac{1}{2}\Bigl[\int_{\Omega} \frac{1}{\lambda}(1-2k u^\tau)(\dot{u}^\tau)^2I_\tau \, dx\Bigr]_0^t+\frac{1}{2}\Bigl[ \int_\Omega \frac{1}{\varrho}|A_\tau \nabla u^\tau|^2  I_\tau \,dx \Bigr]_0^t \\
&+\int_0^t \int_\Omega b(1-\delta)|A_\tau \nabla \dot{u}^\tau|^2 I_\tau\, dx \, ds+\int_0^t \int_\Omega b \delta|A_\tau \nabla \dot{u}^\tau|^{q+1} I_\tau\, dx \, ds \\
=&\, \int_0^t \int_\Omega \frac{k}{\lambda}(\dot{u}^\tau)^3 I_\tau\, dx \, ds.
\end{align*}
By taking the supremum over $t \in [0,T]$ and by utilizing the uniform boundedness properties \eqref{uniform_bound_I}, \eqref{uniform_bound_A} and estimates \eqref{avoid_degeneracy_1} and \eqref{cont_est}, we find that
\begin{align}\label{lower_est}
&\frac{1}{4}\frac{\alpha_0}{\overline{\lambda}}(1-a_0)\|\dot{u}^\tau\|^2_{L^\infty(0,T;L^2(\Omega))}+\frac{1}{4}\frac{1}{\overline{\lambda}}\frac{1}{\beta_2^2} \alpha_0 \|\nabla u^\tau\|^2_{L^\infty(0,T;L^2(\Omega))} \nn\\
&+\frac{1}{2}\underline{b}(1-\overline{\delta})\frac{1}{\beta_2^2} \alpha_0\|\nabla \dot{u}^\tau\|^2_{L^2(0,T;L^2(\Omega))}+\frac{1}{2}\underline{b}\hspace{0.08em}\underline{\delta}\frac{1}{\beta_2^{q+1}}\alpha_0\|\nabla \dot{u}^\tau\|^{q+1}_{L^{q+1}(0,T;L^{q+1}(\Omega))} \nn \\
\leq& \, \frac{\overline{k}}{\underline{\lambda}}\alpha_1\|\dot{u}^\tau\|_{L^2(0,T;L^\infty(\Omega))}\sqrt{T}\|\dot{u}^\tau\|^2_{L^\infty(0,T;L^2(\Omega))}+\frac{1}{2\underline{\lambda}}(1+a_0)\alpha_1|u_1|^2_{L^2(\Omega)}\nn \\
&+\frac{1}{2\underline{\varrho}}\alpha_1|\nabla u_0|^2_{L^2(\Omega)}\\
\leq& \,\frac{\overline{k}}{\underline{\lambda}}\alpha_1 C^\Omega_{W_0^{1,q+1},L^\infty}T^{\frac{q}{q+1}}\frac{1+\tau_0|Dh|_{L^\infty(\Omega)}}{\alpha_0^{1/(q+1)}}\bar{M}\|\dot{u}^\tau\|^2_{L^\infty(0,T;L^2(\Omega))}+\frac{1}{2\underline{\lambda}}(1+a_0)\alpha_1|u_1|^2_{L^2(\Omega)}\nn\\
&+\frac{1}{2\underline{\varrho}}\alpha_1|\nabla u_0|^2_{L^2(\Omega)}. \nn
\end{align}
From here, for sufficiently small $\bar{M}$ and $T$, we can achieve that the first term on the right hand side gets absorbed by the appropriate term on the left hand side. Thus we have uniform boundedness of $u^\tau$ in $W^{1,\infty}(0,T;L^2(\Omega)) \cap W^{1,q+1}(0,T;W^{1,q+1}_0(\Omega))$, $|\tau| <\tau_0$. 
\end{proof}
\noindent The H\" older continuity of $u$ with respect to domain perturbations is established by our next theorem.
\begin{proposition} \label{prop_h2}
Let $q \geq 1$, $q >d-1$ and let assumptions \eqref{coeff_3} hold. Then 
\begin{equation} \label{prop_h2_limit}
\begin{aligned}
&\displaystyle \lim_{\tau \rightarrow 0}\frac{1}{\tau} \Bigl(\|\dot{u}^\tau-\dot{u}\|^2_{L^{\infty}(0,T;L^2(\Omega))}+\|\nabla (u^\tau-u)\|^2_{L^{\infty}(0,T;L^2(\Omega))} \\
&+\|\nabla (\dot{u}^\tau-\dot{u})\|^2_{L^{2}(0,T;L^2(\Omega))} +\|\nabla (\dot{u}^\tau-\dot{u})\|^{q+1}_{L^{q+1}(0,T;L^{q+1}(\Omega))} \Bigr) = 0.
\end{aligned}
\end{equation}
\end{proposition}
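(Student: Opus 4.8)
The plan is to derive an energy estimate for the difference $u^\tau-u$ by subtracting the two weak formulations $\tilde E(u^\tau,\tau)=0$ in \eqref{equation} and $\tilde E(u,0)=E(u,\Omega_+)=0$, and then testing the resulting identity with $\phi^\tau=(\dot u^\tau-\dot u)\one_{[0,t)}\in L^2(0,T;W_0^{1,q+1}(\Omega))$, exactly as in the proofs of Propositions \ref{aa_corollary} and \ref{uniform_bound}. The essential bookkeeping step is to split every term of $\langle\tilde E(u^\tau,\tau)-\tilde E(u,0),\phi^\tau\rangle$ into a \emph{difference part}, in which only $u^\tau-u$ and its derivatives appear while the geometric factors $A_\tau,I_\tau$ are frozen, and a \emph{perturbation part}, which collects the contributions of $A_\tau-\mathrm{Id}$ and $I_\tau-1$ evaluated at the fixed solution $u$. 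By Lemma \ref{IKP08_eq:2dot8} the maps $\tau\mapsto A_\tau$ and $\tau\mapsto I_\tau$ are $C^1$ with $A_0=\mathrm{Id}$, $I_0=1$, so each perturbation part is genuinely $O(\tau)$ uniformly in $x$.

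For the difference parts I would proceed as in the energy estimate \eqref{lower_est}. Integrating the inertial contribution $\tfrac1{\lambda_i}(1-2k_iu^\tau)(\ddot u^\tau-\ddot u)\,I_\tau\,(\dot u^\tau-\dot u)$ by parts in time (note $I_\tau$ is independent of the physical time) produces $\tfrac12\tfrac{d}{dt}\big[(1-2k_iu^\tau)(\dot u^\tau-\dot u)^2\big]I_\tau$ plus a lower-order term $\tfrac{k_i}{\lambda_i}\dot u^\tau(\dot u^\tau-\dot u)^2I_\tau$ that is controlled via \eqref{avoid_degeneracy_1} and smallness of the data; the $\tfrac1{\varrho_i}$- and $b_i(1-\delta_i)$-terms yield the coercive quantities $\|\nabla(u^\tau-u)\|^2_{L^\infty(0,T;L^2)}$ and $\|\nabla(\dot u^\tau-\dot u)\|^2_{L^2(0,T;L^2)}$ after using \eqref{uniform_bound_I}--\eqref{uniform_bound_A}. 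For the $q$-Laplace term I would add and subtract $b_i\delta_i|A_\tau\nabla\dot u|^{q-1}A_\tau\nabla\dot u\cdot A_\tau\nabla\phi^\tau\,I_\tau$, so that the monotonicity inequality \eqref{aa_ineq5} applied to $A_\tau\nabla\dot u^\tau$ and $A_\tau\nabla\dot u$ gives, after using $I_\tau\ge\alpha_0$ and $|A_\tau^{-1}|\le\beta_2$, the coercive lower bound $\gtrsim \alpha_0\,2^{1-q}\beta_2^{-(q+1)}\|\nabla(\dot u^\tau-\dot u)\|^{q+1}_{L^{q+1}(0,T;L^{q+1})}$. Together these reproduce the four quantities on the left-hand side of \eqref{prop_h2_limit}.

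The perturbation parts are then estimated using the uniform bounds on $u^\tau$ from Proposition \ref{uniform_bound} and the regularity of $u$ from Proposition \ref{aa_corollary}. The linear perturbation terms are $O(\tau)$, and tested against $\nabla(\dot u^\tau-\dot u)$ in $L^2$ they contribute, by Cauchy--Schwarz and Young's inequality, $\le\varepsilon\|\nabla(\dot u^\tau-\dot u)\|^2_{L^2(0,T;L^2)}+C\tau^2$, the first piece being absorbed on the left. The delicate term is the $q$-Laplace perturbation $b_i\delta_i\big(I_\tau A_\tau^T|A_\tau\nabla\dot u|^{q-1}A_\tau\nabla\dot u-|\nabla\dot u|^{q-1}\nabla\dot u\big)\cdot\nabla\phi^\tau$: since $\xi\mapsto I_\tau A_\tau^T|A_\tau\xi|^{q-1}A_\tau\xi$ is positively homogeneous of degree $q$ in $\xi$ and $C^1$ in $\tau$ with value $|\xi|^{q-1}\xi$ at $\tau=0$, the representation formula \eqref{aa_formula} together with the growth inequalities \eqref{aa_ineq3} and \eqref{aa_ineq4_b} bound the prefactor by $C\tau\,|\nabla\dot u|^{q}$. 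Hence, by Hölder's inequality with exponents $\tfrac{q+1}{q}$ and $q+1$, this term is $\le C\tau\,\|\nabla\dot u\|^{q}_{L^{q+1}(0,T;L^{q+1})}\,\|\nabla(\dot u^\tau-\dot u)\|_{L^{q+1}(0,T;L^{q+1})}$.

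Collecting everything and invoking Young's inequality \eqref{Young} once more with $r=q+1$ to split $C\tau\,\|\nabla(\dot u^\tau-\dot u)\|_{L^{q+1}(0,T;L^{q+1})}\le\varepsilon\|\nabla(\dot u^\tau-\dot u)\|^{q+1}_{L^{q+1}(0,T;L^{q+1})}+C_\varepsilon\,|\tau|^{(q+1)/q}$, and absorbing the $\varepsilon$-terms on the left after fixing $\bar m$ and $T$ small as in Proposition \ref{uniform_bound}, I obtain an estimate of the form $(\text{sum of the four quantities})\le C\,|\tau|^{(q+1)/q}$, where the genuinely non-small coupling term $-\tfrac{2k_i}{\lambda_i}(u^\tau-u)\ddot u\,(\dot u^\tau-\dot u)$ is handled exactly as its analogue in the proof of Proposition \ref{charact_cont}, i.e. absorbed into the left-hand side using $\|\ddot u\|_{L^2(0,T;L^2)}$, Poincaré's inequality and smallness of the data. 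Since $(q+1)/q>1$, dividing by $\tau$ yields a bound $\lesssim |\tau|^{1/q}\to0$, which is precisely \eqref{prop_h2_limit}. The main obstacle is the nonlinear $q$-Laplace perturbation term: both verifying that its geometric prefactor is $O(\tau)$ with the correct degree-$q$ growth in $\nabla\dot u$, and arranging the Young splitting so that the $L^{q+1}$-power quantity (which decays only like $|\tau|^{(q+1)/q}$ and thereby dictates the Hölder exponent $1/q$) can still be absorbed, is where the analysis concentrates.
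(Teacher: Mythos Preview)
Your argument is correct and yields the statement; it is, however, organised differently from the paper's proof, and the comparison is worth making explicit.

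\textbf{Where you split vs.\ where the paper splits.} You add and subtract at the \emph{fixed} solution $u$, so your ``difference part'' carries the perturbed geometric factors $A_\tau,I_\tau$ (and coercivity comes from \eqref{aa_ineq5} applied to $A_\tau\nabla\dot u^\tau$ and $A_\tau\nabla\dot u$, together with $I_\tau\ge\alpha_0$ and $|A_\tau^{-1}|\le\beta_2$), while your ``perturbation part'' is evaluated at $u$ and only requires the a priori bounds of Proposition~\ref{aa_corollary}. The paper instead writes the difference part with the \emph{unperturbed} operators, cf.\ \eqref{weakform_difference}, and pushes all of $A_\tau-I$, $I_\tau-1$ into a right-hand side $f_i$ evaluated at $u^\tau$; this then needs the uniform bounds on $u^\tau$ from Proposition~\ref{uniform_bound}.

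\textbf{One-step vs.\ two-step.} Your use of Young's inequality with $r=q+1$ produces a closed estimate
\[
\|\dot u^\tau-\dot u\|^2_{L^\infty L^2}+\|\nabla(u^\tau-u)\|^2_{L^\infty L^2}+\|\nabla(\dot u^\tau-\dot u)\|^2_{L^2L^2}+\|\nabla(\dot u^\tau-\dot u)\|^{q+1}_{L^{q+1}L^{q+1}}\le C\,|\tau|^{(q+1)/q},
\]
which immediately gives \eqref{prop_h2_limit} after dividing by $\tau$. The paper proceeds in two steps: it first applies Young's inequality to the estimate \eqref{convergence1}--\eqref{ineq_qL1} to obtain plain convergence $u^\tau\to u$ (their \eqref{convergence2}), and only then divides the \emph{original} inequality \eqref{convergence1} by $\tau$, exploiting that the right-hand side factors as $(|I_\tau-1|+|A_\tau-I|)\cdot(\text{bounded})\cdot\|v^\tau\|$, so that boundedness of the first factor over $\tau$ (Lemma~\ref{IKP08_eq:2dot8}) combined with $\|v^\tau\|\to0$ yields the result. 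Your route is more direct and, as you note, gives the explicit H\"older rate $|\tau|^{1/q}$; the paper's bootstrap is purely qualitative but avoids having to track the precise $\tau$-power through the $q$-Laplace perturbation.
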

\begin{proof}
Note that the difference $v^\tau=u-u^\tau$ satisfies
\begin{equation}   \label{weakform_difference}
\begin{aligned}
 &\displaystyle \sum_{i \in \{+,-\}}\int_0^T \int_{\Omega_i} \Bigl\{\frac{1}{\lambda_i}(1-2k_iu^\tau_i)\ddot{v}^\tau_i\phi_i I_\tau-\frac{2k_i}{\lambda_i}v^\tau_i\ddot{u}_i\phi_iI_\tau +\frac{1}{\varrho_{i}}\nabla v^\tau_{i}\cdot \nabla \phi_i  \\
&
+b_{i}(1-\delta_{i})\nabla \dot{v}^\tau_{i}\cdot \nabla \phi_i+b_{i}\delta_{i}(|\nabla \dot{u}_{i}|^{q-1}\nabla \dot{u}_i-|\nabla \dot{u}^{\tau}_i|^{q-1}\nabla \dot{u}^{\tau}_i)\cdot \nabla \phi_i  \\
&-\frac{2k_i}{\lambda_i}(\dot{u}_i+\dot{u}^\tau_i)\dot{v}^\tau_i\phi_i\Bigr\} \, dx \, ds \\
=& \, \langle f_+, \phi_+ \rangle_{\tilde{X}^\star_+,\tilde{X}_+}+\langle f_-, \phi_- \rangle_{\tilde{X}^\star_-,\tilde{X}_-},
\end{aligned}
\end{equation}
for all $\phi\in\tilde{X}$, with the two terms on the right hand side given by 
\begin{align*}
&\langle f_i, \phi_i \rangle_{\tilde{X}^\star_i,\tilde{X}_i} \\
 =&\, \int_0^T \int_{\Omega_i} \Bigl\{\frac{1}{\lambda_i}(I_\tau-1)(1-2k_i u_i)\ddot{u}_i\phi_i 
+\frac{1}{\varrho_i}\Bigl((I_\tau-1)A_\tau\nabla u^{\tau}_i \cdot A_\tau \nabla \phi_i\\
&+(A_\tau-I)\nabla u^\tau_i \cdot  A_\tau \nabla \phi_i +\nabla u^{\tau}_i \cdot (A_\tau-I)\nabla \phi_i\Bigr) \\
&+b_i(1-\delta_i)\Bigl((I_\tau-1)A_\tau\nabla \dot{u}^\tau_i \cdot A_\tau \nabla \phi_i+(A_\tau-I)\nabla \dot{u}^\tau_i \cdot A_\tau \nabla \phi_i  
+ \nabla \dot{u}^\tau_i \cdot (A_\tau-I)\nabla \phi_i\Bigr)\\
&+b_i\delta_i\Bigl((|A_\tau\nabla \dot{u}^\tau_i|^{q-1}A_\tau\nabla \dot{u}^\tau_i-|\nabla \dot{u}^\tau_i|^{q-1}\nabla \dot{u}^\tau_i)\cdot \nabla \phi_i
+|A_\tau\nabla \dot{u}^\tau_i|^{q-1}A_\tau\nabla \dot{u}^\tau_i \cdot (A_\tau-I) \nabla \phi_i\\
&+(I_\tau-1) |A_\tau\nabla \dot{u}^\tau_i|^{q-1}A_\tau\nabla \dot{u}^\tau_i \cdot A_\tau\nabla \phi_i \Bigr)
-(I_\tau-1)\frac{2k_i}{\lambda_i}(\dot{u}_i^\tau)^2\phi_i\Bigr\} \, dx \, ds,
\end{align*}
\noindent where $\tilde{X}_i=L^2(0,T;W^{1,q+1}(\Omega_i))$. 
Since $\phi_i=\dot{v}^\tau_i\one_{[0,t)}\in\tilde{X}$, we can use it
as test functions in \eqref{weakform_difference}, which together with the uniform boundedness of $I_\tau$ results in 
\begin{align*}
& \displaystyle \sum_{i \in \{+,-\}} \Bigl\{\Bigl[\frac{1}{2}\alpha_0\int_{\Omega_i}\frac{1}{\lambda_i}(1-2k_iu^\tau_i)(\dot{v}^\tau_i)^2 \, dx+\frac{1}{2}\int_{\Omega_i} \frac{1}{\varrho_i}|\nabla v^\tau_i|^2 \, dx\Bigr]_0^t\\
&  +\int_0^t \int_{\Omega_i} b_i(1-\delta_i)|\nabla \dot{v}^\tau_i|^2 \, dx \, ds +\int_0^t \int_{\Omega_i} b_i\delta_i 2^{1-q} \, |\nabla \dot{v}^\tau_i|^{q+1} \, dx \, ds\Bigr\}\\
\leq& \, \displaystyle \sum_{i \in \{+,-\}} \Bigl\{ \frac{\overline{k}}{\underline{\lambda}}\Bigl[\alpha_1\|\dot{u}^\tau_i\|_{L^2(0,T;L^\infty(\Omega_i))}\|\dot{v}^\tau_i\|^2_{L^\infty(0,T;L^2(\Omega_i))}\\
&+2(C^{\Omega_i}_{H^1,L^4})^2\alpha_1\|\ddot{u}_i\|_{L^{2}(0,T;L^2(\Omega_i))}\|v^\tau_i\|_{L^\infty(0,T;H^1(\Omega_i))}\|\dot{v}_i\|_{L^2(0,T;H^1(\Omega_i))}\\
&+2(\|\dot{u}^\tau_i\|_{L^{2}(0,T;L^{\infty}(\Omega_i))}+\|\dot{u}_i\|_{L^{2}(0,T;L^{\infty}(\Omega_i))})\|\dot{v}^\tau_i\|^2_{L^{\infty}(0,T;L^2(\Omega_i))}\Bigr]+|\langle f_i, \dot{v}^\tau_i \rangle_{\tilde{X}^\star_i,\tilde{X}_i}| \Bigr\}.
\end{align*}
Here $C^{\Omega_i}_{H^1,L^4}$ stands for the norm of the embedding $H^1(\Omega_i) \hookrightarrow L^4(\Omega_i)$ and we have also utilized the fact that $(|\nabla \dot{u}_{i}|^{q-1}\nabla \dot{u}_i-|\nabla \dot{u}^{\tau}_i|^{q-1}\nabla \dot{u}^{\tau}_i)\cdot \nabla (\dot{u}_i-\dot{u}^\tau_i)  \geq 2^{1-q} \, |\nabla (\dot{u}_i-\dot{u}^\tau_i)|^{q+1}$, which follows from \eqref{aa_ineq5}. \\

\noindent We can employ 
\begin{align} \label{uniform_bound1} 
\|\dot{u}^\tau_i\|_{L^{2}(0,T;L^{\infty}(\Omega_i))} \leq  T^{\frac{q-1}{2(q+1)}}C^{\Omega_i}_{W^{1,q+1},L^\infty}\|\dot{u}^\tau_i\|_{L^{q+1}(0,T;W^{1,q+1}(\Omega_i))},
\end{align} and the same inequality with $\dot{u}_i$ instead of $\dot{u}^\tau_i$, as well as
\begin{align*}
\|v^\tau_{i}\|_{L^{\infty}(0,T;H^1(\Omega_{i}))} \leq& \, \sqrt{T} \|\dot{v}^\tau_{i}\|_{L^{2}(0,T;H^1(\Omega_{i}))}, 
\end{align*} (since $v_i^\tau \vert_{t=0}=\dot{v}_i^\tau \vert_{t=0}=0$) to conclude that, because  $\|\dot{u}^\tau_i\|_{L^{q+1}(0,T;W^{1,q+1}(\Omega_i))}$ is uniformly bounded for $\tau \in (-\tau_0,\tau_0)$, for sufficiently small $\bar{m}$ and final time $T$ it holds
\begin{equation} \label{convergence1}
\begin{aligned}
&\displaystyle \sum_{i \in \{+,-\}}(\|\dot{v}^\tau\|^2_{L^{\infty}(0,T;L^2(\Omega))}+\|\nabla v^\tau\|^2_{L^{\infty}(0,T;L^2(\Omega))} 
+\|\nabla \dot{v}^\tau\|^2_{L^{2}(0,T;L^2(\Omega))}\\
& +\|\nabla \dot{v}^\tau\|^{q+1}_{L^{q+1}(0,T;L^{q+1}(\Omega))} )\\
\leq& \, C(|\langle f_+, \dot{v}^\tau_+\rangle_{\tilde{X}^\star_+,\tilde{X}_+}|+|\langle f_-, \dot{v}^\tau_-\rangle_{\tilde{X}^\star_-,\tilde{X}_-}|),
\end{aligned}
\end{equation}
for some sufficiently large $C>0$ which does not depend on $\tau$. By employing the inequality \eqref{aa_ineq3} (with $\eta=0$) we obtain 
\begin{equation} \label{ineq_qL}
\begin{aligned}
&|(|A_\tau\nabla \dot{u}^\tau_i|^{q-1}A_\tau\nabla \dot{u}^\tau_i-|\nabla \dot{u}^\tau_i|^{q-1}\nabla \dot{u}^\tau_i )\nabla \dot{v}_i|  \\
\leq& \, C_q\,|(A_\tau-I)\nabla \dot{u}^\tau_i|\,(|\nabla \dot{u}^\tau_i|^{q-1}+|(A_\tau-I)\nabla \dot{u}^\tau_i|^{q-1})\,|\nabla \dot{v}_i|. 
\end{aligned}
\end{equation}
We can therefore estimate the terms on the right hand side in \eqref{convergence1} as
\begin{align} \label{ineq_qL1}
&|\langle f_i, \dot{v}^\tau_i \rangle_{\tilde{X}^\star_i,\tilde{X}_i }|  \nn \\
 \leq& \, \frac{1}{\overline{\lambda}}|I_\tau-1|_{L^\infty(\Omega_i)}(1+a_0)\|\ddot{u}_i\|_{L^2(0,T;L^2(\Omega_i))}\|\dot{v}^\tau_i\|_{L^2(0,T;L^2(\Omega_i))} \nn \\
&+(|I_\tau-1|_{L^\infty(\Omega_i)}\beta_1^2 +|A_\tau-I|_{L^\infty(\Omega_i)}(1+\beta_1)) 
\Bigl(\frac{1}{\underline{\varrho}}\|\nabla u_i^\tau\|_{L^2(0,T;L^2(\Omega_i))}\nn \\
&+\overline{b}(1-\underline{\delta})\|\nabla \dot{u}_i^\tau\|_{L^2(0,T;L^2(\Omega_i))}\Bigr)\|\nabla \dot{v}^\tau_i\|_{L^2(0,T;L^2(\Omega_i))} \\
&+\overline{b}\hspace{0.08em}\overline{\delta}\Bigl(q|A_\tau-I|_{L^\infty(\Omega_i)}(1+|A_\tau-I|^{q-1}_{L^\infty(\Omega_i)})
+|A_\tau-I|_{L^\infty(\Omega_i)}\beta_1^q\nn\\
&+|I_\tau-1|_{L^\infty(\Omega_i)}\beta_1^{q+1}\Bigr)\|\nabla \dot{u}_i^\tau\|^q_{L^{q+1}(0,T;L^{q+1}(\Omega_i))}\|\nabla \dot{v}^\tau_i\|_{L^{q+1}(0,T;L^{q+1}(\Omega_i))}\nn\\
&+|I_\tau-1|_{L^\infty(\Omega_i)}\frac{2\overline{k}}{\underline{\lambda}}(C^{\Omega_i}_{H^1,L^4})^2\|\dot{u}_i^\tau\|^2_{L^2(0,T;H^1(\Omega_i))}\|\dot{v}^\tau_i\|_{L^{\infty}(0,T;L^{2}(\Omega_i))},\nn
\end{align}
where $a_0$ is defined as in \eqref{avoid_degeneracy}. By inserting this into the estimate \eqref{convergence1} and by employing the uniform boundedness result from Proposition \ref{uniform_bound}, properties of the mapping $F_\tau$ from Lemma \ref{IKP08_eq:2dot8} and Young's inequality, we can conclude that
\begin{equation} \label{convergence2}
\begin{aligned}
&\displaystyle \lim_{\tau \rightarrow 0}\displaystyle \sum_{i \in \{+,-\}}(\|\dot{v}^\tau\|^2_{L^{\infty}(0,T;L^2(\Omega))}+\|\nabla v^\tau\|^2_{L^{\infty}(0,T;L^2(\Omega))} 
+\|\nabla \dot{v}^\tau\|^2_{L^{2}(0,T;L^2(\Omega))}\\
& +\|\nabla \dot{v}^\tau\|^{q+1}_{L^{q+1}(0,T;L^{q+1}(\Omega))} )=0.
\end{aligned}
\end{equation} In oder words, we know that $\displaystyle \lim_{\tau \rightarrow 0} u^\tau = u$ in $X$. To obtain the statement of the Proposition, we divide \eqref{convergence1} by $\tau$, and then it remains to show that $$\displaystyle \lim_{\tau \rightarrow 0} \frac{|\langle f_+, \dot{v}_+\rangle_{\tilde{X}^\star_+,\tilde{X}_+}|+|\langle f_-, \dot{v}_-\rangle_{\tilde{X}^\star_-,\tilde{X}_-}|}{\tau}=0.$$ This now follows from the estimate \eqref{ineq_qL1}, Lemma \ref{IKP08_eq:2dot8}, Proposition \ref{uniform_bound} and \eqref{convergence2}.
\end{proof}
\noindent If we assume higher spatial regularity of $u$, we can even obtain Lipschitz continuity with respect to domain perturbations.
\begin{proposition} \label{prop:un_bound} Let $q \geq 1$, $q>d-1$ and let assumptions \eqref{coeff_3} hold. Assume that the solution $u$ of \eqref{ModWest} satisfies 
\begin{align} \label{prop:un_bound_assumpt}
\|\nabla \dot{u}\|_{L^{2q}(0,T;L^{2q}(\Omega))} \leq \tilde{C},
\end{align} where $\tilde{C}$ depends only on $\Omega$ and the final time $T$. Then $$\frac{1}{\tau}(\|\dot{u}^\tau-\dot{u}\|_{L^{\infty}(0,T;L^2(\Omega_i))}+\|\nabla (u^\tau-u)\|_{L^{\infty}(0,T;L^2(\Omega))}
+\|\nabla (\dot{u}^\tau-\dot{u})\|_{L^{2}(0,T;L^2(\Omega))}) \leq C$$  for all $\tau \in (-\tau_0,\tau_0)$, $\tau \neq 0$, where $C$ does not depend on $\tau$.
\end{proposition}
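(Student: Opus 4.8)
The plan is to rerun the energy estimate from the proof of Proposition~\ref{prop_h2}, changing only the way the $q$-Laplace difference term is bounded. As before, set $v^\tau=u-u^\tau$, test the difference equation \eqref{weakform_difference} with $\phi=\dot v^\tau\one_{[0,t)}$, and use the coercivity of the left-hand side together with the smallness of $\bar m$ and $T$ to arrive at \eqref{convergence1}. Discarding the nonnegative term $\|\nabla\dot v^\tau\|^{q+1}_{L^{q+1}(0,T;L^{q+1}(\Omega))}$ on its left (it does not enter the assertion) and abbreviating
\begin{align*}
Y(\tau)^2:=\|\dot v^\tau\|^2_{L^\infty(0,T;L^2(\Omega))}+\|\nabla v^\tau\|^2_{L^\infty(0,T;L^2(\Omega))}+\|\nabla\dot v^\tau\|^2_{L^2(0,T;L^2(\Omega))},
\end{align*}
we are left with $Y(\tau)^2\leq C\bigl(|\langle f_+,\dot v^\tau_+\rangle_{\tilde{X}^\star_+,\tilde{X}_+}|+|\langle f_-,\dot v^\tau_-\rangle_{\tilde{X}^\star_-,\tilde{X}_-}|\bigr)$. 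The whole proof reduces to showing that the right-hand side is $\leq C\tau\,Y(\tau)$, since then $Y(\tau)\leq C\tau$ and the three norms in the statement, each of which is controlled by $Y(\tau)$, are Lipschitz in $\tau$.

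First I would record that, by Lemma~\ref{IKP08_eq:2dot8}, every summand of $f_i$ carries a factor $I_\tau-1$ or $A_\tau-I$ with $|I_\tau-1|_{L^\infty(\Omega)}+|A_\tau-I|_{L^\infty(\Omega)}\leq C\tau$ for $|\tau|<\tau_0$. For all the non-damping terms this is already enough: reading them off \eqref{ineq_qL1} and using the uniform bounds of Proposition~\ref{uniform_bound} for $\|\nabla u^\tau\|_{L^2(L^2)}$, $\|\nabla\dot u^\tau\|_{L^2(L^2)}$, $\|\ddot u\|_{L^2(L^2)}$ and $\|\dot u^\tau\|_{L^2(H^1)}$, each such term is $C\tau$ times one of $\|\dot v^\tau\|_{L^2(L^2)}$, $\|\nabla\dot v^\tau\|_{L^2(L^2)}$ or $\|\dot v^\tau\|_{L^\infty(L^2)}$, hence $\leq C\tau\,Y(\tau)$.

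The only genuine difficulty, and the reason for hypothesis \eqref{prop:un_bound_assumpt}, is the damping block. In Proposition~\ref{prop_h2} it was estimated through $\|\nabla\dot v^\tau\|_{L^{q+1}(0,T;L^{q+1}(\Omega))}$, which after Young's inequality \eqref{Young} only produces the rate $\tau^{(q+1)/(2q)}$ --- Lipschitz only when $q=1$. Instead I would return to the pointwise bound \eqref{ineq_qL}, and treat its two companion terms $|A_\tau\nabla\dot u^\tau_i|^{q-1}A_\tau\nabla\dot u^\tau_i\cdot(A_\tau-I)\nabla\phi_i$ and $(I_\tau-1)|A_\tau\nabla\dot u^\tau_i|^{q-1}A_\tau\nabla\dot u^\tau_i\cdot A_\tau\nabla\phi_i$ the same way. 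Since $|A_\tau-I|_{L^\infty(\Omega)}\leq C\tau$ is small and $|A_\tau|_{L^\infty(\Omega)}\leq\beta_1$, the entire block is dominated, for each $i\in\{+,-\}$, by
\begin{align*}
C_q\,\tau\int_0^T\!\!\int_{\Omega_i}|\nabla\dot u^\tau_i|^{q}\,|\nabla\dot v^\tau_i|\,dx\,ds\leq C_q\,\tau\,\|\nabla\dot u^\tau_i\|^{q}_{L^{2q}(0,T;L^{2q}(\Omega_i))}\,\|\nabla\dot v^\tau_i\|_{L^2(0,T;L^2(\Omega_i))},
\end{align*}
by the Cauchy--Schwarz inequality. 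Because $\tilde C$ in \eqref{prop:un_bound_assumpt} depends only on $\Omega$ and $T$, the same bound holds for the solutions $u_\tau$ on the perturbed domains, and the change of variables used in Proposition~\ref{uniform_bound} transfers it to $\|\nabla\dot u^\tau\|_{L^{2q}(0,T;L^{2q}(\Omega))}\leq C$ uniformly in $\tau$; hence this block too is $\leq C\tau\,Y(\tau)$.

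Collecting the two paragraphs gives $Y(\tau)^2\leq C\tau\,Y(\tau)$, so $Y(\tau)\leq C\tau$ uniformly in $|\tau|<\tau_0$, which is the claimed Lipschitz continuity. I expect the main obstacle to be exactly the damping term: the added integrability \eqref{prop:un_bound_assumpt} is needed precisely so that its estimate can be routed through the $L^2(0,T;L^2(\Omega))$-norm of $\nabla\dot v^\tau$ --- which is part of $Y(\tau)$ --- rather than through the $L^{q+1}$-norm, thereby upgrading the Hölder rate of Proposition~\ref{prop_h2} to the full Lipschitz power $\tau$.
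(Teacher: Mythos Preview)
Your argument is correct and follows essentially the same route as the paper: starting from the energy estimate \eqref{convergence1}, you re-estimate the damping block via Cauchy--Schwarz so that $\|\nabla\dot u^\tau\|_{L^{2q}(L^{2q})}^q$ pairs with $\|\nabla\dot v^\tau\|_{L^2(L^2)}$, which is exactly the modification \eqref{ineq_qL1_new} the authors make, and then close with $Y(\tau)^2\leq C\tau\,Y(\tau)$. Your explicit remark that the $L^{2q}$ bound on $\nabla\dot u$ transfers to $\nabla\dot u^\tau$ (because $\tilde C$ depends only on $\Omega$ and $T$, hence applies to $u_\tau$ as well) is in fact more careful than the paper, which simply writes $\|\nabla\dot u_i^\tau\|_{L^{2q}}$ in \eqref{ineq_qL1_new} without comment.
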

\begin{proof}
We can rewrite the norm on $\hat{X}=C^1(0,T;L^2(\Omega)) \cap H^1(0,T;H^1_0(\Omega))$ (cf. Proposion \ref{prop:adj}) as
\begin{align*}
\|u\|_{\hat{X}}:=\Bigl(\displaystyle \sum_{i \in \{+,-\}} \|u_i\|^2_{\hat{X}_i}\Bigr)^{1/2},
\end{align*}
where
\begin{align*}
\|u\|_{\hat{X}_i}:=& \,\Bigl(\|\dot{u}_i\|^2_{L^{\infty}(0,T;L^2(\Omega_i))}+\|\nabla u_i\|^2_{L^{\infty}(0,T;L^2(\Omega_i))}+\|\nabla \dot{u}_i\|^2_{L^{2}(0,T;L^2(\Omega_i))}\Bigr)^{1/2}.
\end{align*}
By employing assumption \eqref{prop:un_bound_assumpt}, we can modify estimate \eqref{ineq_qL1} by changing the second to last line as follows:
\begin{align} \label{ineq_qL1_new}
&|\langle f_i, \dot{v}^\tau_i \rangle_{\tilde{X}^\star_i,\tilde{X}_i }|  \nn \\
 \leq& \, \frac{1}{\overline{\lambda}}|I_\tau-1|_{L^\infty(\Omega_i)}(1+a_0)\|\ddot{u}_i\|_{L^2(0,T;L^2(\Omega_i))}\|\dot{v}^\tau_i\|_{L^2(0,T;L^2(\Omega_i))} \nn \\
&+(|I_\tau-1|_{L^\infty(\Omega_i)}\beta_1^2 +|A_\tau-I|_{L^\infty(\Omega_i)}(1+\beta_1)) 
(\frac{1}{\underline{\varrho}}\|\nabla u_i^\tau\|_{L^2(0,T;L^2(\Omega_i))}\nn \\
&+\overline{b}(1-\underline{\delta})\|\nabla \dot{u}_i^\tau\|_{L^2(0,T;L^2(\Omega_i))})\|\nabla \dot{v}^\tau_i\|_{L^2(0,T;L^2(\Omega_i))} \\
&+\overline{b}\hspace{0.08em}\overline{\delta}(q|A_\tau-I|_{L^\infty(\Omega_i)}(1+|A_\tau-I|^{q-1}_{L^\infty(\Omega_i)})
+|A_\tau-I|_{L^\infty(\Omega_i)}\beta_1^q\nn\\
&+|I_\tau-1|_{L^\infty(\Omega_i)}\beta_1^{q+1})\|\nabla \dot{u}_i^\tau\|^q_{L^{2q}(0,T;L^{2q}(\Omega_i))}\|\nabla \dot{v}^\tau_i\|_{L^{2}(0,T;L^{2}(\Omega_i))}\nn\\
&+|I_\tau-1|_{L^\infty(\Omega_i)}\frac{2\overline{k}}{\underline{\lambda}}(C^{\Omega_i}_{H^1,L^4})^2\|\dot{u}_i^\tau\|^2_{L^2(0,T;H^1(\Omega_i))}\|\dot{v}^\tau_i\|_{L^{\infty}(0,T;L^{2}(\Omega_i))}. \nn
\end{align}
This further implies that
\begin{align*}
\|v^\tau\|^2_{\hat{X}} \leq& \, \displaystyle \sum_{i \in \{+,-\}} |\langle f_i, \dot{v}^\tau_i \rangle_{\tilde{X}^\star_i,\tilde{X}_i }| \\
 \leq& \, C(|I_\tau-1|_{L^\infty(\Omega_i)}+|A_\tau-I|_{L^\infty(\Omega_i)}) \|v^\tau_i\|_{\hat{X}},
\end{align*}
where $C>0$ does not depend on $\tau$, from which we can conclude that 
\begin{align*}
\|v^\tau\|_{\hat{X}} \leq
C(|I_\tau-1|_{L^\infty(\Omega_i)}+|A_\tau-I|_{L^\infty(\Omega_i)}).
\end{align*}
The assertion then follows by applying Lemma \ref{IKP08_eq:2dot8}.
\end{proof}
\section{Auxiliary results} \label{preliminary}
\vspace{2mm}
\indent In order to calculate the shape derivative of our cost functional, we will need to employ the two forthcoming propositions. The assertions correspond to hypotheses (H4) and (H3) in \cite{IKP08}. \\
Note that, since
\begin{equation} \label{un_gradient_bound}
\begin{aligned}
\|\nabla \dot{u}^\tau\|_{L^{\infty}(0,T;L^{\infty}(\Omega))}=&\, \|DF_\tau \nabla (u_\tau \circ F_\tau)\|_{L^{\infty}(0,T;L^{\infty}(\Omega))} \\
\leq& \, (1+\tau_0|Dh|_{L^{\infty}(\Omega)})\|\nabla \dot{u}_\tau\|_{L^{\infty}(0,T;L^{\infty}(\Omega))},
\end{aligned}
\end{equation}
if we assume that 
\begin{align*}
(\mathcal{H}_2) \quad \|\nabla \dot{u}\|_{L^{\infty}(0,T;L^{\infty}(\Omega))} \leq C,
\end{align*}
where $C$ depends only on the (fixed) domain $\Omega$ and final time $T$, 
we also know that $\|\nabla u^\tau\|_{L^{\infty}(0,T;L^{\infty}(\Omega))}$ is uniformly bounded for $|\tau|<\tau_0$. Then also condition \eqref{prop:un_bound_assumpt} holds.
\begin{proposition} \label{prop_h4} Assume that the coefficients in the state equation satisfy \eqref{coeff_3} and $q>2$. Let hypotheses \text{\normalfont ($\mathcal{H}_1$)} and \text{\normalfont ($\mathcal{H}_2$)} hold. Then
$$\lim_{t\to0}\frac{1}{\tau} \langle (\tilde{E}(u^\tau,\tau)-\tilde{E}(u,\tau)) - (E(u^\tau,\Omega_+) 
- E(u,\Omega_+)) , p\rangle_{\hat{X}^{\star},\hat{X}}=0
$$
holds for the adjoint state $p$.
\end{proposition}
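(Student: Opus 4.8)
The plan is to expand the bracketed quantity term by term and show that each contribution, tested against $p$, is of order $\tau^2$; dividing by $\tau$ then forces the limit to be zero. Write $w^\tau:=u^\tau-u$ and observe that, since $F_0=\mathrm{id}$ gives $\tilde E(\cdot,0)=E(\cdot,\Omega_+)$, the expression
\[
D_\tau:=\bigl(\tilde E(u^\tau,\tau)-\tilde E(u,\tau)\bigr)-\bigl(E(u^\tau,\Omega_+)-E(u,\Omega_+)\bigr)
\]
is a mixed second difference in the state slot and in $\tau$. In every summand the geometric data $I_\tau,A_\tau$ enter only through $I_\tau-1$, $A_\tau-I$ and $I_\tau A_\tau^T A_\tau-I$, each of which is $O(\tau)$ in $L^\infty(\Omega)$ by Lemma \ref{IKP08_eq:2dot8}, multiplying a difference of the nonlinearity evaluated at $u^\tau$ and at $u$. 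Because $(\mathcal H_2)$ guarantees \eqref{prop:un_bound_assumpt}, Proposition \ref{prop:un_bound} applies and yields the rates $\|\dot w^\tau\|_{L^\infty(0,T;L^2)}$, $\|\nabla w^\tau\|_{L^\infty(0,T;L^2)}$, $\|\nabla\dot w^\tau\|_{L^2(0,T;L^2)}=O(\tau)$, and hence also $\|w^\tau\|_{L^\infty(0,T;L^2)}=O(\tau)$ since $w^\tau|_{t=0}=0$. The whole argument then reduces to pairing each $O(\tau)$ geometric factor with an $O(\tau)$ state-difference factor.

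For the two linear gradient terms the double difference collapses to $\tfrac1{\varrho_i}[(I_\tau A_\tau^T A_\tau-I)\nabla w^\tau]\cdot\nabla p$ and $b_i(1-\delta_i)[(I_\tau A_\tau^T A_\tau-I)\nabla\dot w^\tau]\cdot\nabla p$; bounding the geometric factor by $O(\tau)$ and using Cauchy--Schwarz against $\nabla p\in L^2(0,T;L^2)$ (valid because $p\in\hat X$) together with $\|\nabla w^\tau\|_{L^2(L^2)},\|\nabla\dot w^\tau\|_{L^2(L^2)}=O(\tau)$ gives $O(\tau^2)$. The reaction term $\tfrac{2k_i}{\lambda_i}(\dot u)^2$ contributes $(I_\tau-1)\tfrac{2k_i}{\lambda_i}\dot w^\tau(\dot u^\tau+\dot u)\,p$, which I estimate by $O(\tau)\,\|\dot w^\tau\|_{L^\infty(L^2)}\|\dot u^\tau+\dot u\|_{L^2(L^\infty)}\|p\|_{L^2(L^2)}=O(\tau^2)$, using the uniform bound $\dot u^\tau,\dot u\in L^{q+1}(0,T;L^\infty)\subset L^2(0,T;L^\infty)$ from Proposition \ref{uniform_bound}.

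The inertial term needs one integration by parts in time. Its double difference is $(I_\tau-1)\tfrac1{\lambda_i}\bigl[(1-2k_iu^\tau)\ddot w^\tau-2k_iw^\tau\ddot u\bigr]p$. The piece with $w^\tau\ddot u$ is directly $O(\tau^2)$, using $\ddot u\in L^2(0,T;L^\infty)$ from $(\mathcal H_1)$ and $\|w^\tau\|_{L^\infty(L^2)}=O(\tau)$. The piece with $\ddot w^\tau$ cannot be treated directly, since $\ddot w^\tau$ is only bounded, not $O(\tau)$; here I integrate by parts in $t$, moving the derivative onto $(1-2k_iu^\tau)p$. The boundary contributions vanish because $\dot w^\tau|_{t=0}=0$ and $p|_{t=T}=0$, and the remaining integral pairs the $O(\tau)$ factor $I_\tau-1$ with $\dot w^\tau$, so it is again $O(\tau^2)$. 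This step is conceptually routine but essential: without it the term is merely $O(\tau)$ and the quotient would fail to vanish.

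The genuine obstacle is the $q$-Laplace term. With $N(z):=|z|^{q-1}z$ and $\Delta_1:=N(A_\tau\nabla\dot u^\tau)-N(A_\tau\nabla\dot u)$, $\Delta_2:=N(\nabla\dot u^\tau)-N(\nabla\dot u)$, its double difference is
\[
b_i\delta_i\bigl\{\Delta_1\cdot A_\tau\nabla p\,I_\tau-\Delta_2\cdot\nabla p\bigr\}
=b_i\delta_i\bigl\{[(I_\tau A_\tau^T-I)\Delta_1]\cdot\nabla p+(\Delta_1-\Delta_2)\cdot\nabla p\bigr\}.
\]
For the first summand, \eqref{aa_ineq3} together with the $L^\infty$-bounds on $\nabla\dot u,\nabla\dot u^\tau$ from $(\mathcal H_2)$ give $|\Delta_1|\le C|\nabla\dot w^\tau|$, so after integrating against $\nabla p\in L^2$ it is $O(\tau)\cdot O(\tau)$. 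For $\Delta_1-\Delta_2$ I apply the representation formula \eqref{aa_formula} to both $\Delta_1$ and $\Delta_2$, writing each as a $\sigma$-integral linear in the increment $\nabla\dot w^\tau$; the difference then splits into a scalar $|\cdot|^{q-1}$ part, controlled by \eqref{aa_ineq4_b}, and the truly quasilinear part $\mathcal L$, controlled by \eqref{aa_ineq6}. Using $A_\tau-I=O(\tau)$ and the $L^\infty$-boundedness of all gradients, both estimates deliver the pointwise bound $|\Delta_1-\Delta_2|\le C\tau\,|\nabla\dot w^\tau|$, whence $O(\tau^2)$ after integration. This is exactly where $q>2$ is indispensable, since \eqref{aa_ineq6} estimates increments of $\mathcal L(z,y)=|z|^{q-3}(z\cdot y)z$, i.e.\ of the map $z\mapsto|z|^{q-3}z$, which is amenable to \eqref{aa_ineq3} only when $q-2>0$; the same $L^\infty$-control from $(\mathcal H_2)$ is also what renders the pairing with $p\in\hat X$ (only $H^1$ in space) meaningful, as it places $|\nabla\dot u|^{q-1}\nabla\dot u\in L^\infty$ so that it pairs with $\nabla p\in L^2$. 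Summing the five contributions, dividing by $\tau$ and letting $\tau\to0$ yields the claim.
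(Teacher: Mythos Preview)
Your proof is correct and follows essentially the same route as the paper's: the same decomposition into inertial, linear-gradient, reaction, and $q$-Laplace pieces, the same integration by parts in time for the $\ddot w^\tau$ term, and the same treatment of the $q$-Laplace double difference via the representation formula \eqref{aa_formula} together with \eqref{aa_ineq4_b} and \eqref{aa_ineq6}, which is where $q>2$ enters. The one genuine distinction is that you invoke Proposition~\ref{prop:un_bound} (Lipschitz continuity, giving $\|w^\tau\|=O(\tau)$ and hence each term $O(\tau^2)$), whereas the paper uses only Proposition~\ref{prop_h2} (H\"older continuity, giving $\|w^\tau\|=o(1)$ so that each term is $o(\tau)$); your version is cleaner and yields a sharper rate, while the paper's version shows that for this particular proposition the weaker H\"older estimate already suffices.
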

\begin{proof}
We begin by calculating the difference
\begin{align*}
&\frac{1}{\tau}\langle \tilde{E}(u^\tau,\tau)-\tilde{E}(u,\tau)-(E(u^\tau,\Omega)-E(u,\Omega)), p \rangle_{\hat{X}^{\star},\hat{X}} \\
=& \, \frac{1}{\tau} \displaystyle \sum_{i \in \{+,-\}}\Bigl(I_{i}+II_{i}+III_{i}+IV_{i}\Bigr),
\end{align*}
where we use the following notation
\begin{align*}
I_i=&\, \int_0^T \int_{\Omega_i}\frac{1}{\lambda_i}\Bigr ((1-2k_iu_i^\tau)(\ddot{u}_i^\tau-\ddot{u}_i)-2k_i(u_i^\tau-u_i)\ddot{u}_i \Bigr)(I_\tau-1) p_i \, dx \, ds \\
=&\,
-\int_0^T \int_{\Omega_i}\frac{1}{\lambda_i}(\dot{u}_i^\tau-\dot{u}_i)(I_\tau-1)(-2k_i\dot{u}_i^\tau p_i+(1-2k_iu_i^\tau)\dot{p}_i)\, dx \, ds\\
&-\int_0^T \int_{\Omega_i}\frac{2k_i}{\lambda_i}(u_i^\tau-u_i)\ddot{u}_i (I_\tau-1) p_i \, dx \, ds,
\end{align*}
\begin{align*}
II_i=& \, \int_0^T \int_{\Omega_i}\Bigr\{\Bigr((A_\tau-I)(\frac{1}{\varrho_i}\nabla(u_i^\tau-u_i)+b_i(1-\delta_i)\nabla(\dot{u}_i^\tau-\dot{u}_i)) \\
&+(I_\tau-1)(\frac{1}{\varrho_i}A_\tau\nabla(u_i^\tau-u_i)+b_i(1-\delta_i)A_\tau\nabla(\dot{u}_i^\tau-\dot{u}_i)) \Bigl)\cdot A_\tau\nabla p_i \\
&+\Bigr(\frac{1}{\varrho_i}\nabla(u_i^\tau-u_i)+b_i(1-\delta_i)\nabla(\dot{u}_i^\tau-\dot{u}_i)\Bigl)\cdot (A_\tau-I)\nabla p_i \Bigl\}\, dx \, ds, 
\end{align*}
\begin{align*}
III_i=&\int_0^T \int_{\Omega_i}b_i\delta_i  \Bigl \{ \Bigl(|A_\tau \nabla \dot{u}_i^\tau|^{q-1}A_\tau \nabla \dot{u}_i^\tau- |A_\tau \nabla \dot{u}_i|^{q-1}A_\tau \nabla \dot{u}_i \Bigr)\cdot A_\tau\nabla p_i I_\tau\\
&-\Bigl(|\nabla \dot{u}_i^\tau|^{q-1}\nabla \dot{u}_i^\tau-|\nabla \dot{u}_i|^{q-1} \nabla \dot{u}_i \Bigr)\cdot \nabla p_i\Bigr\}\, dx \, ds,
\end{align*}
\begin{align*}
IV_{i}=-\int_0^T \int_{\Omega_i}\frac{2k_i}{\lambda_i}(\dot{u}_i^\tau-\dot{u}_i)(\dot{u}_i^\tau+\dot{u}_i)(I_\tau-1)p_i\, dx \, ds,
\end{align*}
$i \in \{+,-\}$. Thanks to hypothesis ($\mathcal{H}_1$), we can estimate the first integral as
\begin{align*}
|I_i| \leq&\, \Bigl\{ \frac{1}{\underline{\lambda}}\|\dot{u}_i^\tau-\dot{u}_i\|_{L^{2}(0,T;L^{2}(\Omega))}(2\overline{k}\|\dot{u}_i^\tau\|_{L^2(0,T;L^\infty(\Omega_i))}\|p_i\|_{L^\infty(0,T;L^2(\Omega))} 
+(1+a_0)\|\dot{p}_i\|_{L^2(0,T;L^2(\Omega))})\\
&+\frac{2\overline{k}}{\underline{\lambda}}\|u_i^\tau-u_i\|_{L^{2}(0,T;L^{2}(\Omega))}\|\ddot{u}_i\|_{L^\infty(0,T;L^\infty(\Omega_i))}\|p_i\|_{L^2(0,T;L^2(\Omega))}\Bigr\}|I_\tau-1|_{L^{\infty}(\Omega)},
\end{align*} 
with $a_0$ defined as in \eqref{avoid_degeneracy}. Integrals $II_i$ and $IV_i$ can be estimated in a similar manner and, by employing the uniform boundedness of $A_\tau$, the H\" older continuity result given in Proposition \ref{prop_h2} and properties of the mapping $F_\tau$, 
 we can conclude that $$ \displaystyle \frac{1}{\tau}  \sum_{i \in \{+,-\}} \Bigl(I_{i}+II_{i}+IV_{i}\Bigr) \rightarrow 0, \ \text{as} \ \tau \rightarrow 0.$$ \noindent In order to show convergence of the remaining terms to zero, we first rewrite $III_i$ as
\begin{align} \label{iii}
III_i=& \,\int_0^T \int_{\Omega_i} b_i\delta_i  \Bigl \{ \Bigl(|A_\tau \nabla \dot{u}_i^\tau|^{q-1}A_\tau \nabla \dot{u}_i^\tau- |A_\tau \nabla \dot{u}_i|^{q-1}A_\tau \nabla \dot{u}_i \nn \\
&-(|\nabla \dot{u}_i^\tau|^{q-1}\nabla \dot{u}_i^\tau-|\nabla \dot{u}_i|^{q-1} \nabla \dot{u}_i)\Bigr)\cdot A_\tau\nabla p_i I_\tau \\
&+ (|\nabla \dot{u}_i^\tau|^{q-1}\nabla \dot{u}_i^\tau-|\nabla \dot{u}_i|^{q-1} \nabla \dot{u}_i )\cdot (I_\tau A_\tau-I) \nabla p_i \Bigr\} \, dx \, ds. \nn
\end{align}
By employing inequality \eqref{aa_ineq3} (with $\eta=0$) and hypothesis $(\mathcal{H}_2)$ we can estimate the last line as follows 
\begin{align*}
&\Bigl|\int_0^T \int_{\Omega_i}b_i \delta_i(|\nabla \dot{u}_i^\tau|^{q-1}\nabla \dot{u}_i^\tau-|\nabla \dot{u}_i|^{q-1} \nabla \dot{u}_i)\cdot (I_\tau A_\tau-I) \nabla p_i \, dx \, ds\Bigr|\\
\leq& \, C_q\overline{b}\hspace{0.08em} \overline{\delta}\int_0^T \int_{\Omega_i}|\nabla (\dot{u}_i^\tau-\dot{u}_i)|(|\nabla \dot{u}_i|^{q-1}+|\nabla \dot{u}_i^\tau|^{q-1})|(I_\tau A_\tau-I) \nabla p_i|\, dx \, ds, \\
\leq& \, C_q\overline{b}\hspace{0.08em} \overline{\delta} \|\nabla (\dot{u}_i^\tau-\dot{u}_i)\|_{L^{2}(0,T;L^{2}(\Omega))}(\|\nabla \dot{u}_i\|^{q-1}_{L^{\infty}(0,T;L^{\infty}(\Omega))}\\
&+\|\nabla \dot{u}_i^\tau\|^{q-1}_{L^{\infty}(0,T;L^{\infty}(\Omega))})|I_\tau A_\tau-I|_{L^{\infty}(\Omega_i)} \|\nabla p_i\|_{L^{2}(0,T;L^{2}(\Omega))},
\end{align*}
which, after division by $\tau$, tends to $0$ as $\tau \rightarrow 0$, due to Lemma \ref{IKP08_eq:2dot8}, Proposition \ref{prop_h2} and uniform boundedness of $\|\nabla \dot{u}_i^\tau\|_{L^{\infty}(0,T;L^{\infty}(\Omega))}$. It remains to estimate the first two lines in \eqref{iii}. We will first rewrite them using the representation formula \eqref{aa_formula} as
\begin{align*}
&\int_0^T \int_{\Omega_i}b_i \delta_i\Bigl(|A_\tau \nabla \dot{u}_i^\tau|^{q-1}A_\tau \nabla \dot{u}_i^\tau- |A_\tau \nabla \dot{u}_i|^{q-1}A_\tau \nabla \dot{u}_i\\
&-(|\nabla \dot{u}_i^\tau|^{q-1}\nabla \dot{u}_i^\tau-|\nabla \dot{u}_i|^{q-1} \nabla \dot{u}_i)\Bigr)\cdot A_\tau\nabla p_i I_\tau \, dx \, ds\\
=& \, \int_0^T \int_{\Omega_i}b_i \delta_i\Bigl\{(A_\tau-I)\nabla(\dot{u}_i^\tau-\dot{u}_i)\int_0^1|A_\tau \nabla \dot{u}_{i}+\sigma A_\tau \nabla (\dot{u}_{i}^\tau-\dot{u}_i)|^{q-1}\, d \sigma \cdot  A_\tau\nabla p_i I_\tau\\
&+(\nabla(\dot{u}_i^\tau-\dot{u}_i)\int_0^1\Bigl(|A_\tau \nabla \dot{u}_i+\sigma A_\tau \nabla (\dot{u}_i^\tau-\dot{u}_{i})|^{q-1}-| \nabla \dot{u}_{i}+\sigma  \nabla (\dot{u}_{i}^\tau-\dot{u}_{i})|^{q-1}\, \Bigr) d\sigma \\
&+ (q-1)\int_0^1 \Bigl(\mathcal{L}(A_\tau \nabla \dot{u}_i+\sigma A_\tau \nabla (\dot{u}_i^\tau-\dot{u}_{i}),A_\tau \nabla (\dot{u}_i^\tau-\dot{u}_i)) \\
&\quad -\mathcal{L}(\nabla \dot{u}_{i}+\sigma  \nabla (\dot{u}_{i}^\tau-\dot{u}_{i}),\nabla (\dot{u}_i^\tau-\dot{u}_i))\Bigr)\, d \sigma)\, \cdot  A_\tau\nabla p_i I_\tau\Bigr\} \, dx \, ds.
\end{align*}
with $\mathcal{L}$ as in \eqref{calL}.
For the first line on the right hand side (divided by $\tau$) we immediately have convergence to $0$, thanks to Lemma \ref{IKP08_eq:2dot8}, uniform boundedness of $A_\tau$ and Proposition \ref{prop_h2}. For the remaining terms we further have, due to inequalities \eqref{aa_ineq3} and \eqref{aa_ineq2}
\begin{align*}
&\Bigl|\int_0^T \int_{\Omega_i} \nabla(\dot{u}_i^\tau-\dot{u}_i)\int_0^1\Bigl(|A_\tau \nabla \dot{u}_i+\sigma A_\tau \nabla (\dot{u}_i^\tau-\dot{u}_{i})|^{q-1}\\
&-| \nabla \dot{u}_{i}+\sigma  \nabla (\dot{u}_{i}^\tau-\dot{u}_{i})|^{q-1}\, \Bigr) d\sigma \,\cdot  A_\tau\nabla p_i I_\tau  dx \, ds \Bigr| \\
\leq& \, C_q \|\nabla (\dot{u}_i^\tau-\dot{u}_i)\|_{L^{2}(0,T;L^{2}(\Omega_i))}|A_\tau-I|_{L^{\infty}(\Omega_i)}(\|\nabla \dot{u}_i\|_{L^{\infty}(0,T;L^{\infty}(\Omega_i))}\\
&+\|\nabla (\dot{u}_i^\tau-\dot{u}_i)\|_{L^{\infty}(0,T;L^{\infty}(\Omega_i))}) 
 \Bigl((1+\beta_1^{q-2})\|\nabla \dot{u}_i\|_{L^{\infty}(0,T;L^{\infty}(\Omega_i))}^{q-2}\\
 &+(1+\beta_1^{q-2})\|\nabla (\dot{u}_i^\tau-\dot{u}_i)\|_{L^{\infty}(0,T;L^{\infty}(\Omega_i))}^{q-2}\Bigr)\,\alpha_1 \beta_1\|\nabla p_i\|_{L^{2}(0,T;L^{2}(\Omega_i))}.
\end{align*}
Furthermore, by employing estimate \eqref{aa_ineq6} with $\eta=0$ we obtain
\begin{align*}
&\Bigl|(q-1)\int_0^1 (\mathcal{L}(A_\tau \nabla \dot{u}_i+\sigma A_\tau \nabla (\dot{u}_i^\tau-\dot{u}_{i}),A_\tau \nabla (\dot{u}_i^\tau-\dot{u}_i)) \\
& \quad-\mathcal{L}(\nabla \dot{u}_{i}+\sigma  \nabla (\dot{u}_{i}^\tau-\dot{u}_{i}),\nabla (\dot{u}^\tau_i-\dot{u}_i)))\, d \sigma\Bigr| \\
\leq & \, C_q |A_\tau-I|_{L^{\infty}(\Omega_i)}|\nabla(\dot{u}_i^\tau-\dot{u}_i)|\Bigl\{(|\nabla \dot{u}_i|+|\nabla (\dot{u}_i^\tau-\dot{u}_i)|)^2 ( \beta_1^{q-3}+1)(|\nabla \dot{u}_i|^{q-3}\\
&+| \nabla (\dot{u}_i^\tau-\dot{u}_i)|^{q-3})\beta_1^2
+(|\nabla \dot{u}_i|^{q-2}+|\nabla (\dot{u}_i^\tau-\dot{u}_i)|^{q-2})(1+\beta_1)(|\nabla \dot{u}_i|\\
&+|\nabla (\dot{u}_i^\tau-\dot{u}_i)|)\Bigr \},
\end{align*}
valid for $q >2$, from which, by making use of Proposition \ref{prop_h2}, Lemma \ref{IKP08_eq:2dot8} and hypothesis ($\mathcal{H}_1$), we finally have  $\frac{1}{\tau} III_i \rightarrow 0$ as $\tau \rightarrow 0$ and therefore Proposition \ref{prop_h4} holds.
\end{proof}
\noindent For the second property to hold we have to assume that $p$ is slightly more than $W^{1,2}$ regular on the subdomains, i.e.: \\

($\mathcal{H}_3$) \hspace{2mm} $p_{\vert \Omega_i} \in L^{2+\varepsilon}(0,T;W^{1,2+\varepsilon}(\Omega_i))$ for some $\varepsilon>0$.

\begin{proposition} \label{prop_h3}  Let $q >2$ and assumptions \eqref{coeff_3} on the coefficients hold. Assume that hypotheses \text{\normalfont ($\mathcal{H}_1$)-($\mathcal{H}_3$)} are valid. Then
\begin{align*} 
 \displaystyle \lim_{\tau \rightarrow 0} \frac{1}{\tau}\langle E(u^\tau,\Omega_+)-E(u,\Omega_+)-E_u(u,\Omega_+)(u^\tau-u),p \rangle_{\hat{X}^\star,\hat{X}}=0, 
\end{align*}
where $p$ is the adjoint state. 
\end{proposition}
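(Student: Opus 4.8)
The plan is to expand the operator difference term by term, observe that all contributions linear in $u$ cancel identically, and then show that each of the remaining, genuinely quadratic, remainders is $o(\tau)$ when paired with $p$. The quadratic smallness will come from the Lipschitz estimate of Proposition \ref{prop:un_bound}, while the two extra pieces of regularity $(\mathcal{H}_2)$ and $(\mathcal{H}_3)$ will be needed precisely to absorb the $q$-Laplace remainder.

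First I would set $w:=u^\tau-u$ and expand $\langle E(u^\tau,\Omega_+)-E(u,\Omega_+)-E_u(u,\Omega_+)w,p\rangle$ as a sum over $\Omega_+$ and $\Omega_-$. The two terms that are linear in $u$, namely $\tfrac{1}{\varrho_i}\nabla u\cdot\nabla\phi$ and $b_i(1-\delta_i)\nabla\dot u\cdot\nabla\phi$, contribute nothing, since for a linear map $L$ one has $L(u^\tau)-L(u)-L(w)=0$. A direct computation then shows that the mass, source and damping contributions to the remainder are, on each $\Omega_i$,
\[
-\frac{2k_i}{\lambda_i}\,w\,\ddot w,\qquad -\frac{2k_i}{\lambda_i}(\dot w)^2,\qquad b_i\delta_i\bigl(|\nabla\dot u^\tau|^{q-1}\nabla\dot u^\tau-|\nabla\dot u|^{q-1}\nabla\dot u-G_u(\nabla\dot w)\bigr),
\]
respectively, each of which is quadratic in $w$. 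Pairing with $p$ and dividing by $\tau$, it suffices to send each of these three groups to zero.

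For the mass term I would integrate by parts in time. This step is essential: $\ddot w=\ddot u^\tau-\ddot u$ is only bounded (not small) in $L^2(0,T;L^2(\Omega))$, so the naive product $w\,\ddot w$ would only be $O(\tau)$ and would survive division by $\tau$. Using $w\vert_{t=0}=\dot w\vert_{t=0}=0$ and the adjoint terminal condition $p\vert_{t=T}=0$, the boundary terms vanish and $\int_0^T w\,\ddot w\,p_i\,ds$ turns into $-\int_0^T (\dot w)^2 p_i\,ds-\int_0^T w\,\dot w\,\dot p_i\,ds$, which — together with the source remainder $(\dot w)^2$ — now contains only products of two among $w,\dot w$. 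Estimating via Hölder, the Sobolev embedding $H^1(\Omega)\hookrightarrow L^4(\Omega)$, and the bounds $p\in L^\infty(0,T;H^1_0(\Omega))$, $\dot p\in L^2(0,T;L^2(\Omega))$ from $p\in\hat X$, each such term is controlled by a product of two factors that are $O(\tau)$ by Proposition \ref{prop:un_bound} (which gives $\|\dot w\|_{L^\infty(0,T;L^2)}$, $\|\nabla w\|_{L^\infty(0,T;L^2)}$ and $\|\nabla\dot w\|_{L^2(0,T;L^2)}$ all $O(\tau)$, using Poincaré for $w\in H^1_0$). Hence these groups are $O(\tau^2)$, and divided by $\tau$ they vanish as $\tau\to0$.

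The $q$-Laplace remainder is the main obstacle and will occupy the bulk of the argument. Writing the difference $|\nabla\dot u^\tau|^{q-1}\nabla\dot u^\tau-|\nabla\dot u|^{q-1}\nabla\dot u$ through the representation formula \eqref{aa_formula} with $x=\nabla\dot u^\tau$, $y=\nabla\dot u$, and subtracting $G_u(\nabla\dot w)=D\Phi(\nabla\dot u)\nabla\dot w$, the remainder becomes a $\sigma$-integral of differences of the scalar factors $|y+\sigma(x-y)|^{q-1}-|y|^{q-1}$ and of $\mathcal{L}(y+\sigma(x-y),x-y)-\mathcal{L}(y,x-y)$. The former I would bound with \eqref{aa_ineq4_b} (and \eqref{aa_ineq2}), the latter with \eqref{aa_ineq6}, which is valid for $q>2$; in both, $(\mathcal{H}_2)$ together with the uniform bound \eqref{un_gradient_bound} makes $\|\nabla\dot u\|_{L^\infty}$ and $\|\nabla\dot u^\tau\|_{L^\infty}$ uniformly finite, so that all gradient powers are bounded and the whole remainder is dominated pointwise by $C_q\,|\nabla\dot w|^2$. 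Finally, to integrate this against $b_i\delta_i\nabla p_i$ when $p$ is only $H^1$, I would split $|\nabla\dot w|^2=|\nabla\dot w|^{2-\alpha}|\nabla\dot w|^{\alpha}$ with $\alpha=\tfrac{2(1+\varepsilon)}{2+\varepsilon}$, bound $|\nabla\dot w|^{2-\alpha}$ by the uniform $L^\infty$ bound, and apply Hölder in space–time with exponents $\tfrac{2+\varepsilon}{1+\varepsilon}$ and $2+\varepsilon$ to obtain $\le C\,\|\nabla\dot w\|_{L^2(0,T;L^2)}^{\alpha}\,\|\nabla p_i\|_{L^{2+\varepsilon}(0,T;L^{2+\varepsilon}(\Omega_i))}$, where the last factor is finite by $(\mathcal{H}_3)$. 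Using $\|\nabla\dot w\|_{L^2(0,T;L^2)}=O(\tau)$ from Proposition \ref{prop:un_bound}, this is $O(\tau^{\alpha})$, so after division by $\tau$ it is $O(\tau^{\alpha-1})\to0$. The crux is exactly here: $(\mathcal{H}_3)$ forces $\alpha>1$ strictly (because $\varepsilon>0$), which is what lets the extra power of $\tau$ beat the factor $1/\tau$; with $p$ merely in $H^1$ the corresponding exponent would be $\alpha=2$ in the wrong place and the estimate would only give $O(1)$.
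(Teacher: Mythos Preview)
Your proposal is correct and follows essentially the same route as the paper: the same decomposition into the $-\tfrac{2k_i}{\lambda_i}(w\ddot w+(\dot w)^2)$ remainder (handled by integration by parts in time and the embedding $H^1\hookrightarrow L^4$) and the $q$-Laplace remainder (handled via the representation formula \eqref{aa_formula} together with \eqref{aa_ineq4_b}, \eqref{aa_ineq6}). The only cosmetic differences are that the paper invokes Proposition~\ref{prop_h2} rather than Proposition~\ref{prop:un_bound} for the first group, and that it builds the exponent $\eta=\tfrac{2}{2+\varepsilon}$ directly into the inequalities instead of first obtaining a pointwise $|\nabla\dot w|^2$ bound and then splitting; your $\alpha=\tfrac{2(1+\varepsilon)}{2+\varepsilon}$ is exactly the paper's $2-\eta$, so the final H\"older estimate and the rate $O(\tau^{\alpha-1})$ coincide.
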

\begin{proof}
\noindent We have 
\begin{align} \label{h3_1}
&\langle E(u^\tau,\Omega_+)-E(u,\Omega_+)-E_u(u,\Omega_+)(u^\tau-u),p \rangle_{\hat{X}^\star,\hat{X}} \nn \\
=&\displaystyle \sum_{i \in \{+,-\}} \int_0^T \int_{\Omega_i} -\frac{2k_i}{\lambda_i}\Bigl((u_i^\tau-u_i)(\ddot{u}_i^\tau-\ddot{u}_i)+(\dot{u}_i^\tau-\dot{u}_i)^2\Bigr)p_i\, dx \, ds \nn\\
&+\displaystyle \sum_{i \in \{+,-\}}\int_0^T \int_{\Omega} b_i\delta_i \Bigl(|\nabla \dot{u}_i^\tau|^{q-1}\nabla \dot{u}_i^\tau-|\nabla \dot{u}_i|^{q-1}\nabla \dot{u}_i-|\nabla \dot{u}_i|^{q-1}\nabla (\dot{u}_i^\tau-\dot{u}_i) \\
&-(q-1)|\nabla \dot{u}_i|^{q-3}(\nabla \dot{u}_i\cdot \nabla (\dot{u}_i^\tau-\dot{u}_i))\nabla \dot{u}_i\Bigr)\cdot \nabla p_i \, dx \, ds.\nn
\end{align}
The first sum on the right hand side can be estimated as follows
\begin{align*}
 &\Bigl|\displaystyle \sum_{i \in \{+,-\}} \int_0^T \int_{\Omega_i} -\frac{2k_i}{\lambda_i}\Bigl((u^\tau_i-u_i)(\ddot{u}_i^\tau-\ddot{u}_i)+(\dot{u}_i^\tau-\dot{u}_i)^2\Bigr)p_i\, dx \, ds \Bigr| \\
 =&\, \Bigl|\displaystyle \sum_{i \in \{+,-\}}\Bigl[\int_{\Omega_i}\frac{2k_i}{\lambda_i}(u^\tau_i-u_i)(\dot{u}^\tau_i-\dot{u}_i)\, p_i \,dx\Bigr]_0^T+\displaystyle \sum_{i \in \{+,-\}}\int_0^T \int_{\Omega} \frac{2k_i}{\lambda_i}(u_i^\tau-u_i)(\dot{u}_i^\tau-\dot{u}_i)\, \dot{p}_i \, dx\, ds\Bigr| \\
\leq& \, \displaystyle \sum_{i \in \{+,-\}}\frac{2 \overline{k}_i}{\underline{\lambda}_i} (C^{\Omega_i}_{H^1,L^4})^2 \|u_i^\tau-u_i\|_{L^{\infty}(0,T;H^1(\Omega_i))}\|\dot{u}_i^\tau-\dot{u}_i\|_{L^\infty(0,T;L^2(\Omega_i))}\|\dot{p}_i\|_{L^{2}(0,T;H^1(\Omega_i))},
\end{align*}
since $(u -u^\tau) \vert_{t=0}=(\dot{u} -\dot{u}^\tau) \vert_{t=0}=0$ and $p\vert_{t=T}=\dot{p}\vert_{t=T}=0$. This expression, upon division by $\tau$, tends to $0$ as $\tau \rightarrow 0$, due to Proposition \ref{prop_h2}. The second sum in \eqref{h3_1} can be rewritten with the help of formula \eqref{aa_formula} as given below
\begin{align*}
&\displaystyle \sum_{i \in \{+,-\}}\int_0^T \int_{\Omega_i} b_i\delta_i\Bigl(|\nabla \dot{u}_i^\tau|^{q-1}\nabla \dot{u}_i^\tau-|\nabla \dot{u}_i|^{q-1}\nabla \dot{u}_i-|\nabla \dot{u}_i|^{q-1}\nabla (\dot{u}^\tau_i-\dot{u}_i)\\
&-(q-1)|\nabla \dot{u}_i|^{q-3}(\nabla \dot{u}_i\cdot \nabla (\dot{u}_i^\tau-\dot{u})_i)\nabla \dot{u}_i\Bigr)\cdot \nabla p_i\, dx \, ds \\
=&\displaystyle \sum_{i \in \{+,-\}}\int_0^T \int_{\Omega_i} b_i\delta_i \, \int_0^1 (|\nabla \dot{u}_i+\sigma\nabla (\dot{u}_i^\tau-\dot{u}_i)|^{q-1}-|\nabla \dot{u}_i|^{q-1})\, d\sigma \,\nabla(\dot{u}_i^\tau-\dot{u}_i)\cdot \nabla p_i  \, dx \, ds \\
&+\displaystyle \sum_{i \in \{+,-\}}\int_0^T \int_{\Omega} b_i\delta_i(q-1)\int_0^1 \Bigl( \mathcal{L}(\nabla \dot{u}_i+\sigma\nabla (\dot{u}^\tau-\dot{u}_i),\nabla (\dot{u}_i^\tau-\dot{u}_i))\\
& -\mathcal{L}(\nabla \dot{u}_i,\nabla (\dot{u}_i^\tau-\dot{u}_i)) \Bigr) \cdot \nabla p_i \, d\sigma \, dx \, ds \\
:=& \, I+II,
\end{align*}
where $\mathcal{L}$ can be estimated as in \eqref{aa_ineq6}. By employing inequality \eqref{aa_ineq3} we obtain
\begin{align*}
|I|\leq &\displaystyle \sum_{i \in \{+,-\}}\int_0^T \int_{\Omega_i} \overline{b} \hspace{0.08em} \overline{\delta } C_q \, |\nabla(\dot{u}_i^\tau-\dot{u}_i)|^{2-\eta}\int_0^1 (|\nabla \dot{u}_i+\sigma\nabla (\dot{u}_i^\tau-\dot{u}_i)|+|\nabla \dot{u}_i|)^{q-2+\eta} \, d\sigma \, | \nabla p_i|   \, dx \, ds.
\end{align*}
Making use of H\" older's inequality and hypothesis $(\mathcal{H}_1)$ results in
\begin{align*}
|I| \leq & \, \overline{b} \hspace{0.08em} \overline{\delta} C_q \sum_{i \in \{+,-\}} \int_0^T \int_{\Omega} |\nabla(\dot{u}_i^\tau-\dot{u}_i)|^{2-\eta}(  |\nabla \dot{u}_i|^{q-2+\eta}+|\nabla(\dot{u}^\tau_i-\dot{u}_i)|^{q-2+\eta}) |\nabla p_i| \, dx \, ds  \nonumber \\
\leq& \, \overline{b} \hspace{0.08em} \overline{\delta} C_q\, \sum_{i \in \{+,-\}}\|\nabla (\dot{u}_i^\tau-\dot{u}_i)\|^{2-\eta}_{L^{2}(0,T;L^{2}(\Omega))}(\|\nabla \dot{u}_i\|^{q-2+\eta}_{L^{\infty}(0,T;L^{\infty}(\Omega_i))}\\
&+\|\nabla (\dot{u}_i^\tau-\dot{u}_i)\|^{q-2+\eta}_{L^{\infty}(0,T;L^{\infty}(\Omega))})\|\nabla p_i\|_{L^{\frac{2}{\eta}}(0,T;L^{\frac{2}{\eta}}(\Omega))} \nonumber.
\end{align*}
Here we can choose $\eta=\frac{2}{2+\varepsilon}$. Recall that $\frac{1}{\tau}\|\nabla (\dot{u}_i^\tau-\dot{u}_i)\|_{L^{2}(0,T;L^{2}(\Omega))}$ is uniformly bounded for $|\tau|<\tau_0$, $\tau \neq 0$, due to Proposition \ref{prop:un_bound} and that thanks to hypothesis $(\mathcal{H}_2)$ we have uniform boundedness of $\|\nabla \dot{u}_i^\tau\|_{L^{\infty}(0,T;L^{\infty}(\Omega))}$ as well. This means that we can achieve that $I$ upon division by $\tau$ tends to zero as $\tau \rightarrow 0$. \\
\indent By employing inequality \eqref{aa_ineq6}, we get the estimate for $II$:
\begin{align*} 
|II| \leq & \,  \overline{b} \hspace{0.08em} \overline{\delta}C_q \,\sum_{i \in \{+,-\}} \int_0^T \int_{\Omega_i} |\nabla (\dot{u}_i^\tau-\dot{u})|^{2-\eta}\Bigl\{|\nabla \dot{u}_i|^{q-3+\eta}+|\nabla (\dot{u}^\tau_i-\dot{u}_i)|^{q-3+\eta})(|\nabla \dot{u}_i|+|\nabla \dot{u}_i^\tau|)\\
&+|\nabla\dot{u}_i|^{q-2}(|\nabla \dot{u}_i|^\eta+|\nabla \dot{u}^\tau_i|^\eta)\Bigr\}\,|\nabla p_i| \, dx \, ds \\
\leq& \,\overline{b}\hspace{0.08em} \overline{\delta}C_q \, \sum_{i \in \{+,-\}}\|\nabla (\dot{u}^\tau-\dot{u}_i)\|^{2-\eta}_{L^{2}(0,T;L^{2}(\Omega_i))}\Bigl\{(\|\nabla \dot{u}_i\|^{q-3+\eta}_{L^{\infty}(0,T;L^{\infty}(\Omega_i))} \nonumber \\
&+\|\nabla (\dot{u}_i^\tau-\dot{u}_i)\|^{q-3+\eta}_{L^{\infty}(0,T;L^{\infty}(\Omega_i))})(\|\nabla \dot{u}_i\|_{L^{\infty}(0,T;L^{\infty}(\Omega_i))}
+\|\nabla \dot{u}_i^\tau\|_{L^{\infty}(0,T;L^{\infty}(\Omega_i))})\\
&+\|\nabla \dot{u}\|^{q-2}_{L^{\infty}(0,T;L^{\infty}(\Omega))}(\|\nabla \dot{u}\|^\eta_{L^{\infty}(0,T;L^{\infty}(\Omega_i))}
+\|\nabla \dot{u}_i^\tau\|^\eta_{L^{\infty}(0,T;L^{\infty}(\Omega_i))})\Bigr\}\|\nabla p_i\|_{L^{\frac{2}{\eta}}(0,T;L^{\frac{2}{\eta}}(\Omega_i))}, \nonumber
\end{align*}
with $\eta=\frac{2}{2+\varepsilon}$. Upon division by $\tau$, due to Propositions \ref{prop_h2} and \ref{prop:un_bound}, the right hand side tends to zero as $\tau \rightarrow 0$. 
\end{proof}
\section{Computation of the shape derivative} \label{computation_shape}
\vspace{2mm}
\indent Let $u^\tau$, $u$ satisfy $\tilde{E}(u^\tau,\tau)=0$ and $E(u,\Omega_+)=0$, for $|\tau|<\tau_0$, $\tau \in \mathbb{R}$. In  that case $u_{\tau}=u^\tau \circ F_\tau$ is the solution of $E(u_{\tau},\Omega_{+,\tau})=0$. We then have 
\begin{align*}
dJ(u,\Omega_+)h=& \,\lim_{\tau \rightarrow 0} \frac{1}{\tau} \, \int_0^T \int_{\Omega}(j(u^\tau)I_\tau-j(u)) \, dx \, ds\\
=& \, \int_0^T \int_{\Omega}(j^\prime(u) \displaystyle \lim_{\tau \rightarrow 0} \frac{u^\tau-u}{\tau}+j(u)\div h) \, dx \, ds,
\end{align*}
where we have used (similarly to Lemma 2.1, \cite{IKP08}) that
\begin{align*}
&\displaystyle \lim_{\tau \rightarrow 0} \frac{1}{\tau} \Bigl|\int_0^T \int_{\Omega}\Bigl(j(u^\tau)-j(u)-j^\prime(u)(u^\tau-u)\Bigr)I_\tau\, dx \, ds\Bigr| \\
\leq & \, \displaystyle \lim_{\tau \rightarrow 0} \frac{1}{\tau} \Bigl|\int_0^T \int_{\Omega} (u^\tau-u)^2I_\tau \, dx \, ds\Bigr|=0,
\end{align*}
which follows from Proposition \ref{prop_h2} and the fact that $I_\tau$ is uniformly bounded for $\tau \in (-\tau_0,\tau_0)$. By employing the adjoint problem \eqref{adjoint_weakform} and then proceeding as in the proof of Theorem 2.1, \cite{IKP08}, we obtain
\begin{align*}
\int_0^T \int_{\Omega} j^\prime(u)(u^\tau-u) \, dx \, ds 
=& \, \langle E_u(u,\Omega_+)(u^\tau-u),p\rangle_{\tilde{X}^\star,\tilde{X}} \\
=& \, -\langle E(u^\tau,\Omega_+)-E(u,\Omega_+)-E_u(u,\Omega_+)(u^\tau-u),p\rangle_{\tilde{X}^\star,\tilde{X}} \\
&-\langle \underbrace{\tilde{E}(u^\tau,\tau)}_{=0}-\tilde{E}(u,\tau)-(E(u^\tau,\Omega_+)-E(u,\Omega_+)),p\rangle_{\tilde{X}^{\star},\tilde{X}} \\
&-\langle \tilde{E}(u,\tau) -\underbrace{\tilde{E}(u,0)}_{=0},p\rangle_{\tilde{X}^\star,\tilde{X}}.
\end{align*}
The second and third line divided by $\tau$ tend to zero, as $\tau \rightarrow 0$, on account of Propositions \ref{prop_h4} and \ref{prop_h3} and we are left with
\begin{align*}
&\displaystyle \lim_{\tau \rightarrow 0} \frac{1}{\tau} \int_0^T \int_{\Omega} j^\prime(u)(u^\tau-u) \, dx \, ds \\
=&- \, \displaystyle \lim_{\tau \rightarrow 0} \frac{1}{\tau}\langle \tilde{E}(u,\tau) -\tilde{E}(u,0),p\rangle_{\tilde{X}^\star,\tilde{X}}.
\end{align*}
\noindent This limit, representing the (artificial) time derivative of $\langle \tilde{E}(u,\tau),p \rangle_{\tilde{X}^{\star},\tilde{X}}$, is typically computed by transforming the expressions $\tilde{E}(u,\tau)$ and $p$ back to $E(u \circ F_\tau^{-1}, \Omega_{+,\tau})$ and $p \circ F_\tau^{-1}$, and then making use of differentiation rules for $u \circ F_\tau^{-1}$ and $p \circ F_\tau^{-1}$ (see Examples 1-5 and Lemma 2.4, \cite{IKP08}). However, these rules assume $H^2$ differentiability in space of the primal and the adjoint state, which is too high of a requirement in our case. Instead, we continue with calculating the difference
\begin{align*}
&\langle \tilde{E}(u,\tau) -\tilde{E}(u,0),p\rangle_{\tilde{X}^\star,\tilde{X}} = I_++I_-,
\end{align*}
where the two terms on the right hand side are given by
\begin{align} \label{weak_shape1}
I_i=& \,\int_0^T \int_{\Omega_i} \Bigl\{\frac{1}{\lambda_i}(1-2k_iu_i)\ddot{u}_i p_i(I_\tau-1) \nn \\
&+\frac{1}{\varrho_i}((A_\tau-I)\nabla u_i \cdot \nabla p_i+A_\tau \nabla u_i \cdot (A_\tau-I)\nabla p_i+A_\tau \nabla u_i\cdot A_\tau \nabla p_i(I_\tau-1))\nn \\
&+b_i(1-\delta_i)((A_\tau-I)\nabla \dot{u}_i \cdot \nabla p_i+A_\tau \nabla \dot{u}_i \cdot (A_\tau-I)\nabla p_i
+A_\tau \nabla \dot{u}_i\cdot A_\tau \nabla p_i(I_\tau-1)) \nn\\
&+b_i\delta_i\Bigl((A_\tau-I)\nabla \dot{u}_i \int_0^1 |\nabla \dot{u}_i+\sigma(A_\tau-I)\nabla \dot{u}_i|^{q-1}\, d\sigma \, \cdot A_\tau \nabla p_i I_\tau\\
&+(q-1)\int_0^1|\nabla \dot{u}_i+\sigma(A_\tau-I)\nabla \dot{u}_i|^{q-3}(\nabla \dot{u}_i+\sigma(A_\tau-I)\nabla \dot{u}_i) \nn\\
&\cdot (A_\tau-I)\nabla \dot{u}_i(\nabla \dot{u}_i+\sigma(A_\tau-I)\nabla \dot{u}_i)\, d\sigma \, \cdot A_\tau \nabla p_i I_\tau \nn\\
&+|\nabla \dot{u}_i|^{q-1}\nabla \dot{u}_i \cdot ((A_\tau-I)\nabla p_i I_\tau+(I_\tau-1)\nabla p_i)\Bigr)
-\frac{2k_i}{\lambda_i}(\dot{u}_i)^2p_i(I_\tau-1)\Bigr\} \, dx \, ds,\nn
\end{align}
$i \in \{+,-\}$, and we have employed the formula \eqref{aa_formula} to represent the difference 
$|A_\tau\nabla \dot{u}_i|^{q-1}A_\tau\nabla \dot{u}_i-|\nabla \dot{u}_i|^{q-1}\nabla \dot{u}_i$. Dividing \eqref{weak_shape1} by $\tau$, passing to the limit and utilizing Lemma  \ref{IKP08_eq:2dot8} yields
\begin{align} \label{aa_shapecomp_diff}
&\displaystyle \lim_{\tau \rightarrow 0} \frac{1}{\tau} \, \langle \tilde{E}(u,\tau) -\tilde{E}(u,0),p\rangle_{\tilde{X}^\star,\tilde{X}} \nn  \\
=& \, \displaystyle \sum_{i \in \{+,-\}}\int_0^T \int_{\Omega_i}\Bigl\{ 
-(\frac{1}{\varrho_i} \nabla u_i^T+b_i(1-\delta_i) \nabla \dot{u}_i^T 
+b_i\delta_i |\nabla \dot{u}_i |^{q-1} \nabla \dot{u}_i^T ) (Dh^T \nabla p_i+Dh \nabla p_i)    \nn \\
& +b_i\delta_i(q-1)|\nabla \dot{u}_i|^{q-3}(\nabla \dot{u}_i \cdot (-Dh)^T \nabla \dot{u}_i )( \nabla \dot{u}_i \cdot \nabla p_i ) \Bigr\} \, dx \, ds  \\
&+\displaystyle \sum_{i \in \{+,-\}} \int_0^T \int_{\Omega_i} \Bigl\{\frac{1}{\lambda_i}(1-2k_i  u_i )\ddot{u}_i p_i  + \frac{1}{\varrho_i}\nabla u_i \cdot \nabla p_i +b_i(1-\delta_i)\nabla \dot{u}_i \cdot \nabla p_i\nn\\
& +b_i\delta_i |\nabla \dot{u}_i |^{q-1}\nabla \dot{u}_i \cdot \nabla p_i-\frac{2k_i}{\lambda_i}(\dot{u}_i)^2p_i \Bigr\} \,\div h \, dx \, ds, \nn
\end{align}
and we can now express the Eulerian derivative:
\begin{theorem}(Weak shape derivative) \label{weak_shape} Let $q > 2 $, $u_0, u_1 \in W_0^{1,q+1}(\Omega)$, and assumptions \eqref{coeff_3} on coefficients hold. Assume that \text{\normalfont ($\mathcal{H}_1$)}-\text{\normalfont ($\mathcal{H}_3$) } are valid. Then the shape derivative of $J$ at $\Omega_+$ with respect to $h \in C^{1,1}(\bar{\Omega},\mathbb{R}^d)$ can be represented as
\begin{align} \label{strong}
dJ(u,\Omega_+)h  
=& \, \int_0^T \int_{\Omega}\Bigl\{ 
(\frac{1}{\varrho} \nabla u^{\tau}+b(1-\delta) \nabla \dot{u}^T+b\delta |\nabla \dot{u} |^{q-1} \nabla \dot{u}^T )(Dh^T \nabla p+Dh \nabla p)   \nn \\
& +b\delta(q-1)|\nabla \dot{u}|^{q-3}(\nabla \dot{u} \cdot Dh^T \nabla \dot{u})( \nabla \dot{u} \cdot \nabla p ) \Bigr\} \, dx \, ds \nn \\
&-\int_0^T \int_{\Omega} \Bigl\{\frac{1}{\lambda}(1-2ku )\ddot{u} p  + \frac{1}{\varrho}\nabla u \cdot \nabla p +b(1-\delta)\nabla \dot{u} \cdot \nabla p \\
& +b\delta|\nabla \dot{u} |^{q-1}\nabla \dot{u}\cdot \nabla p-\frac{2k}{\lambda}(\dot{u})^2p -j(u)\Bigr\} \,\div h \, dx \, ds.\nn
\end{align}
\end{theorem}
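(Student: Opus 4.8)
The plan is to assemble the computations developed above in this section into the two families of integral terms appearing in \eqref{strong}. By the definition of the Eulerian derivative together with the change of variables of Lemma~\ref{transf_lemma}, I would first write
\[
dJ(u,\Omega_+)h=\lim_{\tau\to0}\frac1\tau\int_0^T\int_\Omega\bigl(j(u^\tau)I_\tau-j(u)\bigr)\,dx\,ds .
\]
Since $j(v)=(v-u_d)^2$ is exactly quadratic, $j(u^\tau)-j(u)=j'(u)(u^\tau-u)+(u^\tau-u)^2$, and, after isolating the factor $I_\tau$, the remainder $\frac1\tau\int_0^T\int_\Omega(u^\tau-u)^2I_\tau\,dx\,ds$ vanishes by Proposition~\ref{prop_h2}; combined with $\frac1\tau(I_\tau-1)\to\div h$ from Lemma~\ref{IKP08_eq:2dot8} and the $L^2$-convergence $u^\tau\to u$, this gives the decomposition
\[
dJ(u,\Omega_+)h=\lim_{\tau\to0}\frac1\tau\int_0^T\int_\Omega j'(u)(u^\tau-u)\,dx\,ds+\int_0^T\int_\Omega j(u)\,\div h\,dx\,ds .
\]

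Next I would remove the state increment $u^\tau-u$ through the adjoint equation. Using $u^\tau-u$ as direction in the adjoint problem \eqref{adjoint_weakform} identifies $\int_0^T\int_\Omega j'(u)(u^\tau-u)\,dx\,ds$ with $\langle E_u(u,\Omega_+)(u^\tau-u),p\rangle$, which I then telescope as
\[
\langle E_u(u,\Omega_+)(u^\tau-u),p\rangle=-A_\tau-B_\tau-C_\tau,
\]
where $A_\tau=\langle E(u^\tau,\Omega_+)-E(u,\Omega_+)-E_u(u,\Omega_+)(u^\tau-u),p\rangle$, $B_\tau=\langle\tilde E(u^\tau,\tau)-\tilde E(u,\tau)-(E(u^\tau,\Omega_+)-E(u,\Omega_+)),p\rangle$ and $C_\tau=\langle\tilde E(u,\tau)-\tilde E(u,0),p\rangle$; here the cancellation uses $\tilde E(u^\tau,\tau)=0$ from \eqref{equation} and $\tilde E(u,0)=E(u,\Omega_+)=0$. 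Dividing by $\tau$, the term $B_\tau$ vanishes in the limit by Proposition~\ref{prop_h4} and the term $A_\tau$ by Proposition~\ref{prop_h3}, leaving $\lim_{\tau\to0}\frac1\tau\int_0^T\int_\Omega j'(u)(u^\tau-u)\,dx\,ds=-\lim_{\tau\to0}\frac1\tau C_\tau$.

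It remains to evaluate $\lim_{\tau\to0}\frac1\tau C_\tau$ explicitly. Substituting the definition \eqref{Etil} of $\tilde E$ into $C_\tau=\tilde E(u,\tau)-\tilde E(u,0)$ and using the representation formula \eqref{aa_formula} to expand the difference $|A_\tau\nabla\dot u|^{q-1}A_\tau\nabla\dot u-|\nabla\dot u|^{q-1}\nabla\dot u$, I obtain the explicit form $C_\tau=I_++I_-$ recorded in \eqref{weak_shape1}. Dividing by $\tau$ and letting $\tau\to0$, the derivatives $\frac1\tau(I_\tau-1)\to\div h$ and $\frac1\tau(A_\tau-I)\to-(Dh)^T$ of Lemma~\ref{IKP08_eq:2dot8} turn \eqref{weak_shape1} into the limit \eqref{aa_shapecomp_diff}.

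Finally, I would substitute $-\lim_{\tau\to0}\frac1\tau C_\tau$, i.e. the negative of \eqref{aa_shapecomp_diff}, together with the $\int_0^T\int_\Omega j(u)\,\div h$ contribution, into the decomposition of the first step. Merging the subdomain sum $\sum_{i\in\{+,-\}}\int_{\Omega_i}$ into a single integral over $\Omega$ with piecewise-constant coefficients (the interface $\Gamma$ having measure zero) then yields \eqref{strong}. Two points need care, both bookkeeping: the global sign produced by the adjoint relation, and the identity $(-Dh)^T=-Dh^T$, which restores the sign of the linearized $q$-Laplace term so that it appears as $+\,b\delta(q-1)|\nabla\dot u|^{q-3}(\nabla\dot u\cdot Dh^T\nabla\dot u)(\nabla\dot u\cdot\nabla p)$. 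The substantive analytic difficulty has already been absorbed into Propositions~\ref{prop_h2}, \ref{prop:un_bound}, \ref{prop_h4} and \ref{prop_h3}: the main obstacle there --- and the reason hypotheses $\mathcal{H}_1$--$\mathcal{H}_3$ and $q>2$ are imposed --- is to control the factor $1/\tau$ against the $q$-Laplace differences, which forces an essential spatial bound on $\nabla\dot u$ and the extra integrability of $\nabla p$ on the subdomains.
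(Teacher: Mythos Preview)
Your proposal is correct and follows essentially the same approach as the paper's own argument: the same decomposition of $dJ(u,\Omega_+)h$ into the $j(u)\div h$ piece and the $j'(u)(u^\tau-u)$ piece, the same adjoint-based telescoping of $\langle E_u(u,\Omega_+)(u^\tau-u),p\rangle$ into the three terms $A_\tau,B_\tau,C_\tau$ (with $\tilde E(u^\tau,\tau)=0$ and $\tilde E(u,0)=0$), elimination of $A_\tau$ and $B_\tau$ via Propositions~\ref{prop_h3} and~\ref{prop_h4}, and the explicit limit of $\frac{1}{\tau}C_\tau$ via \eqref{weak_shape1} and \eqref{aa_shapecomp_diff}. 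If anything, your write-up spells out the telescoping and the sign bookkeeping more explicitly than the paper, which defers to \cite{IKP08} for that step.
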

Note that the integrals in \eqref{strong} are well-defined thanks to hypothesis $(\mathcal{H}_1)$, and for them to be well-defined hypothesis $(\mathcal{H}_3)$ is actually not necessary.\\
\indent Theorem \ref{weak_shape} gives us the shape derivative of the cost functional in terms of the volume integrals, which is in \cite{Berg} regarded as a weak shape derivative. However, an obvious advantage of the volume expression of the shape derivative is that it allows for a lower regularity of shapes as well as the lower regularity of the primal and the adjoint state. Recently there have been suggestions that the domain representation is also advantageous in terms of easiness of computation and numerical implementations (see, for example, \cite{Hiptmair}, \cite{Laurain}), especially in the case of transmission problems where shape derivatives given in terms of the boundary integrals contain jumps of functions over the interfaces, which is numerically a delicate task to perform. 
\subsection{Strong shape derivative} \label{strong_shape}
 In order to express the shape derivative in the form required by the Delfour-Hadamard-Zol\' esio structure theorem we would have to apply Green's theorem to the last two lines in \eqref{aa_shapecomp_diff}, which is not allowed since $u$ and $p$ are not sufficiently regular. However, it turns out that if the domains are sufficiently smooth and $\nabla \dot{u}$ is bounded in $L^{\infty}(0,T;L^\infty(\Omega))$, the state variable exhibits $H^2$ regularity on each of the subdomains. This result together with an assumption regarding the regularity of the trace of $\nabla u_{\vert_{\Omega_i}}$ and $\nabla p_{\vert_{\Omega_i}}$ on $\Gamma$ makes expressing the shape derivative of the cost functional in terms of the boundary integrals possible. Let $\partial \Omega$ and $\Gamma=\partial \Omega_+$ be $C^{1,1}$ regular. We utilize the following result (cf. Theorem 2, \cite{KN}):
\begin{theorem} \label{thm:higher_reg}
Assume that $q\geq 1$, $q>d-1$, $u_0\vert_{\Omega_i} \in H^2(\Omega_i)$, $u_0,u_1 \in W_0^{1,q+1}(\Omega)$, and that $\partial \Omega$ and $\Gamma=\partial \Omega_+$ are $C^{1,1}$ regular. Let $u$ be the weak solution of \eqref{state_problem}.  If $u \in W^{1,\infty}(0,T;W^{1,\infty}(\Omega))$ and $\|\nabla \dot{u}\|_{L^{\infty}(0,T;L^{\infty}(\Omega))}$ is sufficiently small, then $u_i \in H^1(0,T;H^2(\Omega_i))$, $i \in \{+,-\}$.
\end{theorem}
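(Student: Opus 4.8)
The plan is to establish the one-sided $H^2$ regularity by a localized difference-quotient argument in the directions tangential to the boundary and interface, combined with a pointwise-in-time ordinary differential equation to recover the missing second normal derivative. First I would fix a $C^{1,1}$ partition of unity subordinate to a covering of $\overline{\Omega}$ by interior balls and by boundary balls centered on $\partial\Omega$ and on $\Gamma$, and straighten each boundary or interface patch by a $C^{1,1}$ diffeomorphism. Since $u$ and $\dot u$ are continuous across $\Gamma$ (the condition $\llbracket u\rrbracket=0$ holds for every $t$, hence also $\llbracket\dot u\rrbracket=0$), tangential difference quotients of $\dot u$ remain admissible global test functions in $W_0^{1,q+1}(\Omega)$, so the tangential estimate can be carried out on both sides of $\Gamma$ simultaneously using the global weak form \eqref{ModWest}; the interior patches are the easier special case in which every direction is tangential.

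For the tangential estimate I would apply a difference quotient $D_h^l$, with $l$ ranging over the $d-1$ directions parallel to the flattened boundary or interface, to the equation and test with $\chi^2 D_h^l\dot u$, where $\chi$ is the localizing cutoff, integrating in time over $(0,t)$. The coefficient $\frac{1}{\lambda_i}(1-2k_iu)$ of $\ddot u$ produces a kinetic-energy time derivative of $\int(1-2k_iu)(D_h^l\dot u)^2$, the Laplacian in $u$ produces $\frac{d}{dt}\|\nabla D_h^l u\|^2$, and the linear damping $b_i(1-\delta_i)$ produces the coercive dissipation $\|\nabla D_h^l\dot u\|^2$; crucially, the monotonicity inequality \eqref{aa_ineq5} shows that the difference quotient of the $q$-Laplace flux tested against $\nabla D_h^l\dot u$ contributes a nonnegative term, so the nonlinear damping only helps. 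The remaining contributions are error terms: the cutoff cross-terms and the commutators arising from $D_h^l\big((1-2k_iu)\ddot u\big)=-2k_i(D_h^lu)\ddot u+\cdots$ and from the data $D_h^l\big(\frac{2k_i}{\lambda_i}(\dot u)^2\big)$. Using the $L^\infty(0,T;L^\infty(\Omega))$ bound on $\nabla\dot u$ (which makes $D_h^lu$ uniformly bounded and tames the flux difference quotient through \eqref{aa_ineq3}, the apparent singularity of $|\nabla\dot u|^{q-3}$ cancelling as in \eqref{aa_ineq6}), together with the bound $\ddot u\in L^2(0,T;L^2(\Omega))$ from Proposition \ref{aa_corollary}, Young's inequality and Gronwall's lemma, these errors are absorbed and one obtains bounds on $\|\nabla D_h^l u\|_{L^\infty(0,T;L^2)}$ and $\|\nabla D_h^l\dot u\|_{L^2(0,T;L^2)}$ uniform in $h$. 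Letting $h\to0$ gives the tangential second derivatives $\partial_l\nabla u\in L^\infty(0,T;L^2(\Omega_i))$ and $\partial_l\nabla\dot u\in L^2(0,T;L^2(\Omega_i))$ for every tangential $l$, which in particular controls all mixed derivatives $\partial_l\partial_n u$ and $\partial_l\partial_n\dot u$.

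It remains to recover the pure normal second derivative, and here the difficulty is that the equation furnishes only a single scalar relation linking $\partial_{nn}u$ and $\partial_{nn}\dot u$. I would resolve this by reading that relation as a linear ODE in the physical time $t$ for the unknown $P:=\partial_{nn}u$, whose time derivative is $\dot P=\partial_{nn}\dot u$. Solving the flux equation on a single side for the normal derivatives, the coefficient multiplying $\dot P$ equals $b_i\big((1-\delta_i)+\delta_i a\big)$ with $a=|\nabla\dot u|^{q-1}+(q-1)|\nabla\dot u|^{q-3}(\partial_n\dot u)^2\ge0$ bounded (again by the $L^\infty$ gradient bound, the singular factor cancelling since $|\nabla\dot u|^{q-3}(\partial_n\dot u)^2\le|\nabla\dot u|^{q-1}$), hence bounded below by $b_i(1-\delta_i)>0$ because $\delta_i<1$; the right-hand side $R$ collects the already-controlled tangential second derivatives and the $L^2$ datum $\frac{2k_i}{\lambda_i}(\dot u)^2-\frac{1}{\lambda_i}(1-2k_iu)\ddot u$, so $R\in L^2(0,T;L^2(\Omega_i))$. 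The strict positivity of the leading coefficient makes this a well-posed linear ODE, and the initial value $P(0)=\partial_{nn}u_0\in L^2(\Omega_i)$ is exactly what the hypothesis $u_0\vert_{\Omega_i}\in H^2(\Omega_i)$ supplies; integrating yields $\partial_{nn}u\in L^\infty(0,T;L^2(\Omega_i))$ and $\partial_{nn}\dot u\in L^2(0,T;L^2(\Omega_i))$. The transmission conditions are not needed to recover the one-sided normal derivatives, the equation holding in the interior of each $\Omega_i$ suffices. Patching the interior and boundary/interface estimates through the partition of unity then gives $u_i,\dot u_i\in L^2(0,T;H^2(\Omega_i))$, that is $u_i\in H^1(0,T;H^2(\Omega_i))$.

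The main obstacle throughout is the nonlinear, potentially degenerate $q$-Laplace damping. Its degeneracy where $\nabla\dot u$ vanishes is neutralized by the nondegenerate linear damping coefficient $b_i(1-\delta_i)$ with $\delta_i<1$, which supplies both the coercive dissipation in the tangential step and the strictly positive leading coefficient in the normal ODE; the monotonicity \eqref{aa_ineq5} turns the hardest term into a favorable one, while the smallness of $\|\nabla\dot u\|_{L^\infty(0,T;L^\infty(\Omega))}$ is what guarantees that all nonlinear error terms, handled through \eqref{aa_ineq3} and \eqref{aa_ineq6}, can be absorbed into the good terms so that the Gronwall and ODE estimates close.
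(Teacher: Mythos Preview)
The paper does not prove this theorem; it is quoted as an external result from \cite{KN} (see the sentence preceding the statement: ``We utilize the following result (cf.\ Theorem 2, \cite{KN})''). There is therefore no proof in the present paper to compare your proposal against.

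That said, your strategy---tangential difference quotients in the weak formulation followed by an ODE-in-time recovery of the pure normal second derivative---is the standard route for piecewise $H^2$ regularity across a $C^{1,1}$ interface, and it is essentially how \cite{KN} proceeds. Your identification of the key structural points is correct: the monotonicity \eqref{aa_ineq5} makes the $q$-Laplace contribution nonnegative in the tangential step (it does not by itself yield a useful $L^{q+1}$ bound on $\nabla D_h^l\dot u$, but it has the right sign, and the needed $L^2$ coercivity comes from the linear damping $b_i(1-\delta_i)$); the leading coefficient in the normal ODE is $b_i\bigl[(1-\delta_i)+\delta_i|\nabla\dot u|^{q-1}+\delta_i(q-1)|\nabla\dot u|^{q-3}(\partial_n\dot u)^2\bigr]\geq b_i(1-\delta_i)>0$, so the ODE is uniformly nondegenerate; and the hypothesis $u_0\vert_{\Omega_i}\in H^2(\Omega_i)$ furnishes the initial datum $\partial_{nn}u(0)$. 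Two technical points worth making explicit when you write it out: after a $C^{1,1}$ boundary flattening the principal part acquires $C^{0,1}$ variable coefficients, so the ``divergence'' you expand has cross-terms and lower-order contributions that must be tracked; and the admissibility of the test function $\chi^2 D_h^l\dot u$ in $\tilde X=L^2(0,T;W_0^{1,q+1}(\Omega))$ holds for each fixed $h>0$ because tangential translates preserve $W^{1,q+1}$ inside the cutoff and the coefficients are constant on each side of the (flattened) interface.
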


\indent To be able to express the shape derivative in terms of the boundary integrals over $\Gamma$, we first impose additional regularity hypotheses on $u$ and $p$:\\

$(\mathcal{H}_4)$ \hspace{2mm}  $\text{tr}^{\Omega_i}_{\Gamma} \nabla p \in L^1(0,T;L^{1}(\Gamma))$, $i \in \{+,-\}$,\\

$(\mathcal{H}_5)$ \hspace{2mm}  $\text{tr}^{\Omega_i}_{\Gamma} \nabla u \in L^\infty(0,T;L^\infty(\Gamma))$, $i \in \{+,-\}$,\\
 
\noindent where $\text{tr}^{\Omega_i}_{\Gamma}\nabla u$ and $\text{tr}^{\Omega_i}_{\Gamma}\nabla p$ stand for the trace of $\nabla u \vert_{\Omega_i}$ and $\nabla p \vert_{\Omega_i}$, respectively, on $\Gamma$. Hypotheses $(\mathcal{H}_4)$ and $(\mathcal{H}_5)$ will ensure that the forthcoming boundary integrals are well-defined. 
Note that they do not follow from the previous hypotheses $(\mathcal{H}_1)$-$(\mathcal{H}_3)$ and regularity results, partially due to the lack of an appropriate trace theorem in the limiting $L^\infty$ case.
\\
\indent Next, we introduce sufficiently smooth in space approximations of the adjoint state in $H^1(0,T;H^1(\Omega_i))$. Fix $i \in \{+,-\}$. Let $\{p_{i,m}\}_{m=1}^\infty \subset H^1(0,T;C^{\infty}(\Omega_i))$ be a sequence that converges to $p_i$ in $H^1(0,T;H^1(\Omega_i))$ and such that $p_{i,m}=p_i$ on $\partial \Omega_i$ (cf. Theorem 3.42, \cite{Demengel}). 
\indent We can then approximate \eqref{aa_shapecomp_diff} as 
\begin{align} \label{aa_shapecomp_diff_approx}
&\displaystyle \lim_{\tau \rightarrow 0} \frac{1}{\tau} \, \langle \tilde{E}(u,\tau) -\tilde{E}(u,0),p\rangle_{\tilde{X}^\star,\tilde{X}} \nn  \\
=& \, \displaystyle \sum_{i \in \{+,-\}}\int_0^T \int_{\Omega_i}\Bigl\{ 
-(\frac{1}{\varrho_i} \nabla u_i^T+b_i(1-\delta_i) \nabla \dot{u}_i^T 
+b_i\delta_i |\nabla \dot{u}_i |^{q-1} \nabla \dot{u}_i^T ) (Dh^T \nabla p_{i,m}\nn \\
&+Dh \nabla p_{i,m})   
+b_i\delta_i(q-1)|\nabla \dot{u}_i|^{q-3}(\nabla \dot{u}_i \cdot (-Dh)^T \nabla \dot{u}_i )( \nabla \dot{u}_i \cdot \nabla p_{i,m} ) \Bigr\} \, dx \, ds  \\
&+\displaystyle \sum_{i \in \{+,-\}} \int_0^T \int_{\Omega_i} \Bigl\{\frac{1}{\lambda_i}(1-2k_i  u_i )\ddot{u}_i p_{i,m}  + \frac{1}{\varrho_i}\nabla u_i \cdot \nabla p_{i,m}+b_i(1-\delta_i)\nabla \dot{u}_i \cdot \nabla p_{i,m} \nn\\
& +b_i\delta_i |\nabla \dot{u}_i |^{q-1}\nabla \dot{u}_i \cdot \nabla p_{i,m} 
-\frac{2k_i}{\lambda_i}(\dot{u}_i)^2p_{i,m} \Bigr\} \,\div h \, dx \, ds+R_1(p_i,p_{i,m}) \nn,
\end{align}
where the error term is given by
\begin{align*}
R_1(p_i,p_{i,m})=& \,\displaystyle \sum_{i \in \{+,-\}}\int_0^T \int_{\Omega_i}\Bigl\{ 
-(\frac{1}{\varrho_i} \nabla u_i^T+b_i(1-\delta_i) \nabla \dot{u}_i^T \nn \\
&+b_i\delta_i |\nabla \dot{u}_i |^{q-1} \nabla \dot{u}_i^T ) (Dh^T \nabla (p_i-p_{i,m})+Dh \nabla (p_i-p_{i,m}))    \nn \\
& +b_i\delta_i(q-1)|\nabla \dot{u}_i|^{q-3}(\nabla \dot{u}_i \cdot (-Dh)^T \nabla \dot{u}_i )( \nabla \dot{u}_i \cdot \nabla (p_i-p_{i,m}) ) \Bigr\} \, dx \, ds  \\
&+\displaystyle \sum_{i \in \{+,-\}} \int_0^T \int_{\Omega_i} \Bigl\{\frac{1}{\lambda_i}(1-2k_i  u_i )\ddot{u}_i (p_i-p_{i,m})  + \frac{1}{\varrho_i}\nabla u_i \cdot \nabla (p_i-p_{i,m}) \nn\\
&+b_i(1-\delta_i)\nabla \dot{u}_i \cdot \nabla (p_i-p_{i,m}) +b_i\delta_i |\nabla \dot{u}_i |^{q-1}\nabla \dot{u}_i \cdot \nabla (p_i-p_{i,m})\\
&-\frac{2k_i}{\lambda_i}(\dot{u}_i)^2(p_i-p_{i,m}) \Bigr\} \,\div h \, dx \, ds.
\end{align*}
\noindent Since $u_{i}$ and $ p_{i,m}$ are sufficiently smooth, we are allowed to employ Green's theorem in \eqref{aa_shapecomp_diff_approx}. This will cause the terms containing $Dh$ (not included in $R_1$) to cancel out, and we arrive at
\begin{align*}
&\displaystyle \lim_{\tau \rightarrow 0} \frac{1}{\tau} \, \langle \tilde{E}(u,\tau) -\tilde{E}(u,0),p\rangle_{\tilde{X}^\star,\tilde{X}}  \nn\\
=& \, \displaystyle \sum_{i \in \{+,-\}} \int_0^T \int_{\partial \Omega_i} \Bigl\{\frac{1}{\lambda_i}(1-2k_iu_{i})\ddot{u}_{i}p_{i,m}+\frac{1}{\varrho_i}\nabla u_{i} \cdot \nabla p_{i,m}  \nn  \\
&+b_i(1-\delta_i)\nabla \dot{u}_{i} \cdot \nabla p_{i,m} +b_i\delta_i |\nabla \dot{u}_{i}|^{q-1}\nabla \dot{u}_{i} \cdot \nabla p_{i,m}
-\frac{2k_i}{\lambda_i}(\dot{u}_{i})^2p_{i,m}\Bigr\}\, h^T n_i \, dx \, ds  \\
&-\displaystyle \sum_{i \in \{+,-\}}\int_0^T \int_{\Omega_i}\Bigl\{\frac{1}{\lambda_i}(1-2k_iu_i)\ddot{u}_i(\nabla p_{i,m}^Th)+\frac{1}{\varrho_i} \nabla u_i \cdot \nabla (\nabla p_{i,m}^Th)\nn \\
&+b_i(1-\delta_i)\nabla \dot{u}_i \cdot \nabla (\nabla p_{i,m}^Th)+b_i\delta_i|\nabla \dot{u}_{i}|^{q-1}\nabla \dot{u}_i \cdot \nabla (\nabla p_{i,m}^Th)\Bigr\}  \, dx \, ds  \\
&-\displaystyle \sum_{i \in \{+,-\}} \int_0^T \int_{\Omega_i}\Bigl\{\frac{1}{\lambda_i}(1-2k_iu_i)\ddot {p}_{m,i}(\nabla u_i^Th)+\frac{1}{\varrho_i} \nabla p_{i,m} \cdot \nabla (\nabla u_i^Th) \nn \\
&-b_i(1-\delta_i)\nabla \dot{p}_{m,i} \cdot \nabla (\nabla u_i^Th)
-b_i\delta_i \,(G_{u_i}(\nabla p_{i,m}))^{\Lcdot}  \cdot \nabla (\nabla u_i^Th) \Bigr\} \, dx \, ds +R_1(p_i,p_{i,m}). \nn 
\end{align*}
This expression can be rewritten as 
\begin{align*}
&\displaystyle \lim_{\tau \rightarrow 0} \frac{1}{\tau} \, \langle \tilde{E}(u,\tau) -\tilde{E}(u,0),p\rangle_{\tilde{X}^\star,\tilde{X}}  \nn\\
=& \, \displaystyle \sum_{i \in \{+,-\}} \int_0^T \int_{\partial \Omega_i} \Bigl\{\frac{1}{\lambda_i}(1-2k_iu_{i})\ddot{u}_{i}p_{i}+\frac{1}{\varrho_i}\nabla u_{i} \cdot \nabla p_{i}  \nn  \\
&+b_i(1-\delta_i)\nabla \dot{u}_{i} \cdot \nabla p_{i} +b_i\delta_i |\nabla \dot{u}_{i}|^{q-1}\nabla \dot{u}_{i} \cdot \nabla p_{i}
-\frac{2k_i}{\lambda_i}(\dot{u}_{i})^2p_{i}\Bigr\}\, h^T n_i \, dx \, ds  \\
&-\displaystyle \sum_{i \in \{+,-\}}\int_0^T \int_{\Omega_i}\Bigl\{\frac{1}{\lambda_i}(1-2k_iu_i)\ddot{u}_i(\nabla p_{i,m}^Th)+\frac{1}{\varrho_i} \nabla u_i \cdot \nabla (\nabla p_{i,m}^Th)\nn \\
&+b_i(1-\delta_i)\nabla \dot{u}_i \cdot \nabla (\nabla p_{i,m}^Th)+b_i\delta_i|\nabla \dot{u}_{i}|^{q-1}\nabla \dot{u}_i \cdot \nabla (\nabla p_{i,m}^Th)\Bigr\}  \, dx \, ds  \\
&-\displaystyle \sum_{i \in \{+,-\}} \int_0^T \int_{\Omega_i}\Bigl\{\frac{1}{\lambda_i}(1-2k_iu_i)\ddot {p}_{i}(\nabla u_i^Th)+\frac{1}{\varrho_i} \nabla p_{i} \cdot \nabla (\nabla u_i^Th) \nn \\
&-b_i(1-\delta_i)\nabla \dot{p}_{i} \cdot \nabla (\nabla u_i^Th)
-b_i\delta_i \,(G_{u_i}(\nabla p_{i}))^{\Lcdot}  \cdot \nabla (\nabla u_i^Th) \Bigr\} \, dx \, ds\nn \\
& +R_1(p_i,p_{i,m})+R_2(p_i,p_{i,m})+R_3(p_i,p_{i,m}), \nn 
\end{align*}
(the first and the third sum are written in terms of $p_i$ plus the error $R_2+R_3$) where the approximation error terms are given by 
\begin{align*}
R_2(p_i,p_{i,m})=& \, \displaystyle \sum_{i \in \{+,-\}} \int_0^T \int_{\Omega_i}\Bigl\{\frac{1}{\lambda_i}(1-2k_iu_i) (\ddot{p}_i-\ddot{p}_{m,i})(\nabla u_i^Th)\\
&+\frac{1}{\varrho_i} \nabla (p_i-{p}_{m,i}) \cdot \nabla (\nabla u_i^Th)
-b_i(1-\delta_i)\nabla (\dot{p}_i-\dot{p}_{m,i}) \cdot \nabla (\nabla u_i^Th) \nn \\
&-b_i\delta_i \,(G_{u_i}(\nabla (p_i-p_{i,m})))^{\Lcdot}  \cdot \nabla (\nabla u_i^Th) \Bigr\} \, dx \, ds,
\end{align*}
\begin{align*}
R_3(p_i,p_{i,m})=& \,-\displaystyle \sum_{i \in \{+,-\}} \int_0^T \int_{\partial \Omega_i} \Bigl\{\frac{1}{\lambda_i}(1-2k_iu_{i})\ddot{u}_{i}(p_i-p_{i,m})+\frac{1}{\varrho_i}\nabla u_{i} \cdot \nabla (p_i- p_{i,m})  \nn  \\
&+b_i(1-\delta_i)\nabla \dot{u}_{i} \cdot \nabla (p_i-p_{i,m}) +b_i\delta_i |\nabla \dot{u}_{i}|^{q-1}\nabla \dot{u}_{i} \cdot \nabla (p_i-p_{i,m})\\
&-\frac{2k_i}{\lambda_i}(\dot{u}_{i})^2(p_{i}-p_{i,m})\Bigr\}\, h^T n_i \, dx \, ds.
\end{align*}
\begin{remark}
If $u \in W^{1,\infty}(0,T;W^{1,\infty}(\Omega))$, we are allowed to use $\phi \in L^2(0,T;H^1_0(\Omega))$ as a test function in \eqref{ModWest}. To see this, let $\phi_{j} \in L^2(0,T;C_0^\infty(\Omega))$, $\phi_j \rightarrow \phi$ in $L^2(0,T;H^1_0(\Omega))$ as $j \rightarrow \infty$. Then
\begin{align*}
&\int_0^T \int_{\Omega} \{\frac{1}{\lambda}(1-2ku)\ddot{u} \phi+\frac{1}{\varrho}\nabla u \cdot \nabla \phi + b(1-\delta)\nabla \dot{u} \cdot \nabla \phi \\
&+b \delta|\nabla \dot{u}|^{q-1} \nabla \dot{u} \cdot \nabla \phi-\frac{2k}{\lambda}(\dot{u})^2 \phi\} \, dx \, ds \\
=& \, \int_0^T \int_{\Omega} \{\frac{1}{\lambda}(1-2ku)\ddot{u} (\phi-\phi_j)+\frac{1}{\varrho}\nabla u \cdot \nabla (\phi-\phi_j) + b(1-\delta)\nabla \dot{u} \cdot \nabla (\phi-\phi_j) \\
&+b \delta|\nabla \dot{u}|^{q-1} \nabla \dot{u} \cdot \nabla (\phi-\phi_j)-\frac{2k}{\lambda}(\dot{u})^2 (\phi-\phi_j)\} \, dx \, ds \\
\leq& \, \frac{1}{\underline{\lambda}}((1+a_0)\|\ddot u\|_{L^2(0,T;L^2(\Omega))}+2\overline{k}(C^\Omega_{H_0^1,L^4})^2\sqrt{T}\|u\|^2_{L^\infty(0,T;H_0^1(\Omega))})\|\phi-\phi_j\|_{L^2(0,T;L^2(\Omega))} \\
&+(\frac{1}{\underline{\varrho}}\|\nabla u\|_{L^2(0,T;L^2(\Omega))}+\overline{b}(1-\underline{\delta})\|\nabla \dot{u}\|_{L^2(0,T;L^2(\Omega))}\\
&+ \overline{b}\hspace{0.08em}\overline{\delta}\|\nabla \dot{u}\|^{q-1}_{L^\infty(0,T;L^\infty(\Omega))}\|\nabla \dot{u}\|_{L^2(0,T;L^2(\Omega))})\|\nabla (\phi-\phi_j)\|_{L^2(0,T;L^2(\Omega))} \\
\rightarrow & \, 0, \ \ \ \text{as} \ \ j \rightarrow \infty.
\end{align*}
\end{remark}
\noindent Note that $\nabla u_{i} ^Th \in L^2(0,T;H^{1}(\Omega_i))$ and $\nabla p_{i,m}^Th \in L^2(0,T;H^1(\Omega_i))$, however functions
\begin{align*}
\tilde{\phi}(x,t):=\begin{cases} \nabla u_+^T(x,t)h(x) \quad x \in \Omega_+ \\ \nabla u_-^T(x,t)h(x) \quad x \in  \Omega_-\end{cases} \quad \tilde{\zeta}(x,t):=\begin{cases} \nabla p_{m,+}^T(x,t)h(x) \quad x \in  \Omega_+ \\ \nabla p_{m,-}^T(x,t)h(x) \quad x \in  \Omega_-\end{cases}
\end{align*} do not have to be continuous across the boundary $\Gamma$ and we cannot use them directly as test functions in the weak formulations of the state and the adjoint problem. 
 We can instead employ the two-domain weak formulations 
 which results in
\begin{align*}
&\displaystyle \lim_{\tau \rightarrow 0} \frac{1}{\tau} \, \langle \tilde{E}(u,\tau) -\tilde{E}(u,0),p\rangle_{\tilde{X}^\star,\tilde{X}}  \nn\\
=& \, \displaystyle \sum_{i \in \{+,-\}} \int_0^T \int_{\partial \Omega_i} \Bigl\{\frac{1}{\lambda_i}(1-2k_iu_{i})\ddot{u}_{i}p_{i}+\frac{1}{\varrho_i}\nabla u_{i} \cdot \nabla p_{i}  \nn  \\
&+b_i(1-\delta_i)\nabla \dot{u}_{i} \cdot \nabla p_{i} +b_i\delta_i |\nabla \dot{u}_{i}|^{q-1}\nabla \dot{u}_{i} \cdot \nabla p_{i}
-\frac{2k_i}{\lambda_i}(\dot{u}_{i})^2p_{i}\Bigr\}\, h^T n_i \, dx \, ds  \\
&-\displaystyle \sum_{i \in \{+,-\}}\int_0^T \int_{\partial \Omega_i}\Bigl\{\frac{1}{\varrho_i} \frac{\partial u_{i}}{\partial n_{i}} 
+b_i(1-\delta_i)\frac{\partial \dot{u}_{i}}{\partial n_{i}}
+b_i\delta_i|\nabla \dot{u}_{i}|^{q-1}\frac{\partial \dot{u}_{i}}{\partial n_{i}}\Bigr\} \, (\nabla p_{i,m}^Th) \, dx \, ds  \\
& - \displaystyle \sum_{i \in \{+,-\}} \int_0^T \int_{\partial \Omega_i}\Bigl\{\frac{1}{\varrho_i} \frac{\partial p_{i}}{\partial n_{i}}-b_i(1-\delta_i)\frac{\partial \dot{p}_{i}}{\partial n_{i}}
-b_i\delta_i \,(G_{u_i}(\nabla p_i) \cdot n_i)^{\Lcdot} \Bigr\} \, (\nabla u_{i}^Th) \, dx \, ds \\&-\displaystyle \sum_{i \in \{+,-\}} \int_0^T \int_{\Omega_i} j^{\prime}(u_i) (\nabla u_{i}^Th) \, dx \, ds +R_1(p_i,p_{i,m})+R_2(p_i,p_{i,m})+R_3(p_i,p_{i,m}). \nn 
\end{align*}
Finally, this can be rewritten as
\begin{align*}
&\displaystyle \lim_{\tau \rightarrow 0} \frac{1}{\tau} \, \langle \tilde{E}(u,\tau) -\tilde{E}(u,0),p\rangle_{\tilde{X}^\star,\tilde{X}}  \nn\\
=& \, \displaystyle \sum_{i \in \{+,-\}} \int_0^T \int_{\partial \Omega_i} \Bigl\{\frac{1}{\lambda_i}(1-2k_iu_{i})\ddot{u}_{i}p_{i}+\frac{1}{\varrho_i}\nabla u_{i} \cdot \nabla p_{i}  \nn  \\
&+b_i(1-\delta_i)\nabla \dot{u}_{i} \cdot \nabla p_{i} +b_i\delta_i |\nabla \dot{u}_{i}|^{q-1}\nabla \dot{u}_{i} \cdot \nabla p_{i}
-\frac{2k_i}{\lambda_i}(\dot{u}_{i})^2p_{i}\Bigr\}\, h^T n_i \, dx \, ds  \\
&-\displaystyle \sum_{i \in \{+,-\}}\int_0^T \int_{\partial \Omega_i}\Bigl\{\frac{1}{\varrho_i} \frac{\partial u_{i}}{\partial n_{i}} 
+b_i(1-\delta_i)\frac{\partial \dot{u}_{i}}{\partial n_{i}}
+b_i\delta_i|\nabla \dot{u}_{i}|^{q-1}\frac{\partial \dot{u}_{i}}{\partial n_{i}}\Bigr\} \, (\nabla p_{i}^Th) \, dx \, ds  \\
& - \displaystyle \sum_{i \in \{+,-\}} \int_0^T \int_{\partial \Omega_i}\Bigl\{\frac{1}{\varrho_i} \frac{\partial p_{i}}{\partial n_{i}}-b_i(1-\delta_i)\frac{\partial \dot{p}_{i}}{\partial n_{i}}
-b_i\delta_i \,(G_{u_i}(\nabla p_i) \cdot n_i)^{\Lcdot} \Bigr\} \, (\nabla u_{i}^Th) \, dx \, ds \\&-\displaystyle \sum_{i \in \{+,-\}} \int_0^T \int_{\Omega_i} j^{\prime}(u_i) (\nabla u_{i}^Th) \, dx \, ds \nn \\
&+R_1(p_i,p_{i,m})+R_2(p_i,p_{i,m})+R_3(p_i,p_{i,m})+R_4(p_i,p_{i,m}), \nn 
\end{align*}
(the second sum is written in terms of $p_i$ plus the error $R_4$) with 
\begin{align*}
R_4(p_i,p_{i,m})=& \,\displaystyle \sum_{i \in \{+,-\}}\int_0^T \int_{\partial \Omega_i}\Bigl\{\frac{1}{\varrho_i} \frac{\partial u_{i}}{\partial n_{i}} 
+b_i(1-\delta_i)\frac{\partial \dot{u}_{i}}{\partial n_{i}}
+b_i\delta_i|\nabla \dot{u}_{i}|^{q-1}\frac{\partial \dot{u}_{i}}{\partial n_{i}}\Bigr\} \, \nabla (p_{i}-p_{i,m})^Th \, dx \, ds.
\end{align*}
Next, we want to show that $$R(p_i,p_{i,m}):=R_1(p_i,p_{i,m})+R_2(p_i,p_{i,m})+R_3(p_i,p_{i,m})+R_4(p_i,p_{i,m})$$ tends to zero as $m \rightarrow \infty$. We will focus here on the estimates for the boundary integrals. \\

\noindent \noindent Since $p_i-p_{i,m}=0$ on $\Gamma$, we know that $\nabla_\Gamma (p_i-p_{i,m})=0$, where $\nabla_\Gamma$ denotes the tangential gradient. This further implies that 
\begin{align}                                                                                                                                                                                                                                                                                                                                                                                                                                                                                                                                                                                                                                                                                                                                                                                                                                                                                                                                                                                                                                                                                                                                                                                                                                                                  
&\nabla (p_i-p_{i,m})\vert_{\Gamma} \cdot h=\frac{\partial (p_i-p_{i,m})}{\partial n_i} (h \cdot n_i),\label{identity1}\\
&\nabla (p_i-p_{i,m})\vert_{\Gamma} \cdot \nabla u_i\vert_{\Gamma}=\frac{\partial (p_i-p_{i,m})}{\partial n_i} \frac{\partial u_i}{\partial n_i}.\label{identity2}
\end{align} 
\noindent Due to \eqref{identity1} and the fact that $h=0$ on $\partial \Omega_- \setminus \Gamma$, we can estimate
\begin{align*}
R_4(p_i,p_{i,m}) =& \, \displaystyle \sum_{i \in \{+,-\}}\int_0^T \int_{\partial \Omega_i}\Bigl\{\frac{1}{\varrho_i} \frac{\partial u_{i}}{\partial n_{i}} 
+b_i(1-\delta_i)\frac{\partial \dot{u}_{i}}{\partial n_{i}}\\
&+b_i\delta_i|\nabla \dot{u}_{i}|^{q-1}\frac{\partial \dot{u}_{i}}{\partial n_{i}}\Bigr\} \, \frac{\partial (p_{i}-p_{i,m})}{\partial n_i} h^Tn_i \, dx \, ds \\
\leq& \,|h|_{L^\infty(\Gamma)}\displaystyle \sum_{i \in \{+,-\}}\Bigl(\frac{1}{\varrho_i}\Bigl\|\frac{\partial u_{i}}{\partial n_{i}} \Bigr\|_{L^2(0,T;H^{1/2}(\Gamma))}+b_i(1-\delta_i)\Bigl\|\frac{\partial \dot{u}_{i}}{\partial n_{i}} \Bigr\|_{L^2(0,T;H^{1/2}(\Gamma))}\Bigr)\\
&+b_i\delta_i \|\nabla \dot{u}_i\|^{q-1}_{L^\infty(0,T;L^{\infty}(\Gamma))}\|\frac{\partial \dot{u}_{i}}{\partial n_{i}} \Bigr\|_{L^2(0,T;H^{1/2}(\Gamma))}\Bigl) \Bigl\|\frac{\partial (p_{i}-p_{i,m})}{\partial n_i} \Bigr\|_{L^2(0,T;H^{-1/2}(\Gamma))} \\
\leq& \,C|h|_{L^\infty(\Gamma)}\displaystyle \sum_{i \in \{+,-\}}\Bigl(\frac{1}{\varrho_i}\Bigl\|\frac{\partial u_{i}}{\partial n_{i}} \Bigr\|_{L^2(0,T;H^{1/2}(\Gamma))}+b_i(1-\delta_i)\Bigl\|\frac{\partial \dot{u}_{i}}{\partial n_{i}} \Bigr\|_{L^2(0,T;H^{1/2}(\Gamma))}\Bigr)\\
&+b_i\delta_i \|\nabla \dot{u}_i\|^{q-1}_{L^\infty(0,T;L^{\infty}(\Gamma))}\|\frac{\partial \dot{u}_{i}}{\partial n_{i}} \Bigr\|_{L^2(0,T;H^{1/2}(\Gamma))}\Bigl)\|p_{i}-p_{i,m}\|_{L^2(0,T;H^{1}(\Omega_i))} \\
\rightarrow& \, 0, \quad \text{as} \ m \rightarrow \infty.
\end{align*}
Here we have made use of the fact that since $u_i \in H^1(0,T;H^2(\Omega_i))$, we have $\frac{\partial u_i}{\partial n_i} \in H^1(0,T;H^{1/2}(\partial \Omega_i))$ (provided that $\Omega_i$ has a $C^{1,1}$ boundary, which we have assumed). Similarly, by employing \eqref{identity2}, we obtain
\begin{align*}
R_3(p_i,p_{i,m}) &=-\displaystyle \sum_{i \in \{+,-\}} \int_0^T \int_{\partial \Omega_i} \Bigl\{\frac{1}{\lambda_i}(1-2k_iu_{i})\ddot{u}_{i}(p_i-p_{i,m})+\frac{1}{\varrho_i}\frac{\partial u_{i}}{\partial n_{i}} \frac{\partial (p_{i}-p_{i,m})}{\partial n_i}   \nn  \\
&+b_i(1-\delta_i) \frac{\partial \dot{u}_{i}}{\partial n_{i}}  \frac{\partial (p_{i}-p_{i,m})}{\partial n_i}  +b_i\delta_i |\nabla \dot{u}_{i}|^{q-1}\frac{\partial \dot{u}_{i}}{\partial n_{i}} \frac{\partial (p_{i}-p_{i,m})}{\partial n_i} \\
&-\frac{2k_i}{\lambda_i}(\dot{u}_{i})^2(p_{i}-p_{i,m})\Bigr\}\, h^T n_i \, dx \, ds \\
\leq& |h|_{L^\infty(\Gamma)} \,\displaystyle \sum_{i \in \{+,-\}}\Bigl\{\frac{1}{\lambda_i}\|\ddot{u}_i\|_{L^2(0,T;L^2(\Gamma))}\|1-2k_iu_i\|_{L^\infty(0,T;L^{\infty}(\Gamma))}\|p_i-p_{i,m}\|_{L^2(0,T;L^2(\Gamma))}\\
&+\Bigl( \frac{1}{\varrho_i}\Bigl\|\frac{\partial u_{i}}{\partial n_{i}} \Bigr\|_{L^2(0,T;H^{1/2}(\Gamma))}+b_i(1-\delta_i)\Bigl\|\frac{\partial \dot{u}_{i}}{\partial n_{i}} \Bigr\|_{L^2(0,T;H^{1/2}(\Gamma))}\\
&+b_i\delta_i \|\nabla \dot{u}_i\|^{q-1}_{L^\infty(0,T;L^{\infty}(\Gamma))}\|\frac{\partial \dot{u}_{i}}{\partial n_{i}} \Bigr\|_{L^2(0,T;H^{1/2}(\Gamma))}\Bigr)\|p_{i}-p_{i,m}\|_{L^2(0,T;H^{1}(\Omega_i))}\\
&+\frac{2k_i}{\lambda_i}\|\dot{u}_i\|_{L^\infty(0,T;L^\infty(\Gamma))}\|\dot{u}_i\|_{L^2(0,T;L^2(\Gamma))}\|p_i-p_{i,m}\|_{L^2(0,T;L^2(\Gamma))} \Bigr\} \\
\rightarrow& \, 0, \quad \text{as} \ m \rightarrow \infty.
\end{align*}
Altogether, this means that
\begin{align*}
&\displaystyle \lim_{\tau \rightarrow 0} \frac{1}{\tau} \, \langle \tilde{E}(u,\tau) -\tilde{E}(u,0),p\rangle_{\tilde{X}^\star,\tilde{X}}  \nn\\
=& \, \displaystyle \sum_{i \in \{+,-\}} \int_0^T \int_{\partial \Omega_i} \Bigl\{\frac{1}{\lambda_i}(1-2k_iu_{i})\ddot{u}_{i}p_{i}+\frac{1}{\varrho_i}\nabla u_{i} \cdot \nabla p_{i}  \nn  \\
&+b_i(1-\delta_i)\nabla \dot{u}_{i} \cdot \nabla p_{i} +b_i\delta_i |\nabla \dot{u}_{i}|^{q-1}\nabla \dot{u}_{i} \cdot \nabla p_{i}
-\frac{2k_i}{\lambda_i}(\dot{u}_{i})^2p_{i}\Bigr\}\, h^T n_i \, dx \, ds  \\
&-\displaystyle \sum_{i \in \{+,-\}}\int_0^T \int_{\partial \Omega_i}\Bigl\{\frac{1}{\varrho_i} \frac{\partial u_{i}}{\partial n_{i}} 
+b_i(1-\delta_i)\frac{\partial \dot{u}_{i}}{\partial n_{i}}
+b_i\delta_i|\nabla \dot{u}_{i}|^{q-1}\frac{\partial \dot{u}_{i}}{\partial n_{i}}\Bigr\} \, (\nabla p_{i}^Th) \, dx \, ds  \\
& - \displaystyle \sum_{i \in \{+,-\}} \int_0^T \int_{\partial \Omega_i}\Bigl\{\frac{1}{\varrho_i} \frac{\partial p_{i}}{\partial n_{i}}-b_i(1-\delta_i)\frac{\partial \dot{p}_{i}}{\partial n_{i}}
-b_i\delta_i \,(G_{u_i}(\nabla p_i) \cdot n_i)^{\Lcdot} \Bigr\} \, (\nabla u_{i}^Th) \, dx \, ds \\&-\displaystyle \sum_{i \in \{+,-\}} \int_0^T \int_{\Omega_i} j^{\prime}(u_i) (\nabla u_{i}^Th) \, dx \, ds. \nn
\end{align*}
\noindent Assume that $u_d \in L^2(0,T;H^1_0(\Omega))$. We can utilize the Stokes theorem and the fact that $h=0$ on $\partial \Omega_- \setminus \Gamma$ to acquire
\begin{align*}
&\displaystyle \sum_{i \in \{+,-\}}  \int_0^T \int_{\Omega_i}j(u_i) \, \div h \, dx \, ds+ \displaystyle \sum_{i \in \{+,-\}} \int_0^T \int_{\Omega_i}j^\prime (u_i) (\nabla u_i^T h) \, dx \, ds \\
=&\,  \displaystyle \sum_{i \in \{+,-\}}\int_0^T \int_{\Gamma} j(u_i) \, h^T n_i \, dx \, ds,
\end{align*}
which leads to the shape derivative given in terms of the boundary integrals
\begin{align*}
dJ(u,\Omega_+)h
=& \,- \int_0^T \int_{\Gamma} \Bigl \llbracket \frac{1}{\lambda}(1-2ku)\ddot{u}p+\frac{1}{\varrho}\nabla u \cdot \nabla p+b(1-\delta)\nabla \dot{u} \cdot \nabla p \\
&+b\delta |\nabla \dot{u}|^{q-1}\nabla \dot{u} \cdot \nabla p-\frac{2k}{\lambda}(\dot{u})^2p\Bigr \rrbracket \, h^Tn_+ \, dx \, ds \\
&+\int_0^T \int_{\Gamma} \Bigl \llbracket (\frac{1}{\varrho}\nabla u+b(1-\delta)\nabla \dot{u}+b\delta |\nabla \dot{u}|^{q-1}\nabla \dot{u})\cdot n_+ (\nabla p \cdot h) \Bigr \rrbracket \, dx \, ds \\
&+\int_0^T \int_{\Gamma} \Bigl \llbracket \Bigl(\frac{1}{\varrho}\nabla p-b(1-\delta)\nabla \dot{p}-b\delta(G_u(\nabla p))^{\Lcdot} \Bigr)\cdot n_+ (\nabla u \cdot h)\Bigr \rrbracket \, dx \, ds.
\end{align*}
\noindent Here we have used that $\llbracket (u-u_d)^2 \rrbracket=0$. The expression for the shape derivative can be slightly simplified. For the second and third integral on the right hand side, by employing the fact that (see Example 2, \cite{IKP08})
\begin{align*}
\llbracket x \rrbracket=\llbracket y \rrbracket=0 \implies \llbracket xy \rrbracket=0,
\end{align*} 
we obtain the following identities
\begin{align*}
&\Bigl \llbracket(\frac{1}{\varrho}\nabla u+b(1-\delta)\nabla \dot{u}+b\delta |\nabla \dot{u}|^{q-1}\nabla \dot{u})\cdot n_+ (\nabla p \cdot h) \Bigr \rrbracket  \\
=& \,\Bigl \llbracket(\frac{1}{\varrho}\frac{\partial u}{\partial n_+}\frac{\partial  p}{\partial n_+}+b(1-\delta)\frac{\partial  \dot{u}}{\partial n_+}\frac{\partial  p}{\partial n_+}+b\delta |\nabla \dot{u}|^{q-1}\frac{\partial  \dot{u}}{\partial n_+}\frac{\partial  p}{\partial n_+})(h \cdot n_+) \\
&+\Bigl(\frac{1}{\varrho}\nabla u+b(1-\delta)\nabla \dot{u}+b\delta |\nabla \dot{u}|^{q-1}\nabla \dot{u}\Bigr)\cdot n_+\nabla_{\Gamma}p \cdot h\Bigr \rrbracket\\
=& \,\Bigl \llbracket \frac{1}{\varrho}\frac{\partial  u}{\partial n_+}\frac{\partial  p}{\partial n_+}+b(1-\delta)\frac{\partial  \dot{u}}{\partial n_+}\frac{\partial  p}{\partial n_+}+b\delta |\nabla \dot{u}|^{q-1}\frac{\partial  \dot{u}}{\partial n_+}\frac{\partial  p}{\partial n_+}\Bigr \rrbracket\,(h \cdot n_+).
\end{align*}
and similarly
\begin{align*}
&\int_0^T \int_{\Gamma}\Bigl \llbracket \Bigl(\frac{1}{\varrho}\nabla p-b(1-\delta)\nabla \dot{p}-b\delta(G_u(\nabla p))^{\Lcdot} \Bigr)\cdot n_+ (\nabla u \cdot h)\Bigr \rrbracket\, dx \, ds\\
=&\,\int_0^T \int_{\Gamma} \Bigl \llbracket \frac{1}{\varrho}\frac{\partial  u}{\partial n_+}\frac{\partial p}{\partial n_+}+b(1-\delta)\frac{\partial  \dot{u}}{\partial n_+}\frac{\partial p}{\partial n_+}+b\delta \,|\nabla \dot{u}|^{q-1}\frac{\partial \dot{u}}{\partial n_+}\frac{\partial p}{\partial n_+} \\
&+b\delta (q-1)|\nabla \dot{u}|^{q-3} (\nabla \dot{u} \cdot \nabla p)\Bigl|\frac{\partial  \dot{u}}{\partial n_+}\Bigr|^2\Bigr \rrbracket \,(h \cdot n_+) \, dx \, ds. 
\end{align*}
Here we have made use of the fact that $\llbracket \nabla_{\Gamma} u \rrbracket=\llbracket \nabla_{\Gamma} p \rrbracket=0$. We finally obtain
\begin{theorem}(Strong shape derivative) 
 Let $\partial \Omega$ and $\Gamma=\partial \Omega_+$ be $C^{1,1}$, $u_0\vert_{\Omega_i} \in H^2(\Omega_i)$, $u_0,u_1 \in W_0^{1,q+1}(\Omega)$, $q >2$, and let assumptions \eqref{coeff_3} on the coefficients in the state equation and hypotheses \text{\normalfont ($\mathcal{H}_1$)-($\mathcal{H}_5$)} hold true. Assume that $u_d \in L^2(0,T;H^1_0(\Omega))$. The shape derivative of $J$ at $\Omega_+$ in the direction of a vector field $h \in C^{1,1}(\bar{\Omega},\mathbb{R}^d)$ is given by
\begin{equation} \label{strong_shaped_long}
\begin{aligned}
dJ(u,\Omega_+)h
=& \, \int_0^T \int_{\Gamma} \Bigl \llbracket-\frac{1}{\lambda}(1-2ku)\ddot{u}p-\frac{1}{\varrho}\nabla u \cdot \nabla p\\
&-b((1-\delta)+\delta |\nabla \dot{u}|^{q-1})\nabla \dot{u} \cdot \nabla p+\frac{2k}{\lambda}(\dot{u})^2p\\
&+\frac{2}{\varrho}\frac{\partial  u}{\partial n_+}\frac{\partial p}{\partial n_+}+2b((1-\delta)+\delta \,|\nabla \dot{u}|^{q-1})\frac{\partial  \dot{u}}{\partial n_+}\frac{\partial p}{\partial n_+} \\
&+b\delta (q-1)|\nabla \dot{u}|^{q-3} (\nabla \dot{u} \cdot \nabla p)\Bigl|\frac{\partial  \dot{u}}{\partial n_+}\Bigr|^2\Bigr \rrbracket\, h^Tn_+ \, dx \, ds.
\end{aligned} 
\end{equation}
\end{theorem}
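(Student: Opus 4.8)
The plan is to start from the volume (weak) representation of the Eulerian derivative already isolated in \eqref{aa_shapecomp_diff} and to convert its last two lines into boundary integrals over $\Gamma$ by applying Green's theorem separately on each subdomain $\Omega_i$. The enabling ingredient is Theorem \ref{thm:higher_reg}: under the $C^{1,1}$ regularity of $\partial\Omega$ and $\Gamma$, the assumption $u_0\vert_{\Omega_i}\in H^2(\Omega_i)$, and hypothesis $(\mathcal{H}_2)$ (which keeps $\|\nabla\dot u\|_{L^\infty(0,T;L^\infty(\Omega))}$ finite and small), one gets $u_i\in H^1(0,T;H^2(\Omega_i))$. This piecewise $H^2$-regularity of the primal state is what legitimizes the integration by parts in the $u$-terms.

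The obstacle is that the adjoint $p$ is only $H^1$ in space on each $\Omega_i$, so Green's theorem cannot be applied to the $p$-terms directly. I would therefore introduce, for each fixed $i$, a sequence $\{p_{i,m}\}\subset H^1(0,T;C^{\infty}(\Omega_i))$ converging to $p_i$ in $H^1(0,T;H^1(\Omega_i))$ with $p_{i,m}=p_i$ on $\partial\Omega_i$, rewrite \eqref{aa_shapecomp_diff} with $p_{i,m}$ in place of $p_i$ up to an error $R_1$, and integrate the smooth approximation by parts. The $Dh$-terms then cancel, leaving boundary integrals on $\partial\Omega_i$ together with volume integrals carrying the directional derivatives $\nabla u_i^{T}h$ and $\nabla p_{i,m}^{T}h$.

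Next I would reinterpret those volume integrals through the two-domain weak forms of the state \eqref{ModWest} and the adjoint \eqref{adjoint_weakform}: since $\nabla u_i^{T}h$ and $\nabla p_{i,m}^{T}h$ need not be continuous across $\Gamma$, one uses the subdomain weak formulations, which produce the conormal fluxes of $u$ and $p$ on $\partial\Omega_i$ and the source $j'(u_i)(\nabla u_i^{T}h)$; passing back from $p_{i,m}$ to $p_i$ generates the further errors $R_2,R_3,R_4$. The heart of the argument is to show $R:=R_1+R_2+R_3+R_4\to0$ as $m\to\infty$. The volume errors $R_1,R_2$ vanish by the $H^1$-convergence $p_{i,m}\to p_i$ and the bounds from $(\mathcal{H}_1)$, while the boundary errors $R_3,R_4$ are the genuinely delicate part: they require $(\mathcal{H}_4)$, $(\mathcal{H}_5)$ and the $H^{1/2}$-regularity of $\partial u_i/\partial n_i$ (from $u_i\in H^1(0,T;H^2(\Omega_i))$ and the $C^{1,1}$ boundary), estimated via the $H^{1/2}$–$H^{-1/2}$ duality on $\Gamma$ and the trace bound $\|\partial(p_i-p_{i,m})/\partial n_i\|_{H^{-1/2}(\Gamma)}\le C\|p_i-p_{i,m}\|_{H^{1}(\Omega_i)}\to0$.

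Finally I would combine the leftover volume term $j(u)\,\div h$ from the transported cost functional with the boundary source $j'(u_i)(\nabla u_i^{T}h)$ using the divergence theorem and $h\vert_{\partial\Omega_-\setminus\Gamma}=0$, collapsing them to $\int_\Gamma\llbracket j(u)\rrbracket\,h^{T}n_+\,dx\,ds$, which vanishes since $\llbracket(u-u_d)^2\rrbracket=0$ by continuity of $u$ and $u_d$ across $\Gamma$. To reach the simplified form \eqref{strong_shaped_long} I would split each gradient on $\Gamma$ into normal and tangential parts and use the jump product rule $\llbracket x\rrbracket=\llbracket y\rrbracket=0\Rightarrow\llbracket xy\rrbracket=0$ together with $\llbracket\nabla_\Gamma u\rrbracket=\llbracket\nabla_\Gamma p\rrbracket=0$ to kill all tangential contributions, leaving only the normal-derivative products carried by $h\cdot n_+$; the $q$-Laplace damping contributes the extra factor $(q-1)|\nabla\dot u|^{q-3}(\nabla\dot u\cdot\nabla p)|\partial\dot u/\partial n_+|^2$ through the linearized operator $G_u$, which is well-defined for $q>2$. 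The main difficulty throughout is precisely the adjoint's mere $H^1$ spatial regularity: every integration by parts must be routed through the smooth approximant $p_{i,m}$, and the whole scheme closes only once the boundary error terms are shown to vanish, which is exactly where the trace hypotheses $(\mathcal{H}_4)$ and $(\mathcal{H}_5)$ are indispensable.
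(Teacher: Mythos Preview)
Your proposal is correct and follows essentially the same route as the paper: invoke Theorem \ref{thm:higher_reg} for piecewise $H^2$-regularity of $u$, approximate $p_i$ by $p_{i,m}\in H^1(0,T;C^\infty(\Omega_i))$ with preserved boundary values, integrate by parts on each $\Omega_i$ so the $Dh$-terms cancel, feed the directional derivatives into the two-domain weak forms, and then drive the four error terms $R_1$--$R_4$ to zero, the boundary ones via the $H^{1/2}$--$H^{-1/2}$ duality combined with the fact that $p_{i,m}=p_i$ on $\partial\Omega_i$ forces $\nabla(p_i-p_{i,m})\vert_\Gamma$ to be purely normal. The closing simplification through $\llbracket\nabla_\Gamma u\rrbracket=\llbracket\nabla_\Gamma p\rrbracket=0$ and the jump product rule is also exactly how the paper arrives at \eqref{strong_shaped_long}.
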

The boundary integrals in \eqref{strong_shaped_long} are well-defined thanks to hypotheses $(\mathcal{H}_4)$ and $(\mathcal{H}_5)$.
\section{Conclusion}
We have computed, through a variational approach, the weak and the strong shape derivative for the cost functional determining the acoustic pressure of high intensity ultrasound when focusing is performed by an acoustic lens. \\
\indent Future research will be directed at developing and implementing a suitable gradient based optimization algorithm, as well as considering a physically more involved elastic model for the focusing lens and thus a shape optimization problem with an elastic-acoutic coupling as the optimization constraint.
\subsection*{Acknowledgments} The financial support by the FWF (Austrian Science Fund) under grant P24970 is gratefully acknowledged as well as the support of the Karl Popper Kolleg "Modeling-Simulation-Optimization", which is funded by the Alpen-Adria-Universit\" at Klagenfurt and by the Carinthian Economic Promotion Fund (KWF).

\end{document}